\def\calf{{\cal F}}
\def\<{\langle}
\def\>{\rangle}
\def\eps{\varepsilon}
\def\RR{\mathbb{R}}
\def\CC{\mathbb{C}}
\newcommand\const{\operatorname{const}}
\newcommand\tr{\operatorname{Tr}}
\newcommand\Div{\operatorname{div}}
\newcommand\id{\operatorname{id}}
\newcommand\im{\operatorname{im}}
\def\eq{\hspace*{-1.5mm}&=&\hspace*{-1.5mm}}
\def\plus{\hspace*{-1.5mm}&+&\hspace*{-1.5mm}}
\newcommand{\gv}{\mathrm{gv}}
\newtheorem{corollary}{Corollary}
\newtheorem{definition}{Definition}
\newtheorem{example}{Example}
\newtheorem{remark}{Remark}
\newtheorem{lemma}{Lemma}
\newtheorem{proposition}{Proposition}
\newtheorem{theorem}{Theorem}
\author{Vladimir Rovenski\footnote{Department of Mathematics, University of Haifa,
        e-mail: {\tt vrovenski@univ.haifa.ac.il}        }
        \ and \
        Pawe\l \  Walczak\footnote{Katedra Geometrii,
        Uniwersytet \L\'{o}dzki,
e-mail: {\tt pawel.walczak@wmii.uni.lodz.pl}
}
}
\title{Variations of the Godbillon--Vey invariant\\ of transversely parallelizable foliations}
\begin{document}

\date{}

\maketitle

\begin{abstract}
We consider a $(2q+1)$-dimensional smooth manifold $M$ equipped with a $(q+1)$-dimensional,
\textit{a~priori} non-integrable, distribution ${\cal D}$ and a $q$-vector field ${\bf T}=T_1\wedge\ldots\wedge T_q$, where $\{T_i\}$
are linearly independent vector fields transverse to~${\cal D}$. Using a $q$-form $\omega$ such that ${\cal D} = \ker\,\omega$
and $\omega({\bf T})=1$, we construct a $(2q+1)$-form analogous to that defining the Godbillon--Vey class of a $(q+1)$-dimensional foliation,
and show how does this form depend on $\omega$ and~${\bf T}$.
For a compatible Riemannian metric $g$ on~$M$, we express this $(2q+1)$-form in terms of ${\bf T}$ and extrinsic geometry of~${\cal D}$ and normal distribution ${\cal D}^\bot$.
We find Euler-Lagrange equations of associated functionals: for variable $(\omega,{\bf T})$ on $(M,g)$,
and for variable metric on $(M,{\cal D})$, when distributions/foliations and forms are defined outside~a~``singularity~set"
under additional assumption of convergence of certain integrals.
We show that for a harmonic distribution ${\cal D}^\bot$ such $(\omega,{\bf T})$ is critical,
characterize critical pairs when ${\cal D}$ is integrable and find sufficient conditions for critical pairs when variations are among foliations, calculate the index form and consider examples of critical foliations among twisted products, Reeb foliations and transversely holomorphic flows.

\vskip1.5mm\noindent
\textbf{Keywords}:
foliation, Godbillon--Vey invariant, variation, singularity,
mean curvature, transversely holomorphic flow, twisted product

\vskip1.5mm\noindent
\textbf{Mathematics Subject Classifications (2010)} Primary 53C12; Secondary 53C21
\end{abstract}

\section*{Introduction}

The \textit{Godbillon--Vey cohomology class} $\gv(\calf)$,
which occurs in algebraic topology, differential geometry and their applications~\cite{wpah}, 
was defined first for codimension-one foliations as a 3-cohomology class.
It proved to be one of the most interesting characteristic classes associated to a foliated manifold.
Its non-vanishing tells a lot about the dynamics of the foliation, e.g., implies the existence of resilient leaves.
The Godbillon--Vey class has been the subject of numerous publications by most prominent topologists interested in the foliation theory.
It is known that the Godbillon-Vey invariant is non-rigid,
and in \cite{rw-gv1} we studied the Godbillon-Vey invariant from the point of view of the variational calculus.
Then $\gv(\calf)$ was extended for foliations of codimension $q>1$, \cite{cc,gv},
and the paper generalizes the variational approach for such foliations.

If a codimension $q>1$ transversely oriented foliation $\calf$ of a closed manifold~$M$ is defined by the equation $\omega=0$
for some nowhere zero $q$-form $\omega$, then $\gv(\calf)$ is the de Rham cohomology class of the closed $(2q+1)$-form
$\eta\wedge (d\eta)^q$, where $\eta$ is a one-form obeying $d\omega =\omega\wedge\eta$.
The integrability condition for the tangent distribution
  $T\calf=\ker\omega:=\{X\in TM:\,\omega(X,\,\ldots)=0\}$,
implies the existence of such $\eta$, while $\gv(\calf)$ does not depend on the choice of $\omega$ and~$\eta$.
The~Godbillon--Vey class measures some sort of ``twisting" of the leaves, and it plays a crucial role in
topology and dynamics of foliations,
 see e.g. \cite{cc,glw,hu2002,tamura} and \cite[Problem~10]{hu2005}.
The~complex Godbillon--Vey class, defined for transversely holomorphic foliations of real codimension $2q$, is often referred as the \textit{Bott class}.
 If a smooth map $f:\bar M\to M$ is transverse to $\calf$ on $M$ then $\gv(f^*\calf)=f^*\gv(\calf)$, thus concordant foliations have the same Godbillon--Vey classes.
 When $\dim M = 2q+1$ we get a Godbillon--Vey number:
\begin{equation}\label{E-gv-invar0}
 \gv(\calf)=\int_M \eta\wedge (d\eta)^q.
\end{equation}
There exists one parameter family of foliations on $S^3$ with the Godbillon--Vey number taking all values in an interval
(for the particular Reeb foliation this number is zero), hence $\gv(\calf)$ is not a homotopy invariant.
As in the codimension one case, the Godbillon--Vey number \eqref{E-gv-invar0} is nonzero for various examples, and can even take on a continuum of values.
Variations of \eqref{E-gv-invar0} under deformations of $\calf$ have been studied in~\cite{as2015}.
 Our variational approach differs from one mentioned above. In our earlier work
\cite{rw-gv1} we defined a \textit{Godbillon--Vey type invariant} for a pair consisting of an arbitrary, \textit{a priori} non-integrable, plane field $\cal D$ and a transverse to it vector field $T$
on a Riemannian manifold $(M^3, g)$,
studied its dependence  on $\cal D$, $T$ and $g$,
found derivatives of the functional, characterized critical 2-dimensional foliations for different types of variations,
proved sufficient conditions for critical pairs when ${\cal D}$ varies over integrable plane fields (foliations),
found the index form of our variation problem, provided examples with Roussarie and Reeb foliations and twisted products.

Non-integrable distributions (subbundles of the tangent bundle $TM$) appear in many situations,
e.g. on contact manifolds and in sub-Riemannian geometry.
 A codimension $q$ distribution ${\cal D}$ can be defined by a \textit{locally decomposable $q$-form}
$\omega$,
that is $\omega=\omega_1\wedge\ldots\wedge\omega_q$ for some one-forms $\omega_i$ given in a neighborhood of a point $x\in M$.
Indeed, let $g$ be any Riemannian metric on $M$ and $T_i\ (1\le i\le q)$ a local basis of the distribution ${\cal D}^\bot$ normal to ${\cal D}$.  Then locally ${\cal D}=\ker\omega$, where $\omega=T_1^\flat\wedge\ldots\wedge T_q^\flat$.
The~``musical" isomorphisms $\sharp$ and $\flat$ ``lower'' and ``raise'' indices of rank one tensors on $(M,g)$.

A distribution is \textit{framed} if its ``normal bundle" $TM/{\cal D}$ is endowed with a trivialization.
In this paper, we consider a manifold $M^{2q+1}$ equipped with a $(q+1)$-dimensional distribution ${\cal D}$
and linearly independent vector fields $T_i\ (i\le q)$ transverse to ${\cal D}$.
Hence, ${\cal D}^\pitchfork={\rm span}(T_1,\ldots, T_q)$ is a smooth distribution
isomorphic to $TM/{\cal D}$.
Our~framed distribution ${\cal D}$ can be represented by a \textit{decomposable $q$-form} $\omega=\omega_1\wedge\ldots \wedge\omega_q$,
where $\omega_i$ are (not uniquely defined) one-forms.
Indeed, there exists a \textit{compatible Riemannian metric} $g=\<\,\cdot\,,\cdot\,\>$ on $M$,
i.e., the above vector fields $T_i\ (i\le q)$ are orthonormal and all are orthogonal to ${\cal D}$.
Denote by ${\rm Riem}(M,{\cal D},{\bf T})$ the space of all such metrics.
Given compatible metric $g$, set $\omega_i=T_i^\flat$.
Denote by ${\bf T}=T_1\wedge \ldots \wedge T_q$ a multivector on~$M$.

Operation $\iota_{\,\bf T}$ is defined on a differential $r$-form $\alpha$ with $r\ge q$~by
\[
 \iota_{\,\bf T}\,\alpha:=\iota_{\,T_q}\ldots\iota_{\,T_1}\,\alpha=\alpha(T_1,\ldots, T_q,\cdot\,,\ldots\cdot).
\]
For a decomposable $q$-form $\omega$ representing ${\cal D}$, one may assume the following normalization:
\begin{equation}\label{E-omega-T}
 \iota_{\,\bf T}\,\omega = 1.
\end{equation}
In fact, a pair $(\omega, {\bf T})$ or $({\cal D}, {\bf T})$, where ${\cal D}$ is represented by a decomposable $q$-form $\omega$ satisfying (\ref{E-omega-T}), is the main geometric structure considered here.
We build a $1$-form $\eta$ depending on $(\omega, {\bf T})$,
\begin{equation}\label{E-eta}
 \eta = \iota_{\,\bf T}\,d\omega,
\end{equation}
which is analogous to that defined in \cite{rw-gv1} for $q=1$, and study the functional
\begin{equation}\label{E-gv-invar}
 \gv: ( \omega, {\bf T}) \mapsto \int_M \eta\wedge (d\eta)^q.
\end{equation}
In~a~sense, our one-form $\eta$ arises from the best approximation of $(q+1)$-form $d\omega$ by the wedge-product of $\omega$ by a one-form, see Section~\ref{sec:1-1}.
We~provide variational formulas related to our construction and deduce Euler-Lagrange equations of \eqref{E-gv-invar} for variable pair $(\omega,{\bf T})$.

If $M$ is open, one may integrate over a relatively compact domain $G$ of $M$,
containing supports of variations of $(\omega,\,{\bf T})$ or a Riemannian metric.
Following ideas of \cite{br-w,pw1}, we consider singular foliations, distributions and forms,
that is those defined outside a ``singularity set" $\Sigma$,
\[
 \Sigma =
 \{\mbox{a~finite union of pairwise disjoint closed submanifolds of codim. $\ge k$}\}
\]
under an
assumption of convergence of improper integrals $\int_M \|\beta\|^p\,{\rm d}V_g$ for suitable $(\dim M-1)$-forms~$\beta$ defined on $M\smallsetminus\Sigma$ and some $p$ satisfying $(k-1)(p-1)\ge1$.

\smallskip

The~fundamental \textbf{question}~is:
  \textit{What are the best almost product structures on a manifold}?
Such pairs $(\omega,{\bf T})$ (of the above question) are proposed to be among critical points of \eqref{E-gv-invar}.
 We~show that $(\omega,{\bf T})$ is critical when the distribution ${\cal D}^\bot$ is harmonic (with respect to compatible metric),
characterize critical pairs when ${\cal D}$ is integrable
and find sufficient conditions for critical pairs when variations are among foliations,
calculate the index form and consider examples of critical foliations
among twisted products, Reeb foliations and transversely holomorphic flows.
We hope that the theory presented  here can be used to
deepen our knowledge of foliations as well as of topology of manifolds.

\section{Construction}
\label{sec:1-1}


The Lie derivative of differential forms along vector fields can be generalized to a Lie derivative of
differential forms along multivector fields, defined as graded commutator between the exterior derivative $d$ and the respective contraction operator: for a multivector ${\bf X}
= X_1\wedge\ldots X_r$ on $M$, see \cite{emr-1998,fo-2003},
\[
 {\cal L}_{\,\bf X} :=
  d\,\iota_{\,\bf X} -(-1)^r \iota_{\,\bf X}\,d ,
\]
where $\iota_{\,\bf X}\,\alpha := \alpha(X_1, \ldots, X_r, \ldots)$.
This leads to the relation
 $d{\cal L}_{\bf X} = (-1)^{r} {\cal L}_{\bf X}\,d$.

\begin{lemma}\label{L-iota-ab}
 Let ${\bf T}$ be a $q$-vector on $M$ and $\alpha,\beta$ differential forms.
 If $\deg\alpha+\deg\beta>\dim M+q-1$~then
\[
 \iota_{\,\bf T}\,\alpha\wedge\beta=(-1)^{q(\deg\alpha-1)}\alpha\wedge\iota_{\,\bf T}\,\beta.
\]
\end{lemma}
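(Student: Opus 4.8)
The plan is to reduce the statement to the single-vector antiderivation (Leibniz) rule
\[
 \iota_{X}(\alpha\wedge\beta)=(\iota_{X}\alpha)\wedge\beta+(-1)^{\deg\alpha}\,\alpha\wedge(\iota_{X}\beta),
\]
combined with the elementary fact that every form of degree exceeding $n:=\dim M$ vanishes, and to argue by induction on $q$. Throughout write $a=\deg\alpha$ and $b=\deg\beta$, so the hypothesis reads $a+b>n+q-1$. Since $\iota_{\bf T}$ lowers degree by $q$, both sides are $(a+b-q)$-forms; if $a+b>n+q$ they vanish for degree reasons and nothing is to prove, so the genuine content is the boundary case $a+b=n+q$, where both sides are top-degree $n$-forms.

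For the base case $q=1$, with ${\bf T}=T_1=:X$, the hypothesis $a+b>n$ already forces $\alpha\wedge\beta=0$, whence $\iota_X(\alpha\wedge\beta)=0$. Substituting this into the antiderivation rule gives $(\iota_X\alpha)\wedge\beta=-(-1)^{a}\alpha\wedge(\iota_X\beta)=(-1)^{a-1}\alpha\wedge(\iota_X\beta)$, which is exactly the claim for $q=1$.

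For the inductive step I would write $\iota_{\bf T}=\iota_{T_q}\circ\iota_{{\bf T}'}$ with ${\bf T}'=T_1\wedge\dots\wedge T_{q-1}$, and push the two contraction blocks across the wedge one after the other. First, applying the already-established case $q=1$ to the forms $\iota_{{\bf T}'}\alpha$ (of degree $a-q+1$) and $\beta$, whose degrees sum to $a+b-q+1>n$ --- precisely the inequality needed --- moves $\iota_{T_q}$ onto $\beta$ with a sign $(-1)^{a-q}$. Next, the inductive hypothesis for the $(q-1)$-vector ${\bf T}'$, applied to $\alpha$ and $\iota_{T_q}\beta$ (degree sum $a+b-1>n+q-2$), moves $\iota_{{\bf T}'}$ onto $\iota_{T_q}\beta$ with a sign $(-1)^{(q-1)(a-1)}$. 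Finally, the anticommutativity $\iota_{X}\iota_{Y}=-\iota_{Y}\iota_{X}$ of contractions yields $\iota_{{\bf T}'}\iota_{T_q}=(-1)^{q-1}\iota_{\bf T}$, contributing one more factor $(-1)^{q-1}$.

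The one delicate point I expect is the sign bookkeeping: adding the three exponents gives $(a-q)+(q-1)(a-1)+(q-1)=q(a-1)$, which is exactly the asserted factor $(-1)^{q(\deg\alpha-1)}$. The parallel check --- that the single inequality $a+b>n+q-1$ is simultaneously what licenses the $q=1$ transfer (it becomes $a+b-q+1>n$) and the $(q-1)$ transfer (it becomes $a+b-1>n+q-2$) --- is what makes the induction close, and I regard getting these two reductions to match as the heart of the argument; no geometry beyond the contraction calculus enters.
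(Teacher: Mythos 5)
Your proof is correct and follows essentially the same route as the paper's: the base case $q=1$ from the antiderivation rule together with vanishing of forms of degree exceeding $\dim M$, then induction splitting off $\iota_{T_q}$, with the same three sign contributions $(-1)^{a-q}$, $(-1)^{(q-1)(a-1)}$ and $(-1)^{q-1}$ adding up to $q(a-1)$, and the same verification that the single hypothesis $a+b>\dim M+q-1$ licenses both reduction steps. The paper performs the final anticommutation of contractions inline rather than as a separate step, but this is purely cosmetic.
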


\begin{proof} For $q=1$ we have $\alpha\wedge\beta=0$ when $\deg\alpha+\deg\beta>\dim M$. Thus, $0=\iota_{\,T}\,(\alpha\wedge\beta)=\iota_{\,T}\,\alpha\wedge\beta+(-1)^{\deg\alpha}\alpha\wedge\iota_{\,T}\,\beta$.
Then setting  $\widetilde{\bf T}={\bf T}\wedge T_q$ and using  induction we get
\begin{eqnarray*}
 &&\hskip-5mm \iota_{\,\widetilde{\bf T}}\,\alpha\wedge\beta
 = \iota_{\,T_q}\,\iota_{\,\bf T}\,\alpha\wedge\beta
 = (-1)^{\deg\alpha-q}\,\iota_{\,\bf T}\,\alpha\wedge\iota_{\,T_q}\,\beta\\
 &&  \overset{\rm ind}= (-1)^{\deg\alpha-q}\cdot (-1)^{(q-1)(\deg\alpha-1)}\alpha\wedge\iota_{\,\bf T}\,\iota_{\,T_q}\,\beta\\
 &&  = (-1)^{q(\deg\alpha-1)}\alpha\wedge\iota_{\,T_q}\,\iota_{\,\bf T}\,\beta
 = (-1)^{q(\deg\alpha-1)}\alpha\wedge\iota_{\,\widetilde{\bf T}}\,\beta,
\end{eqnarray*}
proving the claim.
\end{proof}

Given $g\in{\rm Riem}(M,{\cal D},{\bf T})$, consider in the space $\Lambda^1(M)$ of one-forms on $M$ the subspace
 $\omega^\perp=\{\theta\in\Lambda^1(M):\ \theta(T_i)=0,\ 1\le i\le q\}$.
Consider also, in the space $\Lambda^{q+1}(M)$ of $(q+1)$-forms on $M$ the subspace
$(\RR\cdot\omega)\wedge\omega^\perp$
of all $(q+1)$-forms $\omega\wedge\theta$, $\theta$ being a one-form of $\omega^\perp$.
 Now, project $d\omega$ orthogonally onto the subspace $(\RR\cdot\omega)\wedge\omega^\perp$.
The~projection $(d\omega)^\bot$ has the form $\omega\wedge\eta$ with $\eta$ belonging to $\omega^\perp$. Such $\eta$ is unique.

\begin{proposition}\label{L-eta}
 The one-form $\eta$ does not depend on a compatible metric $g$, and is given by \eqref{E-eta}, or equivalently, $\eta = (-1)^{q-1} {\cal L}_{\,\bf T}\,\omega$.
\end{proposition}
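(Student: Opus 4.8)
The plan is to establish three things: (i) that the orthogonal projection $(d\omega)^\bot$ onto $(\RR\cdot\omega)\wedge\omega^\perp$ has the stated form $\omega\wedge\eta$ with $\eta\in\omega^\perp$ uniquely determined, (ii) that this $\eta$ equals $\iota_{\,\bf T}\,d\omega$, and (iii) that $\iota_{\,\bf T}\,d\omega = (-1)^{q-1}{\cal L}_{\,\bf T}\,\omega$, which simultaneously exhibits the metric-independence since $\iota_{\,\bf T}\,d\omega$ is manifestly defined without reference to $g$. The last identity is the quickest: from the definition ${\cal L}_{\,\bf T} = d\,\iota_{\,\bf T} - (-1)^q\iota_{\,\bf T}\,d$ applied to the $q$-form $\omega$, the term $d\,\iota_{\,\bf T}\,\omega = d(1) = 0$ by the normalization \eqref{E-omega-T}, leaving ${\cal L}_{\,\bf T}\,\omega = -(-1)^q\iota_{\,\bf T}\,d\omega = (-1)^{q-1}\iota_{\,\bf T}\,d\omega$, hence $\eta = \iota_{\,\bf T}\,d\omega = (-1)^{q-1}{\cal L}_{\,\bf T}\,\omega$ once (ii) is known.

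The core of the argument is (i)–(ii). First I would verify that $\eta:=\iota_{\,\bf T}\,d\omega$ indeed lies in $\omega^\perp$, i.e.\ $\eta(T_i)=0$ for each $i$: this follows because $\iota_{T_i}\iota_{\,\bf T} = \iota_{T_i}\iota_{T_q}\cdots\iota_{T_1}$ contains a repeated contraction $\iota_{T_i}$, and contractions by the same vector anticommute to zero, so $(\iota_{\,\bf T}\,d\omega)(T_i)=\iota_{T_i}\iota_{\,\bf T}\,d\omega = 0$. Next I would confirm that $\omega\wedge\eta$ is the orthogonal projection of $d\omega$ onto the subspace $(\RR\cdot\omega)\wedge\omega^\perp$. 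The natural route is to show that $d\omega - \omega\wedge\eta$ is orthogonal, in the induced inner product on $\Lambda^{q+1}(M)$, to every element $\omega\wedge\theta$ with $\theta\in\omega^\perp$; equivalently, that contracting the residual $(q+1)$-form against $\bf T$ in the first $q$ slots recovers nothing more than what $\eta$ already captures. Working in an adapted orthonormal coframe $\{T_1^\flat,\dots,T_q^\flat\}\cup\{\theta^a\}$ with $\theta^a\in\omega^\perp$, so that $\omega = T_1^\flat\wedge\cdots\wedge T_q^\flat$, I would decompose $d\omega$ by the number of cotangent factors lying in $\omega^\perp$: the component with exactly one such factor is precisely $\pm\,\omega\wedge\eta$, and the contraction $\iota_{\,\bf T}$ annihilates every other component (those with zero or $\ge 2$ factors from $\omega^\perp$ contract to $0$ or leave a form not of the required shape). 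This identifies $\omega\wedge\eta$ with the projection and, combined with $\eta\in\omega^\perp$, gives uniqueness of $\eta$ since the map $\theta\mapsto\omega\wedge\theta$ is injective on $\omega^\perp$.

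The step I expect to be the main obstacle is the bookkeeping in (i)–(ii): one must be careful that the contraction $\iota_{\,\bf T}$ of $d\omega$ really isolates the single $\omega^\perp$-factor component and that the inner-product orthogonality is equivalent to the contraction condition. Here Lemma~\ref{L-iota-ab} is the natural tool for moving $\iota_{\,\bf T}$ across wedge factors while tracking the sign $(-1)^{q(\deg\alpha-1)}$, and I would lean on it to reconcile the two ways of writing the projection. The degree count also needs the hypothesis that $\eta$ be a genuine one-form, i.e.\ that $d\omega$ has no component with $\bf T$-contraction in $\Lambda^{\ge 2}(\omega^\perp)$ surviving — which is exactly why the construction singles out the ``best approximation of $d\omega$ by $\omega\wedge(\text{one-form})$'' described before the statement. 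Once the sign conventions are pinned down, metric-independence is automatic from the formula $\eta=\iota_{\,\bf T}\,d\omega$.
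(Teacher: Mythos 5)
Your proof is correct and takes essentially the same route as the paper: both identify the $g$-orthogonal projection of $d\omega$ onto $(\RR\cdot\omega)\wedge\omega^\perp$ by contracting with ${\bf T}$ (the paper compresses your adapted-coframe decomposition into the single observation that $g$-orthogonality means $(d\omega-\omega\wedge\eta)({\bf T},X)=0$ for all $X\in{\cal D}$), and both then use $\eta(T_i)=0$ together with $\iota_{\,\bf T}\,\omega=1$ to conclude $\eta=\iota_{\,\bf T}\,d\omega$, with metric-independence and the sign $(-1)^{q-1}$ in $\eta=(-1)^{q-1}{\cal L}_{\,\bf T}\,\omega$ following exactly as you argue. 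Your write-up simply makes explicit several steps the paper leaves implicit (uniqueness of $\eta$, the repeated-contraction argument for $\eta(T_i)=0$, and the graded-commutator computation).
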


\begin{proof}
The $g$-orthogonality $d\omega-\omega\wedge\eta\,\perp\,(\RR\,\omega)\wedge\omega^\perp$ means
 $(d\omega -\omega\wedge\eta)({\bf T},\,X)=0$ for any $X\in{\cal D}$;
thus, $\iota_{\,X}(\iota_{\,{\bf T}}\,d\omega-\iota_{\,{\bf T}}(\omega\wedge\eta))=0$  for any $X\in TM$.
Using $\eta(T_i)=0$
and \eqref{E-omega-T}
we get~\eqref{E-eta}.
\end{proof}

The $(2q+1)$-form $\eta\wedge(d\eta)^q$
represents the \textit{Godbillon--Vey type invariant} \eqref{E-omega-T}
of a pair $(\omega,{\bf T})$.

 Since all the vectors  uniquely decompose into ${\cal D}^\bot$- and ${\cal D}$- components,
there are 3 special cases for
another pair of the same sort, $(\tilde\omega,\widetilde{\bf T})$, satisfying~\eqref{E-omega-T}:

\smallskip
(i)~$\widetilde{\bf T}$ is parallel to ${\bf T}$ and $\tilde\omega$ is parallel to $\omega$,

\smallskip
(ii)~$\widetilde T_i-T_i$ belongs to ${\cal D}$ (hence $\iota_{\,\widetilde{\bf T}}\,\omega=1$)
and $\tilde\omega=\omega$,

\smallskip
(iii)~$\widetilde T_i=T_i$ and $\tilde\omega = \omega +\mu$ for some $q$-form $\mu$ such that $\iota_{\,\bf T}\,\mu=0$.

\smallskip\noindent
Notice that the distribution ${\cal D}$ is preserved in cases (i) and~(ii).


\begin{proposition}
The number $\gv(\omega,{\bf T})$ does not change when we modify $(\omega,{\bf T})$ as in case \rm{(i)},
that is $\widetilde T_i=C_i^j T_j$ and $\tilde\omega=\det C^{-1}\omega$ for some $C:M\to GL(q,\RR)$,
such that $\det C$ is constant on ${\cal D}^\bot$-curves.
\end{proposition}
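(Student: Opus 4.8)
The plan is to compute directly how the one-form $\eta$ transforms under the modification of case~(i), and then to show that the top-form $\eta\wedge(d\eta)^q$ changes only by an exact form, so its integral is unaffected.

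First I would record the effect on the multivector. Since $\widetilde T_i=C_i^j T_j$, antisymmetry of the wedge product gives $\widetilde{\bf T}=(\det C)\,{\bf T}$, so with $\tilde\omega=(\det C)^{-1}\omega$ the normalization $\iota_{\,\widetilde{\bf T}}\,\tilde\omega=(\det C)(\det C)^{-1}\iota_{\,\bf T}\,\omega=1$ is automatic, confirming $(\tilde\omega,\widetilde{\bf T})$ is of the required sort. Writing $f=(\det C)^{-1}$, the hypothesis that $\det C$ is constant along ${\cal D}^\bot$-curves translates into $T_i(f)=0$ for all $i$, that is $\iota_{\,T_i}df=0$.

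The key computation is $\tilde\eta=\iota_{\,\widetilde{\bf T}}\,d\tilde\omega$. Expanding $d\tilde\omega=df\wedge\omega+f\,d\omega$ and using $\widetilde{\bf T}=f^{-1}{\bf T}$, the only nonroutine ingredient is the contraction $\iota_{\,\bf T}(df\wedge\omega)$. Evaluating on a vector $X$ through $(df\wedge\omega)(T_1,\ldots,T_q,X)$ and expanding the wedge, every term carrying a factor $df(T_i)$ drops out by $\iota_{\,T_i}df=0$, leaving only $(-1)^q df(X)\,\omega(T_1,\ldots,T_q)=(-1)^q df(X)$ thanks to \eqref{E-omega-T}; hence $\iota_{\,\bf T}(df\wedge\omega)=(-1)^q df$. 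Combining with \eqref{E-eta} yields $\tilde\eta=\eta+(-1)^q f^{-1}df$. Since $f$ is nowhere zero (as $\det C\in GL(q,\RR)$ and $M$ is connected), the extra term $f^{-1}df=d\log|f|$ is exact; in particular $d\tilde\eta=d\eta$.

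From here the conclusion is immediate: because $d\tilde\eta=d\eta$ we obtain $\tilde\eta\wedge(d\tilde\eta)^q-\eta\wedge(d\eta)^q=(-1)^q\,d\log|f|\wedge(d\eta)^q$, and since $(d\eta)^q$ is closed this equals $(-1)^q\,d\big(\log|f|\,(d\eta)^q\big)$, an exact $(2q+1)$-form. Integrating over closed $M$ (or over the relatively compact domain $G$ containing the supports) and applying Stokes' theorem gives $\gv(\tilde\omega,\widetilde{\bf T})-\gv(\omega,{\bf T})=0$. The point demanding care is precisely the contraction identity: the hypothesis on $\det C$ is exactly what annihilates the cross terms $df(T_i)$ and thereby forces $\eta$ to change only by a closed, indeed exact, one-form; without it $d\tilde\eta$ would differ from $d\eta$ and the invariance would break down.
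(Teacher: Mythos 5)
Your proof is correct and follows essentially the same route as the paper's: verify the normalization, show that under the hypothesis $T_i(\det C)=0$ the one-form $\eta$ changes only by an exact term ($\tilde\eta-\eta$ proportional to $d\log|\det C|$), hence $d\tilde\eta=d\eta$ and $\eta\wedge(d\eta)^q$ changes by an exact $(2q+1)$-form, killed by Stokes' theorem. (Your sign $(-1)^q$ in $\iota_{\,\bf T}(df\wedge\omega)=(-1)^q df$ is in fact the careful one --- the paper's proof states $\tilde\eta=\eta-d(\log c)$ without this factor --- but the discrepancy is immaterial since either way the change is exact.)
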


\begin{proof}
Denote  a function $c$ on $M$ by $c:=\det C=\eps_{j_1,\ldots, j_q}C_1^{j_1}\cdot\ldots\cdot C_q^{j_q}$ .
 In this case, $(\tilde\omega,\widetilde{\bf T})$ obeys~\eqref{E-omega-T}:
\begin{eqnarray*}
 \iota_{\,\widetilde{\bf T}}\,\tilde\omega
 \eq C_1^{j_1}\cdot\ldots\cdot C_q^{j_q}\tilde\omega(T_{j_1},\ldots,T_{j_q})\\
 \eq c^{-1}\eps_{j_1,\ldots, j_q}C_1^{j_1}\cdot\ldots\cdot C_q^{j_q}\,\omega(T_{1},\ldots,T_{q})=\iota_{\,\bf T}\,\omega =1.
\end{eqnarray*}
 By Proposition~\ref{L-eta}, using conditions $T_i(c)=0$, we find
\begin{eqnarray*}
  \tilde\eta \eq (-1)^{q-1}{\cal L}_{\widetilde{\bf T}}\,( c^{-1}\,\omega)
  = c^{-1}\iota_{\widetilde{\bf T}}\,( c\,d( c^{-1})\wedge\omega + d\omega)\\
  \eq \eta - c^{-1}\iota_{\,\bf T}( d c\wedge\omega)  = \eta-d(\log c),
\end{eqnarray*}
thus, $\eta\wedge (d\eta)^q$ changes by the closed form $d(\log c)\wedge (d\eta)^q$ when $T_i(c)=0$. This implies the claim.
\end{proof}

\begin{definition}\label{D-t-umb}\rm
Let $\nabla$ be the Levi-Civita connection of $g\in{\rm Riem}(M,{\cal D},{\bf T})$.
The (non-symmetric) second fundamental form $h:{\cal D}\times{\cal D}\to{\cal D}^\bot$ of
${\cal D}$ (and similarly $h^\bot$ for the normal distribution ${\cal D}^\bot$) is
\begin{equation*}
 h_{X,Y} = (\nabla_X Y)^\bot\quad (X,Y\in {\mathfrak X}_{\cal D}).
\end{equation*}
A distribution ${\cal D}$ is called \textit{totally geodesic} if ${\rm Sym}(h)=0$ (the symmetrization of $h$), \textit{harmonic} if $H=0$,
and \textit{totally umbilical} if ${\rm Sym}(h)=g_{\,|{\cal D}}{\cdot}H$.
\end{definition}

 There exist many examples of distributions and foliations with such properties.
Assume that the mean curvature vector $H^\bot := \sum\nolimits_{\,i}(\nabla_{T_i}\,T_i)^\top$ of
${\cal D}^\bot$,
is nonzero on an open non-empty set $U\subset M$.

\begin{lemma}
\label{L-02a}
We have
\begin{eqnarray}\label{eq1:pw2013}
 && \eta = (-1)^{q-1}(H^\bot)^\flat,\\
\label{eq1:pw2013b}
 && (-1)^{q-1} d\eta(X,Y) = \<\nabla_X H^\bot, Y\> - \<\nabla_Y H^\bot, X\>.
\end{eqnarray}
\end{lemma}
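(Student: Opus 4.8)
The plan is to prove both identities by a direct frame computation, reducing everything to the structure of the orthonormal frame $T_1,\dots,T_q$ of ${\cal D}^\bot$ relative to a compatible metric. For \eqref{eq1:pw2013} I would start from the description $\eta=\iota_{\,\bf T}\,d\omega$ in \eqref{E-eta} and evaluate $\eta$ on a test vector. Since $H^\bot\in{\cal D}$ by its very definition, the one-form $(H^\bot)^\flat$ annihilates each $T_i$; and as $\eta\in\omega^\perp$ (that is $\eta(T_i)=0$, already used in the proof of Proposition~\ref{L-eta}), it suffices to compare $\eta(X)$ with $(-1)^{q-1}\<H^\bot,X\>$ for $X\in{\mathfrak X}_{\cal D}$.

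First I would expand $\eta(X)=d\omega(T_1,\dots,T_q,X)$ by the intrinsic (Cartan) formula for the exterior derivative of a $q$-form. The decisive remark is that $\omega=T_1^\flat\wedge\dots\wedge T_q^\flat$ vanishes on every $q$-tuple containing $X$: orthonormality of the $T_i$ together with $X\perp T_i$ creates a zero column in the resulting Gram-type determinant. Hence all the terms in which a frame vector differentiates a value $\omega(\dots)$ drop out — each such value either contains $X$ as an argument of $\omega$, or equals the constant $\omega(T_1,\dots,T_q)=1$ — and among the bracket terms only those carrying $[T_i,X]$ survive, the $[T_i,T_j]$-terms vanishing for the same determinant reason.

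Next I would evaluate the surviving factor $\omega\big([T_i,X],T_1,\dots,\widehat{T_i},\dots,T_q\big)$. By orthonormality this determinant collapses to $\pm\<T_i,[T_i,X]\>$, the sign being $(-1)^{i-1}$ from the cyclic permutation that moves the $[T_i,X]$-column into row $i$; together with the weight from the Cartan formula, each summand equals $(-1)^q\<T_i,[T_i,X]\>$. Then I would invoke the elementary identity $\<T_i,[T_i,X]\>=-\<\nabla_{T_i}T_i,X\>$, which follows from differentiating $\<T_i,X\>=0$ along $T_i$ and from $\<T_i,\nabla_X T_i\>=\tfrac12\,X\<T_i,T_i\>=0$. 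Summing over $i$ gives $\sum_i\<T_i,[T_i,X]\>=-\<H^\bot,X\>$ (only the ${\cal D}$-component $^\top$ of $\nabla_{T_i}T_i$ survives against $X\in{\cal D}$), whence $\eta(X)=(-1)^q(-\<H^\bot,X\>)=(-1)^{q-1}\<H^\bot,X\>$, which is \eqref{eq1:pw2013}.

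Finally, \eqref{eq1:pw2013b} would follow formally from \eqref{eq1:pw2013}. Writing $\eta(Z)=(-1)^{q-1}\<H^\bot,Z\>$ in
\[
 d\eta(X,Y)=X\big(\eta(Y)\big)-Y\big(\eta(X)\big)-\eta([X,Y])
\]
and expanding by metric compatibility of $\nabla$ together with torsion-freeness $[X,Y]=\nabla_X Y-\nabla_Y X$, the terms $\<H^\bot,\nabla_X Y\>$ and $\<H^\bot,\nabla_Y X\>$ cancel in pairs, leaving $(-1)^{q-1}d\eta(X,Y)=\<\nabla_X H^\bot,Y\>-\<\nabla_Y H^\bot,X\>$. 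I expect the only delicate point of the argument to be the sign bookkeeping in the determinant and the Cartan formula; the geometric content is carried by the single identity $\<T_i,[T_i,X]\>=-\<\nabla_{T_i}T_i,X\>$.
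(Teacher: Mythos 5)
Your proof is correct and follows essentially the same route as the paper: both expand $\eta=\iota_{\,\bf T}\,d\omega$ on a vector $X\in{\cal D}$ via the Cartan formula, observe that all terms containing $X$ as an argument of $\omega$ die by the zero-column (Gram determinant) argument, reduce the surviving bracket terms to $\<T_i,[T_i,X]\>=-\<\nabla_{T_i}T_i,X\>$ to get \eqref{eq1:pw2013}, and then obtain \eqref{eq1:pw2013b} from \eqref{eq1:pw2013} by the Cartan formula for $d\eta$ together with metric compatibility and torsion-freeness. Your treatment of the sign bookkeeping and of the reduction to $X\in{\cal D}$ (both sides vanishing on ${\cal D}^\bot$) is in fact slightly more explicit than the paper's.
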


\begin{proof} Using formula for the exterior derivative of a $q$-form,
we get for $X\in{\cal D}$ and ${\bf T}=T_1\wedge \ldots \wedge T_q$,
\begin{eqnarray*}
 && d\omega(X,{\bf T}) = X(\iota_{\,\bf T}\,\omega)
 +\sum\nolimits_{i} T_i(\omega(X,\widehat{\bf T}_i)\\
 && +\sum\nolimits_{i<j} (-1)^{i+j}\omega([T_i,T_j],X,\widehat{\bf T}_{i,j})
 +\sum\nolimits_{i} (-1)^{i}\omega([X,T_i],\widehat{\bf T}_i) \\
 && \sum\nolimits_{i} (-1)^{i}\<[X,T_i],T_i\>\,\omega(T_i,\widehat{\bf T}_i)
 =\sum\nolimits_{i} (-1)^{i}\<[X,T_i],T_i\>\,\omega(T_i,\widehat{\bf T}_i)\\
 && =\sum\nolimits_{i} \<\nabla_X\,{T_i}-\nabla_{T_i}\,X,T_i\>
 = -\<X,\sum\nolimits_{i}\nabla_{T_i}\,T_i\> =-\<X,H^\bot\>,
\end{eqnarray*}
where
$\widehat{\bf T}_i=T_1\wedge\ldots\wedge \widehat T_i\wedge\ldots\wedge T_q$,
and as usual the hat over a symbol denotes its omission.
For $X\in{\cal D}^\bot$ this is obvious.
Thus, \eqref{eq1:pw2013} follows.
Similar calculation for $d\eta(X,Y)$
and \eqref{eq1:pw2013}
yield~\eqref{eq1:pw2013b}.
\end{proof}

The unit vector $N=H^\bot/\|H^\bot\|$, called the \textit{principal normal} of the distribution ${\cal D}^\bot$, and the
\textit{binormal distribution} ${\cal B}={\cal D}\cap N^\bot$ are defined on $U$,
and by Lemma~\ref{L-02a}, we have
\[
 \eta=(-1)^{q-1} \|H^\bot\|\cdot N^\flat.
\]

\begin{proposition}\label{P-Hbot-0}
 If ${\cal D}=\ker\omega$ and $\gv(\omega,{\bf T})\ne0$ then there is no compatible metric $g\in {\rm Riem}(M,{\cal D},{\bf T})$
 such that the distribution ${\cal D}^{\,\bot}={\rm span}({\bf T})$ is harmonic with respect to $g$.
\end{proposition}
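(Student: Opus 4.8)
The plan is to argue by contradiction using Lemma~\ref{L-02a}. Suppose such a compatible metric $g$ exists, so that the mean curvature vector $H^\bot=\sum_i(\nabla_{T_i}T_i)^\top$ of ${\cal D}^\bot={\rm span}({\bf T})$ vanishes identically on $M$. By \eqref{eq1:pw2013} we have $\eta=(-1)^{q-1}(H^\bot)^\flat$, so $H^\bot\equiv 0$ forces $\eta\equiv 0$ as a one-form. Then the integrand $\eta\wedge(d\eta)^q$ is identically zero, and therefore
\[
 \gv(\omega,{\bf T})=\int_M \eta\wedge(d\eta)^q = 0,
\]
contradicting the hypothesis $\gv(\omega,{\bf T})\ne 0$. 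Hence no such metric can exist.

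The essential content is thus the identity \eqref{eq1:pw2013}, which says that for a \emph{compatible} metric the one-form $\eta$ is metrically dual (up to sign) to the mean curvature of the normal distribution. First I would invoke Proposition~\ref{L-eta} to recall that $\eta=\iota_{\,\bf T}\,d\omega$ is independent of the compatible metric; this is what makes the argument well-posed, since $\gv(\omega,{\bf T})$ is defined from $(\omega,{\bf T})$ alone while $H^\bot$ is metric-dependent. The harmonicity hypothesis $H^\bot=0$ is stated relative to the candidate metric $g$, and \eqref{eq1:pw2013} is precisely the bridge: it evaluates the metric-independent $\eta$ in terms of the $g$-dependent quantity $H^\bot$, allowing the harmonicity of ${\cal D}^\bot$ to be transferred into the vanishing of $\eta$.

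The main (and essentially only) obstacle is to confirm that \eqref{eq1:pw2013} applies verbatim here: the hypothesis ${\cal D}^\bot={\rm span}({\bf T})$ must match the setup of Lemma~\ref{L-02a}, where $T_i$ are $g$-orthonormal and span ${\cal D}^\bot$. Since $g\in{\rm Riem}(M,{\cal D},{\bf T})$ by assumption, the $T_i$ are orthonormal and orthogonal to ${\cal D}$, so ${\cal D}^\bot={\rm span}(T_1,\ldots,T_q)={\rm span}({\bf T})$ automatically, and Lemma~\ref{L-02a} is directly applicable with $H^\bot$ as defined there. No convergence or singularity issues arise because $M$ is the domain of integration and the integrand vanishes pointwise. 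I expect the whole argument to be a short direct contradiction, with the only subtlety being the logical bookkeeping that $\eta$ itself is intrinsic while its vanishing is detected through the metric-dependent formula \eqref{eq1:pw2013}.
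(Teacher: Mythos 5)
Your proof is correct and is exactly the argument the paper intends: Proposition~\ref{P-Hbot-0} is stated immediately after Lemma~\ref{L-02a} with no separate proof, precisely because harmonicity ($H^\bot=0$) forces $\eta=0$ via \eqref{eq1:pw2013} and hence $\gv(\omega,{\bf T})=0$. Your additional remark that $\eta=\iota_{\,\bf T}\,d\omega$ is metric-independent (Proposition~\ref{L-eta}) while $H^\bot$ is metric-dependent is the right way to see why the contradiction is well-posed, and matches the paper's setup.
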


\begin{example}[Case $q=1$]\rm
Let $M^{3}$ be equipped with a plane field ${\cal D}=\ker\omega$ (for a one-form $\omega$), and a~vector field $T$
such that $\omega(T)=1$. Then $\eta=\iota_{\,T}\,d\omega$. For a compatible Riemannian metric $g$ on $M^3$,
the unit normal $N$, the binormal $B$ and the \textit{torsion} $\tau$ of $T$-curves are defined on an open subset $U$,
 where the \textit{curvature} $k$ of $T$-curves is nonzero. Then \eqref{eq1:pw2013} reads $\eta=k\,N^\flat$.
Let $T$ be a geodesic vector field on $M^3\setminus\Sigma$ with a Riemannian metric $g$, and $\omega=T^\sharp$.
Then $\eta$ vanishes, hence $\eta\wedge d\eta=0$.
\end{example}

\begin{example}\rm
Assume that $\dim M=2n+1\ge 2q+1$. Let a $q$-form $\omega$, a $q$-vector ${\bf T}$ and a one-form $\eta$
be as above.
Then the following {\it Godbillon--Vey type invariants} are well-defined, see \cite{fh,rw-gv1} for $q=1$:
\begin{equation*}
 \gv_{\bf s}(\omega,T) = \int_M \eta\wedge(d\eta)^{s_0}\wedge(d\omega_1)^{s_1}\wedge\ldots\wedge(d\omega_q)^{s_q},
\end{equation*}
where ${\bf s}=(s_0,\ldots,s_q),\ |{\bf s}|=s_0+\ldots+s_q=n$.
For $s_0=n=q$, we get the functional \eqref{E-gv-invar}.
If ${\mathcal D}=\ker\omega$ is integrable, i.e., $d\omega=\omega\wedge\eta$, which applying $d$ yields $\omega\wedge d\eta =0$;
then, since \eqref{E-deta-q} and $\omega=\omega_1\wedge\ldots\wedge\omega_q$ hold,
we have $\gv_{\bf s}(\omega,{\bf T}) = 0$ for all $s_0\ge q$.
To illustrate the above, consider
a 3-contact distribution ${\mathcal D}=\ker\omega$ on $M^{4n+3}$
with the Reeb field ${\bf T}=T_1\wedge T_2\wedge T_3$.
Then $\omega({\bf T})=1$ and $\eta:=\iota_{\,{\bf T}}\,d\omega$ vanishes, see \cite{b2010}, hence $\gv_{\bf s}(\omega,{\bf T})=0$.
The above Godbillon--Vey type invariants can be also applied to
globally framed $f$-structures and almost para-$\phi$-structures with complemented frames.
\end{example}

\section{Tautness}

Let ${\cal D}$ be a codimension $q$ distribution on $(M,g)$,
and ${\cal D}^{\,\pitchfork}$ a transverse distribution with a local orthonormal frame $\{T_i\},\ 1\le i\le q$.
Assume that ${\cal D}^{\,\pitchfork}$ is oriented and let $\Omega^{\,\pitchfork}$ be its volume form:
\begin{equation*}
 \Omega^{\,\pitchfork}(X_1, \ldots , X_{q}) = \det [\, \< X_i, T_j \>,\ i, j = 1, \ldots , q\,]
\end{equation*}
for all vector fields $X_1, \ldots , X_{q}$ on $M$.
Let $H^{\,\pitchfork}$ be the mean curvature vector field of ${\cal D}^{\,\pitchfork}$.

\begin{proposition}
For any vector field $Z$ on a Riemannian manifold $(M,g)$ one has
\begin{equation}\label{eq:rum}
 d\Omega^{\,\pitchfork}(Z, T_1, \ldots , T_{q}) = - \<Z, H^{\,\pitchfork}\>.
\end{equation}
\end{proposition}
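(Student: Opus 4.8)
The plan is to compute the $(q+1)$-form $d\Omega^{\,\pitchfork}$ evaluated on the frame $(Z, T_1, \ldots, T_q)$ directly from the intrinsic formula for the exterior derivative of a $q$-form, exactly as in the proof of Lemma~\ref{L-02a}. The essential observation is that $\Omega^{\,\pitchfork}(T_1, \ldots, T_q) = 1$ identically (since $\{T_i\}$ is orthonormal and $\Omega^{\,\pitchfork}$ is their volume form), and more generally $\Omega^{\,\pitchfork}(X, \widehat{T}_i) = 0$ whenever $X \in {\cal D}^{\,\pitchfork}$ is any of the $T_j$, because the determinant has a repeated row. Thus the bulk of the terms in the Cartan expansion collapse.

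\smallskip
First I would split $Z$ into its ${\cal D}$- and ${\cal D}^{\,\pitchfork}$-components. When $Z \in {\cal D}^{\,\pitchfork} = {\rm span}(T_1, \ldots, T_q)$, both sides vanish: the left side because $(Z, T_1, \ldots, T_q)$ are linearly dependent, and the right side because $H^{\,\pitchfork}$ lies in the orthogonal complement of ${\cal D}^{\,\pitchfork}$. So it suffices to treat $Z \in {\cal D}$. For such $Z$, I would write out
\[
 d\Omega^{\,\pitchfork}(Z, T_1, \ldots, T_q) = Z(\Omega^{\,\pitchfork}(T_1,\ldots,T_q)) + \sum\nolimits_i (-1)^i T_i(\Omega^{\,\pitchfork}(Z, \widehat{T}_i)) + (\text{bracket terms}),
\]
and then kill the first two blocks: the first because $\Omega^{\,\pitchfork}(T_1,\ldots,T_q)\equiv 1$ is constant, and the terms $\Omega^{\,\pitchfork}(Z,\widehat{T}_i)$ likewise because $Z\in{\cal D}$ is orthogonal to every $T_j$, forcing a zero row. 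Among the bracket terms $\omega([T_i,T_j],\ldots)$ and $\omega([Z,T_i],\ldots)$, only the contributions where the bracket supplies its $T_i$-component along the missing slot survive, again by the repeated-row principle.

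\smallskip
What remains, after this pruning, is a sum of the form $\sum\nolimits_i (-1)^i \langle [Z, T_i], T_i\rangle \cdot \Omega^{\,\pitchfork}(T_i, \widehat{T}_i)$, which is the exact analogue of the surviving term in Lemma~\ref{L-02a}. Rewriting $\langle [Z,T_i], T_i\rangle = \langle \nabla_Z T_i - \nabla_{T_i} Z, T_i\rangle$ and using $\langle \nabla_Z T_i, T_i\rangle = \tfrac12 Z\langle T_i, T_i\rangle = 0$ (orthonormality), this reduces to $-\sum\nolimits_i \langle \nabla_{T_i} Z, T_i \rangle$. Finally, since $\langle \nabla_{T_i} Z, T_i\rangle = -\langle Z, \nabla_{T_i} T_i\rangle$ (because $\langle Z, T_i\rangle = 0$ for $Z\in{\cal D}$), this collapses to $-\langle Z, \sum\nolimits_i \nabla_{T_i} T_i\rangle$. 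Projecting onto ${\cal D}$ is harmless since $Z\in{\cal D}$, so $\sum\nolimits_i(\nabla_{T_i}T_i)^\top = H^{\,\pitchfork}$ and we obtain $-\langle Z, H^{\,\pitchfork}\rangle$, as claimed.

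\smallskip
I expect the main obstacle to be bookkeeping rather than conceptual: carefully tracking which of the many Cartan-formula terms vanish by the repeated-row (determinant) property versus by orthonormality, and keeping the signs consistent. This is the same delicate index-chasing already carried out in Lemma~\ref{L-02a}, so the safest route is to mirror that computation step by step, invoking $\Omega^{\,\pitchfork}(T_1,\ldots,T_q)\equiv 1$ wherever that proof used $\iota_{\,\bf T}\,\omega = 1$.
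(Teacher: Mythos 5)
Your overall plan is the right one, and it is essentially the computation the paper itself relies on: the paper states this proposition \emph{without} proof, and the nearest argument is the proof of Lemma~\ref{L-02a}, which you mirror faithfully (Cartan expansion, collapse of most terms via $\Omega^{\,\pitchfork}(T_1,\ldots,T_q)\equiv 1$ and the zero-row/repeated-row principle, then conversion of the surviving bracket sum into $-\langle Z,\sum_i\nabla_{T_i}T_i\rangle$).

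There is, however, one genuine flaw in the reduction step: you split $Z$ into its ${\cal D}$- and ${\cal D}^{\,\pitchfork}$-components and then assert that ``$Z\in{\cal D}$ is orthogonal to every $T_j$''. In this section of the paper the metric $g$ is \emph{arbitrary}: the frame $\{T_i\}$ of ${\cal D}^{\,\pitchfork}$ is $g$-orthonormal, but ${\cal D}$ is only assumed \emph{transverse} to ${\cal D}^{\,\pitchfork}$, not $g$-orthogonal to it. So for $Z\in{\cal D}$ the row $(\langle Z,T_1\rangle,\ldots,\langle Z,T_q\rangle)$ need not vanish, and the terms $T_i\bigl(\Omega^{\,\pitchfork}(Z,\widehat T_i)\bigr)$ and the $[T_i,T_j]$-terms you discard do not obviously die. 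This generality is not pedantic: in the proof of Theorem~\ref{theo:taut} ($\Leftarrow$) the proposition is applied to a metric produced by the Hahn--Banach argument, for which orthogonality of ${\cal D}$ and ${\cal D}^{\,\pitchfork}$ is \emph{deduced afterwards}; a proof valid only for compatible metrics would make that step circular. The repair is immediate: decompose $Z$ with respect to the $g$-orthogonal splitting $TM={\cal D}^{\,\pitchfork}\oplus({\cal D}^{\,\pitchfork})^{\perp_g}$ rather than the transverse splitting ${\cal D}\oplus{\cal D}^{\,\pitchfork}$. The ${\cal D}^{\,\pitchfork}$-part is handled exactly as you say (linear dependence on the left, $H^{\,\pitchfork}\perp{\cal D}^{\,\pitchfork}$ on the right), and for $Z\perp_g{\cal D}^{\,\pitchfork}$ every step of your computation goes through verbatim, ending with $-\langle Z,\sum_i\nabla_{T_i}T_i\rangle=-\langle Z,H^{\,\pitchfork}\rangle$. (One small internal slip: with $\Omega^{\,\pitchfork}(T_i,\widehat T_i)=(-1)^{i-1}$ the surviving sum equals $+\sum_i\langle\nabla_{T_i}Z,T_i\rangle$, not $-\sum_i\langle\nabla_{T_i}Z,T_i\rangle$; your two sign slips cancel, so the final formula comes out correct.)
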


For example, if ${\cal D}^{\,\pitchfork}$ is harmonic, i.e., $H^{\,\pitchfork}=0$,
then $\Omega^{\,\pitchfork}$ is ${\cal D}^{\,\pitchfork}$-{\it closed},
i.e.,
$d\Omega^{\,\pitchfork}(\,\cdot\,, X_1, \ldots, X_{q}) = 0$
for all $X_1, \ldots , X_{q}$ tangent to ${\cal D}^{\,\pitchfork}$.

 Denote by $D^k$ the
 linear space of (global) $k$-forms on $M$ equipped with the C$^\infty$-topology.
Recall that ~$k$-\textit{currents} are continuous (with respect to the weak $*$-topology) functionals
on the Freshet space $D^k$,  that is the space $D_k= (D^{k})^\star$ of $k$-{currents} is dual of $D^k$.
The space $D = \bigoplus_k D_k$ of all currents
 can be equipped with a linear \textit{boundary operator} $\partial:D\to D$, the adjoint to the
 exterior differential $d$:
 \begin{equation*}
 \partial : D_k\to D_{k-1}\ \text{and}\ \partial z(\Omega) = z (d\Omega)
 \ \text{for all}\ z\in D_k\ \text{and}\ \Omega\in D^{k-1}.
 \end{equation*}
 Certainly, $d^2 = 0$ implies $\partial^2 = 0$, therefore, one can consider
 the spaces $Z_k = \ker\partial\subset D_k$ of $k$-{\it cycles} and
 $B_k = \im\partial\in D_k$ of $k$-{\it boundaries}, and the corresponding
 homologies $H_k(M) = Z_k/B_k$.

Any $q$-vector $v = v_1\wedge\ldots\wedge v_q, \  v_i\in T_xM, \ x\in M$
defines a {\it Dirac current}~$z_v$:
\begin{equation*}
 z_v (\Omega) = \Omega (v_1\ldots , v_q), \quad \Omega\in D^q.
\end{equation*}
Consider the closed convex cone $C^{\,\pitchfork}\subset D_q$ generated by all Dirac currents $z_v$,
where $v = v_1\wedge\ldots\wedge v_{q}$ and $(v_1,\ldots , v_{q})$ is
a positive oriented frame of ${\cal D}^{\,\pitchfork}_x$ at a point $x\in M$.
 If $M$ is compact then the cone $C^{\,\pitchfork}$
 of ${\cal D}^{\,\pitchfork}$-currents
 has compact base.
(By a {\it base} of a cone $C$ contained in a topological vector space $V$ we mean
 the set $l^{-1}(1)$, where $l:V\to\RR$ is a continuous linear functional positive on $C\setminus\{0\}$).
 Let~$B^{\,\pitchfork}$ be the closed linear subspace of $D_q$ generated by the boundaries of all Dirac currents
 $z_v\in D_{q+1}$, where $v=w\wedge v_1\wedge\ldots\wedge v_{q}$ with $w\in T_xM, v_i\in {\cal D}^{\,\pitchfork}_x$ and $x\in M$.

\begin{definition}\label{defi:taut}\rm
 A pair $({\cal D},{\cal D}^{\,\pitchfork})$ of transverse distributions is called

 (1) {\it geometrically taut} if there exists a Riemannian metric $g$ on $M$
 for which the distribution ${\cal D}^{\,\pitchfork}$ becomes harmonic and orthogonal to ${\cal D}$ on $(M, g)$;

 (2) {\it topologically taut}
 if
 $C^{\,\pitchfork}$
 intersects trivially the smallest closed linear subspace $P^{\,\pitchfork}$ of ${D}_q$ containing $B^{\,\pitchfork}$ and all
 Dirac currents $z_v$, where $v=w\wedge v_1\wedge\ldots\wedge v_{q-1}$
 with $w\in{\cal D}_x$, $v_i\in T_xM$ and~$x\in M$.
\end{definition}

For integrable distributions $({\cal D},{\cal D}^{\,\pitchfork})$ (i.e., a pair of transverse foliations), the above $C^{\,\pitchfork},B^{\,\pitchfork}$ and the two kinds of tautness were introduced and studied in \cite{wa2013}.
Our goal here is to show that the two types of tautness in Definition~\ref{defi:taut} are equivalent when ${\cal D}^{\,\pitchfork}$ is integrable. To this end,
one has to apply the Sullivan's \textit{purification} of differential forms, see \cite{su2}.
If ${\cal D}^{\,\pitchfork}$ is not integrable then, unfortunately,  purification does not enjoy properties needed to prove equivalence of topological and geometrical tautness of the pair $({\cal D},{\cal D}^{\,\pitchfork})$.

\begin{theorem}\label{theo:taut}
 A pair $({\cal D},{\cal D}^{\,\pitchfork})$ with integrable $q$-dimensional distribution ${\cal D}^{\,\pitchfork}$ on a closed manifold $M^{2q+1}$ is geometrically taut if and only if it is topologically taut.
\end{theorem}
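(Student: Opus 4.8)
The plan is to run the classical Sullivan duality: realize both tautness notions as statements about how the transverse volume form $\Omega^{\,\pitchfork}$ pairs with the currents in $C^{\,\pitchfork}$ and $P^{\,\pitchfork}$, and pass between the two sides by a Hahn--Banach separation in the space $D_q$ of $q$-currents. The geometric input throughout is formula \eqref{eq:rum}, which identifies the ``mostly tangent'' part of $d\Omega^{\,\pitchfork}$ with the mean curvature $H^{\,\pitchfork}$; the linear-algebra input is that, since $\dim M=2q+1$ and $\dim{\cal D}=q+1$, any $q$-form $\psi$ with $\iota_X\psi=0$ for all $X\in{\cal D}$ is a (functional) multiple of the covolume of ${\cal D}^{\,\pitchfork}$, so being a transverse volume form is equivalent to the two linear conditions $\iota_{\cal D}\psi=0$ and $\psi>0$ on positive ${\cal D}^{\,\pitchfork}$-frames.

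First I would prove geometrically taut $\Rightarrow$ topologically taut. Fix a metric $g$ making ${\cal D}^{\,\pitchfork}$ harmonic and orthogonal to ${\cal D}$, and use $\Omega^{\,\pitchfork}$ as a test form. On a generator $z_v$ of $C^{\,\pitchfork}$, with $v$ a positively oriented ${\cal D}^{\,\pitchfork}$-frame, $z_v(\Omega^{\,\pitchfork})=\det[\langle v_i,T_j\rangle]>0$, so $\Omega^{\,\pitchfork}$ is strictly positive on $C^{\,\pitchfork}\setminus\{0\}$. On the extra generators $z_v$ of $P^{\,\pitchfork}$, with $v=w\wedge v_1\wedge\ldots\wedge v_{q-1}$ and $w\in{\cal D}_x$, orthogonality gives $\langle w,T_j\rangle=0$, so the first row of the defining determinant vanishes and $z_v(\Omega^{\,\pitchfork})=0$; on a boundary generator $\partial z_v$, with $v=w\wedge v_1\wedge\ldots\wedge v_q$ and $v_i\in{\cal D}^{\,\pitchfork}_x$, the definition of $\partial$ together with \eqref{eq:rum} and $H^{\,\pitchfork}=0$ gives $\partial z_v(\Omega^{\,\pitchfork})=d\Omega^{\,\pitchfork}(w,v_1,\ldots,v_q)=0$. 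Hence the weak-$*$ continuous functional $z\mapsto z(\Omega^{\,\pitchfork})$ vanishes on all generators of $P^{\,\pitchfork}$, so on $P^{\,\pitchfork}$, while staying positive on $C^{\,\pitchfork}\setminus\{0\}$; therefore $C^{\,\pitchfork}\cap P^{\,\pitchfork}=\{0\}$.

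For the converse I would use the topological hypothesis $C^{\,\pitchfork}\cap P^{\,\pitchfork}=\{0\}$ and compactness of $M$, which makes $C^{\,\pitchfork}$ have compact base $K$ with $0\notin K$ and $K\cap P^{\,\pitchfork}=\emptyset$. Applying Hahn--Banach separation in the locally convex space $D_q$ (weak-$*$ topology) to the disjoint compact convex set $K$ and closed subspace $P^{\,\pitchfork}$ produces a weak-$*$ continuous functional, i.e.\ a smooth $q$-form $\psi$, that is strictly positive on $C^{\,\pitchfork}\setminus\{0\}$ and identically zero on $P^{\,\pitchfork}$. Vanishing on the extra Dirac generators forces $\iota_X\psi=0$ for all $X\in{\cal D}$, so by the dimension count $\psi$ is a positive multiple of the covolume of ${\cal D}^{\,\pitchfork}$; I would then choose a compatible metric with ${\cal D}\perp{\cal D}^{\,\pitchfork}$ and with the induced metric on ${\cal D}^{\,\pitchfork}$ normalized so that $\Omega^{\,\pitchfork}=\psi$. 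Finally, vanishing of $\psi$ on $B^{\,\pitchfork}$ reads $d\Omega^{\,\pitchfork}(w,v_1,\ldots,v_q)=0$ for all $w$ and all $v_i\in{\cal D}^{\,\pitchfork}$, in particular $d\Omega^{\,\pitchfork}(Z,T_1,\ldots,T_q)=0$ for every $Z$, whence $H^{\,\pitchfork}=0$ by \eqref{eq:rum}, so ${\cal D}^{\,\pitchfork}$ is harmonic and the pair is geometrically taut.

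The delicate step, and the place where integrability is genuinely used, is turning the abstract separating functional into the characteristic (volume) form of an actual metric while keeping the relative-closedness $d\Omega^{\,\pitchfork}(\,\cdot\,,T_1,\ldots,T_q)=0$: this is Sullivan's \emph{purification} of differential forms. Purification replaces the Hahn--Banach form by a decomposable, strictly positive, relatively closed $q$-form adapted to ${\cal D}^{\,\pitchfork}$, and it is carried out leafwise, so that it preserves the Rummler-type condition precisely because ${\cal D}^{\,\pitchfork}$ is a foliation. I expect this to be the main obstacle: for non-integrable ${\cal D}^{\,\pitchfork}$ there are no leaves to purify along, the purified form need not remain relatively closed, and the equivalence cannot be concluded --- which is exactly why the theorem is stated under the integrability hypothesis.
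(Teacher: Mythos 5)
Your proposal is correct and follows essentially the same route as the paper's proof: the forward direction tests $C^{\,\pitchfork}$ and $P^{\,\pitchfork}$ against the characteristic form $\Omega^{\,\pitchfork}$ via \eqref{eq:rum}, and the converse uses the compact base of $C^{\,\pitchfork}$, Hahn--Banach separation, representation of the separating functional by a smooth $q$-form, construction of a compatible metric, and again \eqref{eq:rum} to get $H^{\,\pitchfork}=0$. If anything, your ordering in the converse (first deducing $\iota_X\psi=0$ for all $X\in{\cal D}$ from the Dirac generators of $P^{\,\pitchfork}$, then building the metric so that $\psi=\Omega^{\,\pitchfork}$, and only then concluding harmonicity) is a slightly cleaner arrangement of the same steps the paper performs.
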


\proof
This is similar to proof in \cite[Section~3]{wa2013} when also ${\cal D}$ is integrable.

$\Rightarrow)$\ Let the pair $({\cal D},{\cal D}^{\,\pitchfork})$ be geometrically taut and $g$ a Riemannian metric making
${\cal D}^{\,\pitchfork}$ harmonic and orthogonal to ${\cal D}$.
The volume form $\Omega^{\,\pitchfork}$ of ${\cal D}^{\,\pitchfork}$ on $(M, g)$ is ${\cal D}^{\,\pitchfork}$-closed;
it is
positive on $C^{\,\pitchfork}\setminus\{0\}$ and equal identically to zero on $P^{\,\pitchfork}$.
Therefore, $C^{\,\pitchfork}\cap P^{\,\pitchfork}=\{0\}$ and $({\cal D},{\cal D}^{\,\pitchfork})$ is topologically taut.

$\Leftarrow)$\ Assume now that the pair $({\cal D},{\cal D}^{\,\pitchfork})$ is topologically taut.
Since, as we mentioned before, the cone $C^{\,\pitchfork}$ has a compact base $B$,
the Hahn--Banach Theorem implies the existence of a continuous linear functional $\lambda: D_q\to\RR$ such that
$\lambda = 0$ on $P^{\,\pitchfork}$ and $\lambda = 1$ on $B$.
 The Schwarz's Theorem says that $D_q$ is also dual to $D^q$, i.e.,
 each continuous linear functional on $D^q$ comes from evaluation the currents in $D^q$ on some fixed $q$-form $\Omega$.
 Hence, $\lambda$ represents a $q$-form $\Omega$: $z(\Omega) = \lambda(z)$ for any $z\in D_{q}=(D^q)^*$.
Since $\Omega$ is positive on $C^{\,\pitchfork}$, there exists a Riemannian metric $g$ on $M$ for which $\Omega$ is the volume form
of ${\cal D}^{\,\pitchfork}$.
Since $\Omega$ vanishes on $B^{\,\pitchfork}$, the distribution ${\cal D}^{\,\pitchfork}$ on $(M, g)$ is harmonic.
Since ${\cal D}^{\,\pitchfork}$ belongs to the kernel of $\Omega$,
we get ${\cal D}^{\,\pitchfork}\subset{\cal D}^\bot$.
Comparing dimensions one has ${\cal D}^{\,\pitchfork} = {\cal D}^\bot$.
By \eqref{eq:rum}, $H^\bot=0$.
Thus, $({\cal D},{\cal D}^{\,\pitchfork})$ is geometrically taut.
\qed

\smallskip

By Theorem~\ref{theo:taut} and Proposition~\ref{P-Hbot-0},
$\gv(\omega, {\bf T})$ is an obstruction for topological tautness of $({\cal D},\, {\cal D}^{\,\pitchfork})$.

\begin{corollary}\label{cor:taut}
Let ${\cal D}=\ker\omega$ be a codimension $q$ distribution on $M^{2q+1}$ defined by a $q$-form $\omega$,
and ${\cal D}^{\,\pitchfork}={\rm span}(T_1,\ldots, T_q)$ an integrable distribution
spanned by $q$ linearly independent vector fields $T_i$ transverse to ${\cal D}$.
If $\gv(\omega, T_1\wedge \ldots \wedge T_q)\ne 0$,
then
$({\cal D},\, {\cal D}^{\,\pitchfork})$ is niether topologically nor geometrically~taut.
\end{corollary}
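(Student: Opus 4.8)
The plan is to argue by contraposition, using Theorem~\ref{theo:taut} to move between the two notions of tautness and Proposition~\ref{P-Hbot-0} to produce the contradiction. Since ${\cal D}^{\,\pitchfork}$ is integrable and $M$ is closed of dimension $2q+1$, Theorem~\ref{theo:taut} applies verbatim and tells us that $({\cal D},{\cal D}^{\,\pitchfork})$ is geometrically taut if and only if it is topologically taut. Hence it suffices to show that $\gv(\omega,{\bf T})\ne0$ rules out \emph{geometric} tautness; the exclusion of topological tautness then follows automatically, and the statement becomes the precise form of the remark that $\gv(\omega,{\bf T})$ obstructs tautness of $({\cal D},{\cal D}^{\,\pitchfork})$.

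So I would assume, aiming at a contradiction, that $({\cal D},{\cal D}^{\,\pitchfork})$ is geometrically taut, and fix a metric $g$ for which ${\cal D}^{\,\pitchfork}$ is harmonic and orthogonal to ${\cal D}$. Because $\dim{\cal D}^{\,\pitchfork}=q=\codim {\cal D}$ and the two distributions are $g$-orthogonal, a dimension count forces ${\cal D}^{\,\pitchfork}={\cal D}^\bot$, so the mean curvature vector of ${\cal D}^{\,\pitchfork}$ is exactly $H^\bot$ and harmonicity reads $H^\bot=0$. Once $g$ is recognized as a \emph{compatible} metric in ${\rm Riem}(M,{\cal D},{\bf T})$, Lemma~\ref{L-02a} becomes available and formula~\eqref{eq1:pw2013} gives $\eta=(-1)^{q-1}(H^\bot)^\flat=0$, whence $\eta\wedge(d\eta)^q=0$ and $\gv(\omega,{\bf T})=0$, contradicting the hypothesis. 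This is precisely the content of Proposition~\ref{P-Hbot-0}, which I would cite directly at this point.

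The step I expect to be the crux is the passage from the metric furnished by Definition~\ref{defi:taut}(1) to a compatible one. That definition only asserts the existence of \emph{some} metric making ${\cal D}^{\,\pitchfork}$ harmonic and orthogonal to ${\cal D}$; it does not say that the prescribed frame $\{T_i\}$ is $g$-orthonormal, and one cannot in general rescale $g$ along ${\cal D}^{\,\pitchfork}$ to normalize $\{T_i\}$ without disturbing the mean curvature in the ${\cal D}$-directions. To bridge this gap I would lean on two facts already established: the mean curvature vector $H^\bot$ of the distribution ${\cal D}^{\,\pitchfork}$ is intrinsic, independent of the chosen orthonormal frame, and by Proposition~\ref{L-eta} the one-form $\eta=\iota_{\,\bf T}\,d\omega$ does not depend on the compatible metric at all. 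Working instead with a $g$-orthonormal frame $\{e_i\}$ of ${\cal D}^{\,\pitchfork}={\cal D}^\bot$ makes $g$ compatible for the pair $({\cal D},e_1\wedge\ldots\wedge e_q)$ and, via Lemma~\ref{L-02a}, forces the associated one-form to vanish; the remaining task is to transport this vanishing back to the given pair $(\omega,{\bf T})$ through the frame change of case~(i). It is exactly here that the integrability of ${\cal D}^{\,\pitchfork}$ enters — since $[T_i,T_j]\in{\cal D}^{\,\pitchfork}$ yields $\eta([T_i,T_j])=0$, one can control the volume-normalization factor $\det C$ that appears in case~(i). Making this last reconciliation airtight is the only genuinely delicate point; everything else reduces to a direct appeal to Theorem~\ref{theo:taut} and Proposition~\ref{P-Hbot-0}.
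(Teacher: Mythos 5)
Your reconstruction coincides with the paper's own argument: the intended proof is exactly the sentence preceding the corollary, namely Theorem~\ref{theo:taut} to convert topological tautness into geometric tautness, followed by Proposition~\ref{P-Hbot-0} to exclude the latter. You went further than the paper by isolating the weak point: Definition~\ref{defi:taut}(1) supplies \emph{some} metric making ${\cal D}^{\,\pitchfork}$ harmonic and orthogonal to ${\cal D}$, while Proposition~\ref{P-Hbot-0} (via Lemma~\ref{L-02a}) applies only to \emph{compatible} metrics $g\in{\rm Riem}(M,{\cal D},{\bf T})$, for which the prescribed $T_i$ are orthonormal. The paper passes over this silently; you named it.

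However, the reconciliation you hope for cannot be carried out, so what you call ``the only genuinely delicate point'' is a genuine gap, in your proposal and in the paper alike. Passing to a $g$-orthonormal frame $e_i=C_i^jT_j$ of ${\cal D}^{\,\pitchfork}={\cal D}^\bot$ and to $\tilde\omega=(\det C)^{-1}\omega$, Lemma~\ref{L-02a} indeed gives $\tilde\eta=0$; but to transport this back to $(\omega,{\bf T})$ you need the Proposition of Section~\ref{sec:1-1} on changes of type (i), which is proved only under the hypothesis $T_i(\det C)=0$, and nothing in the corollary forces this. Integrability of ${\cal D}^{\,\pitchfork}$ does not help: $\eta$ vanishes on ${\cal D}^{\,\pitchfork}$ for \emph{any} pair, since $\eta(T_j)=d\omega(T_1,\ldots,T_q,T_j)=0$ by antisymmetry, so the identity $\eta([T_i,T_j])=0$ carries no information about $\det C$. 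In fact, when $\tilde\eta=0$ a direct computation gives that $\eta$ equals $(-1)^q\,d\log\det C$ on ${\cal D}$ and $0$ on ${\cal D}^{\,\pitchfork}$, and the resulting integral need not vanish. Concretely, let $M=S^3$ with the standard contact form $\alpha$ and Reeb field $R$, choose $f$ with $R(f)\not\equiv0$, and set $T=e^{-f}R$, $\omega=e^{f}\alpha$, so that $\iota_{\,T}\,\omega=1$ and ${\cal D}^{\,\pitchfork}={\rm span}(R)$ is integrable. The pair $(\ker\alpha,{\rm span}(R))$ is geometrically taut (for the round metric the Hopf fibres are unit-speed geodesics orthogonal to $\ker\alpha$), hence topologically taut by Theorem~\ref{theo:taut}; yet $\eta=\iota_{\,T}\,d\omega=R(f)\,\alpha-df$, and since $df\wedge d(R(f)\,\alpha)$ is exact,
\[
 \gv(\omega,T)=\int_{S^3}R(f)^2\,\alpha\wedge d\alpha\neq0 .
\]
Thus $\gv(\omega,{\bf T})$ is not an invariant of the pair of distributions (the paper's own case-(i) Proposition already hints at this), and it cannot obstruct the frame-independent tautness of Definition~\ref{defi:taut}. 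What your argument (and the paper's) actually establishes is only Proposition~\ref{P-Hbot-0}: no metric making the \emph{given} $T_i$ orthonormal and orthogonal to ${\cal D}$ renders ${\cal D}^\bot$ harmonic. To salvage the corollary one must either build this frame-adaptedness into the definition of tautness or replace the hypothesis $\gv(\omega,{\bf T})\neq0$ by one invariant under all changes of frame of ${\cal D}^{\,\pitchfork}$.
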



\begin{example}[Geodesible vector fields on 3-manifolds]\rm
Recall the Sullivan's \cite{su2} characterization of geodesic fields, see also the survey in \cite{gl}:
 {Let $T$ be a
 nonsingular vector field on a smooth manifold $M$.
Then, there is a Riemannian metric making the orbits of $T$ geodesics if and only if no nonzero foliation cycle
for $T$ can be arbitrarily well approximated by the boundary of a 2-chain tangent to $N$}.
 According to Definition~\ref{defi:taut}, a~pair $({\cal D}=\ker\omega,T)$,
 where
 $\omega$ is a one-form on $M^3$ such that $\omega(T)=1$,
  is

 (1) {\it geometrically taut} if $T$ is geodesic and orthogonal to
 $\ker\omega$ for some Riemannian metric $g$ on~$M$;

 (2) {\it topologically taut} if
 $C^{\,\pitchfork}$
 intersects trivially the smallest
 linear subspace $P^{\,\pitchfork}$ containing $B^{\,\pitchfork}$ and
 all Dirac currents $z_{\,v}$ with $v\in{\cal D}_x$ and $x\in M$.
 The cone $C^{\,\pitchfork}$ is generated by Dirac current $z_{\,T}$,
and $B^{\,\pitchfork}$ is the {closed linear subspace} generated by boundaries
of Dirac currents $z_v\in D_{2}$, where $v=w\wedge T$.
By Theorem~\ref{theo:taut},
a pair $({\cal D}, T)$ on a closed manifold $M^{3}$ is geometrically taut if and only if it is  topologically taut.
 By Corollary~\ref{cor:taut},
if $\gv(\omega, {T})\ne 0$, then the pair $({\cal D},\, T)$ is not taut.
\end{example}

\section{Variations of distributions}
\label{sec:1-2}

The Stokes Theorem states that
 $\int_{M} d\beta = \int_{\partial M} \beta$,
when $\beta$ is a $(\dim M -1)$-form on $M$. Thus, $\int_{M} d\beta =0$, when $M$ is closed;
this is also true if $M$ is open and $\beta$ is supported in a relatively compact domain $G$.
 The Stokes Theorem on a closed Riemannian manifold $(M,g)$ with the volume form ${\rm d}V_g$ and $\beta=X^\flat$ yields the Divergence Theorem,
 $\int_{M} \Div X \,{\rm d}V_g = 0$.


\begin{lemma}
\label{L-sing1}
If $(k-1)(p-1)\ge1$ and $\beta$ is a $(\dim M  - 1)$-form on $M\setminus\Sigma$ with metric $g$ such that $\int_M\|\beta\|^p\,{\rm d}V_g < \infty$, then $\int_M d\beta = 0$.
\end{lemma}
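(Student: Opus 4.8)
The plan is to excise the singular set $\Sigma$ by a family of smooth cutoffs and to reduce the statement to a shell estimate governed exactly by the exponent condition $(k-1)(p-1)\ge1$. Let $\rho$ denote the distance to $\Sigma$ with respect to $g$; since $\Sigma$ is a finite disjoint union of closed submanifolds, the tubular neighborhood theorem guarantees that $\rho$ is smooth on a punctured neighborhood of $\Sigma$ and that the tube $\{\rho\le r\}$ has volume $O(r^{k})$, because each component has codimension $\ge k$. Fix a smooth $\phi:[0,\infty)\to[0,1]$ with $\phi\equiv0$ on $[0,1]$, $\phi\equiv1$ on $[2,\infty)$, and set $u_\eps=\phi(\rho/\eps)$. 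Then $u_\eps$ vanishes near $\Sigma$, equals $1$ outside the shell $S_\eps=\{\eps\le\rho\le2\eps\}$, and $\|du_\eps\|\le C/\eps$ with support in $S_\eps$.

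Since $u_\eps\beta$ is a smooth $(\dim M-1)$-form vanishing in a neighborhood of $\Sigma$ (and supported in the relatively compact domain $G$ in the open case), Stokes' Theorem applies on $M$ and gives $\int_M d(u_\eps\beta)=0$. Expanding $d(u_\eps\beta)=du_\eps\wedge\beta+u_\eps\,d\beta$ yields
\begin{equation*}
 \int_M u_\eps\,d\beta=-\int_{S_\eps} du_\eps\wedge\beta.
\end{equation*}
It then remains to show that the right-hand side tends to $0$ as $\eps\to0$ and that the left-hand side converges to $\int_M d\beta$.

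For the right-hand side I would estimate, using $\|du_\eps\|\le C/\eps$ and Hölder's inequality with exponents $p$ and $p/(p-1)$,
\begin{equation*}
 \Big|\int_{S_\eps}du_\eps\wedge\beta\Big|
 \le\frac{C}{\eps}\int_{S_\eps}\|\beta\|\,{\rm d}V_g
 \le\frac{C}{\eps}\Big(\int_{S_\eps}\|\beta\|^p\,{\rm d}V_g\Big)^{1/p}\big({\rm Vol}(S_\eps)\big)^{1-1/p}.
\end{equation*}
Inserting ${\rm Vol}(S_\eps)=O(\eps^{k})$ produces the factor $\eps^{\,k(1-1/p)-1}$, whose exponent is nonnegative precisely when $k(p-1)\ge p$, i.e. when $(k-1)(p-1)\ge1$; thus this factor stays bounded as $\eps\to0$. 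The remaining factor $\big(\int_{S_\eps}\|\beta\|^p\,{\rm d}V_g\big)^{1/p}$ tends to $0$ by absolute continuity of the integral, since $\int_M\|\beta\|^p\,{\rm d}V_g<\infty$ and the shells $S_\eps$ shrink to the measure-zero set $\Sigma$. Hence the right-hand side vanishes in the limit.

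The main obstacle is the passage to the limit on the left-hand side, where I would use that $u_\eps\uparrow1$ pointwise on $M\setminus\Sigma$ together with integrability of $d\beta$ (in the applications $d\beta$ is an $L^1$ density, or one reads $\int_M d\beta$ as the improper integral $\lim_{\eps\to0}\int_{\{\rho\ge\eps\}}d\beta$) to invoke dominated convergence and conclude $\int_M u_\eps\,d\beta\to\int_M d\beta$. Combining the two limits gives $\int_M d\beta=0$. Care is also needed in the geometric input—smoothness of $\rho$ off $\Sigma$ and the sharp tube-volume bound $O(r^{k})$—which relies on the codimension hypothesis and is exactly where the constant $k$ enters the estimate; note also that $(k-1)(p-1)\ge1$ forces $p>1$, so that the Hölder step above is legitimate.
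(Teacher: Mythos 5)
Your proof is correct and self-contained, but it takes a genuinely different route from the paper's. The paper disposes of this lemma in one line: it sets $X$ to be the vector field dual to $\beta$ via the volume form, $\beta=\iota_X\,{\rm d}V_g$, so that $\|X\|=\|\beta\|$ and $(\Div X)\,{\rm d}V_g=d\beta$, and then invokes the singular divergence theorem of \cite[Lemma~2]{pw1}, which is stated under exactly the hypotheses $(k-1)(p-1)\ge1$ and $\int_M\|X\|^p\,{\rm d}V_g<\infty$. You instead reprove that external result directly on the level of forms: cutoffs $u_\eps=\phi(\rho/\eps)$, Stokes' theorem for $u_\eps\beta$, the H\"older estimate on the shell $S_\eps$ with the tube volume bound ${\rm Vol}(S_\eps)=O(\eps^k)$ coming from $\codim\Sigma\ge k$, and the observation that the resulting exponent $k(1-1/p)-1$ is nonnegative precisely when $(k-1)(p-1)\ge1$. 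This is essentially the mechanism behind the cited lemma (going back to \cite{br-w} for isolated singularities), so what your version buys is transparency---it shows exactly where the codimension and the exponent condition enter, and that the condition is used sharply---at the cost of redoing work the paper outsources to the literature. One point you handle correctly and should keep explicit: the limit $\int_M u_\eps\,d\beta\to\int_M d\beta$ requires either $d\beta\in L^1$ or reading $\int_M d\beta$ as an improper integral; this is not a defect of your argument relative to the paper, since the same integrability is implicitly assumed both for the statement $\int_M d\beta=0$ to have meaning and in the hypotheses of \cite[Lemma~2]{pw1}.
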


\begin{proof}
This follows directly from \cite[Lemma~2]{pw1} applied to a vector field $X=\beta^\sharp$ satisfying $\beta = \iota_X{\rm d}V_g$ and the equality $(\Div X)\,{\rm d}V_g=d\beta$.
\end{proof}

For variable pairs $(\omega_t,{\bf T}_t)$ or metrics $g_t$, denote by $\,^{\,\centerdot}{ }\,$ the $t$-derivative at $t=0$ of any $t$-dependent quantity on~$M$.
For $q =2$ we get
$(T_1\wedge T_2)^{\centerdot} = \dot T_1\wedge T_2 + T_1\wedge \dot T_2
= \dot T_1\wedge \widehat{\bf T}_1 -\dot T_2\wedge \widehat{\bf T}_2$.
Thus, in general,
 $\dot{\bf T} = \sum\nolimits_{i}(-1)^{i-1} \dot T_i\wedge \widehat{\bf T}_i$.
If
\begin{equation}\label{E-omega-T-t}
 \iota_{\,{\bf T}_t}\,\omega_t \equiv 1
\end{equation}
then we have
\begin{equation}\label{E-gv-omegat-dot}
 \dot\eta=\iota_{\,{\bf T}}\,d\dot\omega + \iota_{\,\dot{\bf T}}\,d\omega
 = (-1)^{q-1} ({\cal L}_{\,\bf T}\,\dot\omega + {\cal L}_{\,\dot{\bf T}}\,\omega).
\end{equation}

\begin{lemma}
 Let $(\omega_t,{\bf T}_t)\ (|t|\le\eps)$ be a smooth family of $q$-forms and
$q$-vector on $M^{2q+1}\setminus\Sigma$ satisfying \eqref{E-omega-T} and \eqref{E-omega-T-t}, and let ${\cal D}_t=\ker\omega_t$.
Suppose that $g\in{\rm Riem}(M,{\cal D},{\bf T})$~and
\begin{equation}\label{E-L1-sing}
 \int_M\,\|\dot\eta\wedge\eta\wedge (d\eta)^{q-1}\|^p\,{\rm d}V_g<\infty
\end{equation}
for some $p$ such that $(k-1)(p-1)\ge1$. Then
\begin{equation}\label{E-gv-omegat}
 \overset{\centerdot}\gv(\omega,{\bf T}) =  (q+1)\int_M \dot\eta\wedge (d\eta)^q\,.
\end{equation}
Moreover, if the variation satisfies
\begin{equation}\label{E-L1-sing-second}
 \int_M\,\|\ddot\eta\wedge\eta\wedge(d\eta)^{q-1}+2(q-1)\,\dot\eta\wedge d\dot\eta\wedge(d\eta)^{q-2}\|^p\,{\rm d}V_g<\infty,
\end{equation}
then
\begin{equation}\label{E-gv-omegatt}
 \overset{\centerdot\centerdot}\gv(\omega,T) = (q+1)\int_M (\ddot\eta\wedge (d\eta)^q +{q}\,\dot\eta\wedge d\dot\eta\wedge(d\eta)^{q-1} ).
\end{equation}
\end{lemma}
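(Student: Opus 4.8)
The plan is to differentiate the integrand of \eqref{E-gv-invar} directly and then discard the terms carrying a derivative of $d\eta$ by integration by parts. The organizing observation is that $d\eta$ is a $2$-form, hence of even degree, so it commutes (in the graded sense) with every factor below; in particular the $q$ summands of $\frac{d}{dt}(d\eta_t)^q$ all coincide, giving $\frac{d}{dt}\big|_{t=0}(d\eta_t)^q = q\,d\dot\eta\wedge(d\eta)^{q-1}$. Applying the Leibniz rule to $\gv(t)=\int_M\eta_t\wedge(d\eta_t)^q$, with $\dot\eta$ as in \eqref{E-gv-omegat-dot}, I obtain
\[
 \overset{\centerdot}\gv(\omega,{\bf T}) = \int_M \dot\eta\wedge(d\eta)^q + q\int_M \eta\wedge d\dot\eta\wedge(d\eta)^{q-1}.
\]

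To convert the second integral I would integrate by parts. Writing $\beta=\eta\wedge\dot\eta\wedge(d\eta)^{q-1}$, a $2q$-form, and using $d^2=0$ (which kills $d\,(d\eta)^{q-1}$) together with the evenness of $d\eta$ to rearrange $d\eta\wedge\dot\eta=\dot\eta\wedge d\eta$, one finds
\[
 d\beta = \dot\eta\wedge(d\eta)^q - \eta\wedge d\dot\eta\wedge(d\eta)^{q-1}.
\]
Now $\int_M d\beta=0$: on a closed $M$ this is Stokes' Theorem, while in the singular/open setting it is exactly Lemma~\ref{L-sing1}, whose hypothesis is the finiteness \eqref{E-L1-sing} of the $L^p$-norm of $\beta$. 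Hence $\int_M\eta\wedge d\dot\eta\wedge(d\eta)^{q-1}=\int_M\dot\eta\wedge(d\eta)^q$, and substituting yields \eqref{E-gv-omegat}.

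For the second variation I would differentiate the (un-simplified) first-variation expression once more at $t=0$, again using that $d\eta$ is even. This produces four types of terms: $\int_M\ddot\eta\wedge(d\eta)^q$; the mixed term $\int_M\dot\eta\wedge d\dot\eta\wedge(d\eta)^{q-1}$ (with total coefficient $2q$); the term $q\int_M\eta\wedge d\ddot\eta\wedge(d\eta)^{q-1}$; and $q(q-1)\int_M\eta\wedge d\dot\eta\wedge d\dot\eta\wedge(d\eta)^{q-2}$. The last two still carry a factor $\eta$ together with a derivative, and I would remove them by the same trick: with $\beta_1=\eta\wedge\ddot\eta\wedge(d\eta)^{q-1}$ one gets $\int_M\eta\wedge d\ddot\eta\wedge(d\eta)^{q-1}=\int_M\ddot\eta\wedge(d\eta)^q$, and with $\gamma=\eta\wedge\dot\eta\wedge d\dot\eta\wedge(d\eta)^{q-2}$ one gets $\int_M\eta\wedge d\dot\eta\wedge d\dot\eta\wedge(d\eta)^{q-2}=\int_M\dot\eta\wedge d\dot\eta\wedge(d\eta)^{q-1}$. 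Both exact-form integrals vanish by Lemma~\ref{L-sing1} under the convergence hypothesis \eqref{E-L1-sing-second}, which bounds the relevant combination of $2q$-forms. Collecting the $\ddot\eta$-terms (coefficient $1+q$) and the $\dot\eta\wedge d\dot\eta$-terms (coefficient $2q+q(q-1)=q(q+1)$) gives \eqref{E-gv-omegatt}.

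The routine part is the bookkeeping of graded signs, which is tamed throughout by the evenness of $d\eta$. The one genuine obstacle is justifying the vanishing of the exact-form integrals in the presence of the singularity set $\Sigma$: this is precisely why the $L^p$-bounds \eqref{E-L1-sing} and \eqref{E-L1-sing-second} are imposed, so that Lemma~\ref{L-sing1}, and hence $\int_M d(\cdot)=0$, applies to exactly those $2q$-forms produced by the integrations by parts.
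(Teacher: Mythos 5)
Your proposal is correct and follows essentially the same route as the paper: the paper Taylor-expands $\eta_t\wedge(d\eta_t)^q$, splits the $t$- and $t^2$-coefficients into the desired terms plus exact forms, and kills the exact forms via Lemma~\ref{L-sing1} under \eqref{E-L1-sing} and \eqref{E-L1-sing-second} --- which is precisely your Leibniz-plus-integration-by-parts computation in a different packaging. One small precision: since \eqref{E-L1-sing-second} bounds only a fixed linear combination of the two potentials $\eta\wedge\ddot\eta\wedge(d\eta)^{q-1}$ and $\eta\wedge\dot\eta\wedge d\dot\eta\wedge(d\eta)^{q-2}$, you should apply Lemma~\ref{L-sing1} once to the single combined potential arising from your two integrations by parts (which suffices, as only the sum of the two exact integrals enters the final formula), rather than asserting that each exact-form integral vanishes separately.
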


\begin{proof}
We use the Taylor expansions
 $\omega_t = \omega + \dot\omega\,t +\ddot\omega(t^2/2) +O(t^3)$
 and
 $T_i(t) = T_i + \dot T_i\,t + \ddot T_i(t^2/2) +O(t^3)$.
Let $\eta_t =  \iota_{\,{\bf T}_t}\,d\omega_t $.
 Write $\eta_t = \eta + \dot\eta\,t + \ddot\eta(t^2/2) +O(t^3)$.
By the above and the use of
\begin{eqnarray*}
  d(\dot\eta\wedge\eta\wedge(d\eta)^{q-1}) \eq d\dot\eta\wedge\eta\wedge(d\eta)^{q-1} -\dot\eta\wedge d(\eta\wedge(d\eta)^{q-1})\\
 \eq d\dot\eta\wedge\eta\wedge(d\eta)^{q-1} -\dot\eta\wedge (d\eta)^{q},
\end{eqnarray*}
we have
\begin{eqnarray*}
 \eta_t\wedge(d\eta_t)^q \eq\eta\wedge(d\eta)^q + \big((q+1)\dot\eta\wedge(d\eta)^q + d(\eta\wedge(d\eta)^{q-1}\wedge\dot\eta)\big)t \\
 \plus \frac{q+1}2\,\big(\ddot\eta\wedge (d\eta)^q  + q\,\dot\eta\wedge d\dot\eta\wedge(d\eta)^{q-1}\\
 \plus d(\frac{q}2\,\ddot\eta\wedge\eta\wedge (d\eta)^{q-1} +q(q-1)\,\dot\eta\wedge d\dot\eta\wedge(d\eta)^{q-2})\big)\,t^2
  + O(t^3).
\end{eqnarray*}
Let
 $\gv (t) = \int_M \eta_t\wedge (d\eta_t)^q$,
and write $\gv (t) = \gv + t\,\overset{\centerdot}\gv +(t^2/2)\overset{\centerdot\centerdot}\gv + O(t^3)$.
By~the Stokes Theorem, using Lemma~\ref{L-sing1}, \eqref{E-L1-sing} and \eqref{E-L1-sing-second}, we get~\eqref{E-gv-omegat} and \eqref{E-gv-omegatt}.
\end{proof}

From \eqref{E-gv-omegat} and Lemma~\ref{L-02a} we obtain the following.

\begin{proposition}
Let ${\cal D}=\ker\omega$, $g\in{\rm Riem}(M,{\cal D},{\bf T})$, and $(d\eta)^q=0$,
e.g. the normal distribution ${\cal D}^\bot$  harmonic. Then $(\omega,{\bf T})$ is a critical point for
$\gv$
with respect to all variations obeying  \eqref{E-omega-T-t} and \eqref{E-L1-sing}.
\end{proposition}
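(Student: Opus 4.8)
The plan is to read the result directly off the first-variation formula \eqref{E-gv-omegat}, which already builds in the admissibility conditions \eqref{E-omega-T-t} and \eqref{E-L1-sing}: the latter is exactly the hypothesis that permits discarding the exact differential via Lemma~\ref{L-sing1} when \eqref{E-gv-omegat} is derived. By definition $(\omega,{\bf T})$ is a critical point of $\gv$ when $\overset{\centerdot}\gv(\omega,{\bf T})=0$ for every admissible variation, so it suffices to show that the right-hand side of \eqref{E-gv-omegat} vanishes in all admissible directions.

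First I would note that the integrand $\dot\eta\wedge(d\eta)^q$ contains the factor $(d\eta)^q$ evaluated at the base point $t=0$, which does not depend on the variation field $\dot\eta$ at all. Hence, under the hypothesis $(d\eta)^q=0$, the integrand vanishes pointwise on $M\setminus\Sigma$ for every admissible $(\omega_t,{\bf T}_t)$, and therefore $\overset{\centerdot}\gv(\omega,{\bf T})=(q+1)\int_M\dot\eta\wedge(d\eta)^q=0$ irrespective of the direction of variation. This is precisely criticality.

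It then remains to justify the parenthetical example. Here I would appeal to Lemma~\ref{L-02a}: if ${\cal D}^\bot$ is harmonic, i.e.\ $H^\bot=0$ (Definition~\ref{D-t-umb}), then \eqref{eq1:pw2013} gives $\eta=(-1)^{q-1}(H^\bot)^\flat=0$, so that $d\eta=0$ and a fortiori $(d\eta)^q=0$. Thus harmonicity of ${\cal D}^\bot$ is a special---indeed stronger---instance of the stated hypothesis.

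I expect no genuine obstacle once \eqref{E-gv-omegat} is available: the whole argument reduces to the single observation that the degenerate factor $(d\eta)^q$ annihilates the integrand independently of $\dot\eta$, so that the potentially complicated expression for $\dot\eta$ in \eqref{E-gv-omegat-dot} never needs to be analyzed. The only point requiring attention is conceptual rather than computational, namely confirming that \eqref{E-gv-omegat} indeed holds under exactly the admissibility conditions assumed in the statement.
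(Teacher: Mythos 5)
Your proposal is correct and coincides with the paper's own (very brief) argument: the paper derives this proposition exactly "from \eqref{E-gv-omegat} and Lemma~\ref{L-02a}", i.e., the first-variation formula kills the integral once $(d\eta)^q=0$, and harmonicity of ${\cal D}^\bot$ gives $\eta=(-1)^{q-1}(H^\bot)^\flat=0$, hence $(d\eta)^q=0$ as a special case. Nothing is missing; your observation that \eqref{E-L1-sing} is precisely what licenses discarding the exact term via Lemma~\ref{L-sing1} is the same role it plays in the paper.
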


We will recalculate a general formula \eqref{E-gv-omegatt} (for the second variation of our Godbillon--Vey invariant)
at critical points of our Godbillon--Vey invariant $\gv$.

\begin{lemma}
The following bilinear form on $M\setminus\Sigma$ depending on a $q$-vector ${\bf T}$ and one-form $\eta$:
\begin{equation}\label{E-ITform}
 J(\alpha,\beta)=\int_M
 {\cal L}_{\,\bf T}(
 {\cal L}_{\,\bf T}\,d\,\alpha\wedge(d\eta)^{q-1})\wedge\beta,
\end{equation}
is symmetric on the space of $q$-forms $\alpha,\beta$ on $M$ satisfying
\begin{equation}\label{E-ITform-cond}
 \int_M \|\gamma_1+(-1)^q \gamma_2+(-1)^{q-1} \gamma_3\|^p\,{\rm d}V_g<\infty
\end{equation}
for some $p$ such that $(k-1)(p-1)\ge1$,
where
\begin{eqnarray*}
 \gamma_1 \eq \iota_{\,\bf T}({\cal L}_{\,\bf T}\,d\alpha\wedge(d\eta)^{q-1})\wedge\beta,\quad
 \gamma_2= \iota_{\,\bf T}\,d\alpha\wedge(d\eta)^{q-1}\wedge\iota_{\,\bf T}\,d\beta,\\
 \gamma_3 \eq \alpha\wedge\iota_{\,\bf T}\,({\cal L}_{\,\bf T}\,d\beta\wedge(d\eta)^{q-1}).
\end{eqnarray*}
\end{lemma}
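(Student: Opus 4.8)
The plan is to exploit the fact that the inner form $A_\alpha := {\cal L}_{\,\bf T}\,d\alpha\wedge(d\eta)^{q-1}$ is \emph{exact}. From the definition ${\cal L}_{\,\bf T} = d\,\iota_{\,\bf T} - (-1)^q\iota_{\,\bf T}\,d$ together with $d^2=0$ one has ${\cal L}_{\,\bf T}\,d\alpha = d(\iota_{\,\bf T}\,d\alpha)$, and since $(d\eta)^{q-1}$ is closed, $A_\alpha = d\big(\iota_{\,\bf T}\,d\alpha\wedge(d\eta)^{q-1}\big)$ is itself closed; consequently the outer Lie derivative collapses to ${\cal L}_{\,\bf T}\,A_\alpha = d\,\iota_{\,\bf T}\,A_\alpha$. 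Hence the integrand of $J(\alpha,\beta)$ in \eqref{E-ITform} is $d(\iota_{\,\bf T}\,A_\alpha)\wedge\beta$, a form to which Stokes-type integration by parts applies.

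First I would integrate by parts once. Setting $\gamma_1=\iota_{\,\bf T}\,A_\alpha\wedge\beta$ and using $\deg\iota_{\,\bf T}\,A_\alpha=q$, the Leibniz rule gives $d(\iota_{\,\bf T}\,A_\alpha)\wedge\beta = d\gamma_1-(-1)^q\,\iota_{\,\bf T}\,A_\alpha\wedge d\beta$. Next I would move the contraction off the first factor: since $A_\alpha\wedge d\beta$ is a $(3q{+}1)$-form on $M^{2q+1}$ with $2q+(q+1)>\dim M+q-1$, Lemma~\ref{L-iota-ab} yields $\iota_{\,\bf T}\,A_\alpha\wedge d\beta = (-1)^q A_\alpha\wedge\iota_{\,\bf T}\,d\beta$. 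Abbreviating $p_\alpha=\iota_{\,\bf T}\,d\alpha$ (so ${\cal L}_{\,\bf T}\,d\alpha=dp_\alpha$) I obtain, modulo the exact form $d\gamma_1$,
\[
J(\alpha,\beta) \;=\; -\!\int_M A_\alpha\wedge\iota_{\,\bf T}\,d\beta \;=\; -\!\int_M dp_\alpha\wedge(d\eta)^{q-1}\wedge p_\beta .
\]
Running the identical reduction with $\alpha$ and $\beta$ interchanged expresses $J(\beta,\alpha)$ as $-\int_M dp_\beta\wedge(d\eta)^{q-1}\wedge p_\alpha$, modulo the exact form $(-1)^q d\gamma_3$ with $\gamma_3=\alpha\wedge\iota_{\,\bf T}\,A_\beta$.

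The symmetry then follows from one more exterior derivative. Because $(d\eta)^{q-1}$ is closed, a direct computation gives
\[
d\big(p_\alpha\wedge(d\eta)^{q-1}\wedge p_\beta\big)=dp_\alpha\wedge(d\eta)^{q-1}\wedge p_\beta-dp_\beta\wedge(d\eta)^{q-1}\wedge p_\alpha,
\]
and the middle $2q$-form is exactly $\gamma_2$. Thus $J(\alpha,\beta)-J(\beta,\alpha)$ equals the integral of $d$ of an explicit $2q$-form assembled from $\gamma_1,\gamma_2,\gamma_3$, and collecting the three exact contributions with their signs reproduces precisely the combination $\gamma_1+(-1)^q\gamma_2+(-1)^{q-1}\gamma_3$ of \eqref{E-ITform-cond}. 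A single application of Lemma~\ref{L-sing1}, legitimate because \eqref{E-ITform-cond} bounds this combination in $L^p$ with $(k-1)(p-1)\ge1$, shows the resulting boundary integral over $M\smallsetminus\Sigma$ vanishes, whence $J(\alpha,\beta)=J(\beta,\alpha)$.

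The routine part is the Leibniz-rule bookkeeping in the three integrations by parts; the one genuinely delicate step is trading the $q$-fold contraction $\iota_{\,\bf T}$ across the wedge, since $\iota_{\,\bf T}$ is a composition of antiderivations rather than a derivation, which I resolve by invoking Lemma~\ref{L-iota-ab} in the top-degree regime where the undifferentiated product already vanishes. The main obstacle I anticipate is pinning down every sign so that the accumulated boundary term is \emph{exactly} the $2q$-form whose $L^p$-norm is controlled by \eqref{E-ITform-cond}: it is this precise match that ties the convergence hypothesis to the singular Stokes theorem and makes the argument valid on $M\smallsetminus\Sigma$ rather than merely formal.
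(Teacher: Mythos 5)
Your proposal is correct and is essentially the paper's own argument: both proofs rest on the exactness of ${\cal L}_{\,\bf T}\,d\alpha\wedge(d\eta)^{q-1}$ (so that the outer Lie derivative collapses to $d\,\iota_{\,\bf T}$), on Lemma~\ref{L-iota-ab} applied in top degree to trade $\iota_{\,\bf T}$ across wedge products, and on exhibiting $J(\alpha,\beta)-J(\beta,\alpha)$ as the integral of an exact $2q$-form assembled from $\gamma_1,\gamma_2,\gamma_3$, which is then killed by Lemma~\ref{L-sing1} under \eqref{E-ITform-cond}; your reorganization (reducing both $J(\alpha,\beta)$ and $J(\beta,\alpha)$ to the common midpoint $-\int_M dp_\alpha\wedge(d\eta)^{q-1}\wedge p_\beta$ and joining them through the identity for $d\gamma_2$) is purely cosmetic relative to the paper's single chain of equalities. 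One minor remark: your own correctly derived intermediate identities actually sum to the boundary term $d\bigl(\gamma_1-\gamma_2+(-1)^{q-1}\gamma_3\bigr)$, i.e.\ coefficient $-1$ on $\gamma_2$ rather than $(-1)^q$, so your closing claim that the collection reproduces \eqref{E-ITform-cond} ``precisely'' holds literally only for odd $q$ --- but the paper's proof asserts the same combination, so this sign slip (immaterial to the method) is shared with, and evidently inherited from, the source.
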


\proof This follows from the following calculation (using Lemmas~\ref{L-iota-ab} and \ref{L-sing1}):
\begin{eqnarray*}
 && {\cal L}_{\,\bf T}({\cal L}_{\,\bf T}\,d\,\alpha\wedge(d\eta)^{q-1})\wedge\beta
 = d\gamma_1 +(-1)^q d\iota_{\,\bf T}\,d\alpha\wedge(d\eta)^{q-1}\wedge\iota_{\,\bf T}\,d\beta\\
 && = d(\gamma_1 +(-1)^q\gamma_2) +(-1)^q d\alpha\wedge\iota_{\,\bf T}({\cal L}_{\,\bf T}\,d\beta\wedge(d\eta)^{q-1}) \\
 && = d(\gamma_1 +(-1)^q\gamma_2+(-1)^{q-1}\gamma_3)
 +{\cal L}_{\,\bf T}({\cal L}_{\,\bf T}\,d\,\beta\wedge(d\eta)^{q-1})\wedge\alpha.\quad\Box
\end{eqnarray*}

\begin{remark}\rm
The form \eqref{E-ITform} serves as the \textit{index form} for our variational problem for integrable distributions ${\cal D}$.
Let a Riemannian metric $g$ be compatible with $(\omega,{\bf T})$, and ${\rm d}V_g$ its volume form.
It defines Hodge star operator on the space of differential forms, $\star_r:\Lambda^r(M)\to\Lambda^{2q+1-r}(M)$, where $0\le r\le 2q+1$.
We~will not decorate $\star$  with $_r$ in what follows. If \eqref{E-ITform-cond} holds, the
self-adjoint Jacobi type operator
$D: \Lambda^q(M) \to \Lambda^q(M)$ corresponding to \eqref{E-ITform} is given by
\[
 D(\alpha)=\star\,{\cal L}_{\,\bf T}({\cal L}_{\,\bf T}\,d\,\alpha\wedge(d\eta)^{q-1}).
\]
\end{remark}

\begin{theorem}\label{T-main1}
Suppose that ${\cal D}=\ker\omega$ be integrable on $M^{2q+1}\setminus\Sigma$, and let $g\in{\rm Riem}(M,{\cal D},{\bf T})$.
Then

\noindent\ \
{\bf (i)} $(\omega,{\bf T})$ is critical for the functional \eqref{E-gv-invar}
with respect to all variations  obeying \eqref{E-omega-T-t}~and
\begin{equation}\label{E-cond1-Sigma}
 \int_M\,\|
 \eta\wedge(d\eta)^{q-1}\wedge\dot\eta -(q+1)\,\dot\omega\wedge\iota_{\,\bf T}\,(d\eta)^q
 \|^p\,{\rm d}V_g<\infty
\end{equation}
for some $p$ such that $(k-1)(p-1)\ge1$,
if and only if the following
holds:
\begin{equation}\label{E-cond0}
 \iota_{\,\bf T}{\cal L}_{\,\bf T}(d\eta)^q =0.
\end{equation}
\ \ {\bf (ii)} a critical pair $(\omega,{\bf T})$ is extremal for \eqref{E-gv-invar} for all variations
obeying \eqref{E-omega-T-t}, \eqref{E-cond1-Sigma}~and
\begin{equation*}
 \int_M\,\| \dot\omega\wedge \iota_{\,\bf T}(
 {\cal L}_{\,\bf T}\,d\,\dot\omega\wedge(d\eta)^{q-1})
 +\ddot\omega\wedge\iota_{\,\bf T}(d\eta)^q \|^p\,{\rm d}V_g<\infty,
\end{equation*}
if and only if the
bilinear~form $J$ in \eqref{E-ITform} is definite for all such variations.
\end{theorem}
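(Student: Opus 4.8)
The plan is to read off both statements directly from the first and second variation formulas \eqref{E-gv-omegat} and \eqref{E-gv-omegatt}, converting each into a pairing against the variation fields by integration by parts and then invoking nondegeneracy of the wedge pairing $\Lambda^q(M)\times\Lambda^{q+1}(M)\to\Lambda^{2q+1}(M)$. Throughout I would use integrability of ${\cal D}=\ker\omega$ in the form $d\omega=\omega\wedge\eta$, which upon applying $d$ gives $\omega\wedge d\eta=0$ and hence $\omega\wedge(d\eta)^q=0$, together with the fact that $(d\eta)^q$ is closed, so that ${\cal L}_{\,\bf T}(d\eta)^q=d\,\iota_{\,\bf T}(d\eta)^q$.

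For part {\bf (i)} I start from $\overset{\centerdot}\gv=(q+1)\int_M\dot\eta\wedge(d\eta)^q$ and substitute $\dot\eta=\iota_{\,\bf T}\,d\dot\omega+\iota_{\,\dot{\bf T}}\,d\omega$ from \eqref{E-gv-omegat-dot}. The $\dot\omega$-term I rewrite by Lemma~\ref{L-iota-ab} (legitimate since $\deg(d\dot\omega)+\deg(d\eta)^q=3q+1>\dim M+q-1$) as $\iota_{\,\bf T}\,d\dot\omega\wedge(d\eta)^q=(-1)^q\,d\dot\omega\wedge\iota_{\,\bf T}(d\eta)^q$, and integrate by parts; discarding the exact form by Lemma~\ref{L-sing1} (which is exactly what the convergence hypothesis \eqref{E-cond1-Sigma} guarantees, its bracket being precisely the $(2q)$-form whose differential is dropped) yields $\int_M\iota_{\,\bf T}\,d\dot\omega\wedge(d\eta)^q=-\int_M\dot\omega\wedge{\cal L}_{\,\bf T}(d\eta)^q$. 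The $\dot{\bf T}$-term is treated the same way, now invoking $d\omega=\omega\wedge\eta$ and the constraint obtained by differentiating \eqref{E-omega-T-t}, namely $\iota_{\,\dot{\bf T}}\,\omega=-\iota_{\,\bf T}\,\dot\omega$. The crucial point is that the two contributions combine so that, when $\overset{\centerdot}\gv$ is tested against all admissible pairs $(\dot\omega,\dot{\bf T})$, the surviving obstruction is not the full $(q+1)$-form ${\cal L}_{\,\bf T}(d\eta)^q$ but only its contraction $\iota_{\,\bf T}{\cal L}_{\,\bf T}(d\eta)^q$: the differentiated normalization removes exactly those components that the $\dot{\bf T}$-variation pairs away. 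Criticality for all admissible variations is therefore equivalent to \eqref{E-cond0}.

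For part {\bf (ii)} I start from \eqref{E-gv-omegatt} and expand the second-order term via $\ddot\eta=\iota_{\,\bf T}\,d\ddot\omega+2\,\iota_{\,\dot{\bf T}}\,d\dot\omega+\iota_{\,\ddot{\bf T}}\,d\omega$. The pieces carrying the second-order data $(\ddot\omega,\ddot{\bf T})$ are reduced exactly as in part (i), using the twice-differentiated constraint $\iota_{\,\bf T}\,\ddot\omega+2\,\iota_{\,\dot{\bf T}}\,\dot\omega+\iota_{\,\ddot{\bf T}}\,\omega=0$, to a pairing against $\iota_{\,\bf T}{\cal L}_{\,\bf T}(d\eta)^q$, which vanishes at a critical point by \eqref{E-cond0}. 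What remains is genuinely quadratic in the first-order data: the cross term $2\,\iota_{\,\dot{\bf T}}\,d\dot\omega$ together with $q\int_M\dot\eta\wedge d\dot\eta\wedge(d\eta)^{q-1}$. Writing $\dot\eta=(-1)^{q-1}({\cal L}_{\,\bf T}\,\dot\omega+{\cal L}_{\,\dot{\bf T}}\,\omega)$ and $d\dot\eta=-{\cal L}_{\,\bf T}\,d\dot\omega+\ldots$, and transferring the outer ${\cal L}_{\,\bf T}$ onto the second factor by the same integration by parts used in the symmetry proof of the index form, I identify this quadratic expression, up to a nonzero constant $c$, with $J(\dot\omega,\dot\omega)$ of \eqref{E-ITform}; the convergence hypothesis in (ii) is precisely the one ensuring the discarded forms integrate to zero by Lemma~\ref{L-sing1}. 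Hence at a critical pair $\overset{\centerdot\centerdot}\gv=c\,J(\dot\omega,\dot\omega)$ with $c\ne0$, so $(\omega,{\bf T})$ is extremal for all admissible variations if and only if $J$ is definite, as asserted.

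The main obstacle is the bookkeeping of the $\dot{\bf T}$- and $\ddot{\bf T}$-contributions: since $\iota_{\,\dot{\bf T}}$ is a sum of compositions of single-vector contractions and $\dot{\bf T}$ is not decomposable, expanding $\iota_{\,\dot{\bf T}}(\omega\wedge\eta)\wedge(d\eta)^q$ and recombining it with the $\dot\omega$-term and the differentiated normalization demands careful sign- and degree-tracking through Lemma~\ref{L-iota-ab}. This is exactly the step that produces the contraction $\iota_{\,\bf T}(\cdot)$ in \eqref{E-cond0} in place of the vanishing of ${\cal L}_{\,\bf T}(d\eta)^q$ itself, and the same mechanism, applied at second order, is what collapses the $(\ddot\omega,\ddot{\bf T})$-dependence at a critical point and isolates the index form $J$.
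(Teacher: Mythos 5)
Your scaffolding (first/second variation formulas \eqref{E-gv-omegat}, \eqref{E-gv-omegatt}, integration by parts via Lemma~\ref{L-iota-ab}, discarding exact forms via Lemma~\ref{L-sing1}) matches the paper's, but there is a genuine gap at the heart of part (i). After the constraint $\iota_{\,\bf T}\,\dot\omega+\iota_{\,\dot{\bf T}}\,\omega=0$ absorbs the $\dot{\bf T}$-contribution, the first variation equals $(q+1)\int_M\dot\omega\wedge\Omega$, where $\Omega$ is the $(q+1)$-form of \eqref{E-cond-Omega}; and here $\dot\omega$ is completely unrestricted, since any $q$-form $\dot\omega$ can be completed to an admissible pair (e.g.\ take $\dot T_1=-(\iota_{\,\bf T}\,\dot\omega)\,T_1$ and $\dot T_i=0$ for $i\ge2$, so that $\iota_{\,\dot{\bf T}}\,\omega=-\iota_{\,\bf T}\,\dot\omega$). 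Consequently criticality is equivalent to the vanishing of the \emph{full} form $\Omega$, not of its contraction: the normalization does not ``pair away'' any components, and the mechanism you propose -- that the $\dot{\bf T}$-bookkeeping is what replaces ${\cal L}_{\,\bf T}(d\eta)^q$ by $\iota_{\,\bf T}{\cal L}_{\,\bf T}(d\eta)^q$ -- is not what happens and cannot be made to work. The passage between $\Omega=0$ and \eqref{E-cond0} (the substantive direction being \eqref{E-cond0} $\Rightarrow\Omega=0$) is a separate geometric fact that holds only because ${\cal D}$ is integrable, and it is exactly the part of the paper's proof for which you have no counterpart: using \eqref{E-deta-q}, $d\omega=\omega\wedge\eta$, and an auxiliary $q$-form $\mu$ with $\omega\wedge\mu=0$, the paper proves that $\iota_{\,\bf X}\,\Omega=0$ for every $(q+1)$-vector ${\bf X}=X_1\wedge\ldots\wedge X_{q+1}$ having at least two factors in ${\cal D}$; since $\dim{\cal D}^{\,\pitchfork}=q$, any ${\bf X}$ with at most one ${\cal D}$-factor must contain all of $T_1,\ldots,T_q$, so the remaining components of $\Omega$ are exactly those recorded by $\iota_{\,\bf T}\,\Omega$, whence $\iota_{\,\bf T}\,\Omega=0\Leftrightarrow\Omega=0$. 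Without this claim the ``if'' direction of (i) is unproven.

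The same gap propagates into your part (ii): at a critical pair you invoke \eqref{E-cond0} to dispose of the $(\ddot\omega,\ddot{\bf T})$-terms, but what the computation actually needs is $\Omega=0$ (this is what the paper uses to show that $\ddot\eta\wedge(d\eta)^q$ is closed), and the equivalence of the two conditions is again the missing claim. Note also that the paper first reduces (ii) to the essential variations with $\dot{\bf T}=0$ and $\iota_{\,\bf T}\,\dot\omega=0$, observing that the other two types of variations do not change the functional when ${\cal D}$ is tangent to a foliation; you instead keep general $\dot{\bf T}$, $\ddot{\bf T}$ and only gesture at the resulting cross terms $\iota_{\,\dot{\bf T}}\,d\dot\omega$. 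Your final identification of the surviving quadratic term with $J(\dot\omega,\dot\omega)$, up to the positive factor $q(q+1)$, is correct in outline and agrees with the paper, but as written it rests on the two unproven reductions above.
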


\begin{proof}
{\bf (i)}. By integrability conditions, $d\omega=\omega\wedge\eta$,
applying $d$ yields $\omega\wedge d\eta=0$.
Hence, and assuming $\omega=\omega_1\wedge\ldots\wedge\omega_q$, we obtain
 $d\eta = \sum\nolimits_{i=1}^q\omega_i\wedge\alpha_i$
for some one-forms $\alpha_i$.
Thus,
\begin{equation}\label{E-deta-q}
 (d\eta)^q = \omega\wedge\alpha,\quad
  \alpha=\alpha_1\wedge\ldots\wedge\alpha_q.
\end{equation}
Using
\eqref{E-eta} and \eqref{E-gv-omegat-dot},
from \eqref{E-gv-omegat} we get
\begin{eqnarray}\label{E-gv-proof1}
\nonumber
 &&\hskip-17mm (q+1)^{-1} (\eta\wedge(d\eta)^q)^{\,\centerdot}  =  (\iota_{\,\dot{\bf T}}\,d\omega+\iota_{\,\bf T}\,d\dot\omega)\wedge (d\eta)^q+d\beta_1 \\
\nonumber
 && = \iota_{\,\dot{\bf T}}\,(\omega\wedge\eta)\wedge (d\eta)^q +\iota_{\,\bf T}\,d\dot\omega\wedge (d\eta)^q +d\beta_1\\
 && =  -(\iota_{\,\bf T}\,\dot\omega)\,\eta\wedge (d\eta)^q +\iota_{\,\bf T}\,d\dot\omega\wedge (d\eta)^q +d\beta_1,
\end{eqnarray}
where $\beta_1=(q+1)^{-1}\eta\wedge(d\eta)^{q-1}\wedge\dot\eta$. Here we used identity
$\omega^2=0$ for locally decomposable $q$-forms, $\iota_{\,\bf T}\,\dot\omega+\iota_{\,\dot{\bf T}}\,\omega= (\iota_{\,\bf T}\,\omega)^{\,\centerdot}=0$ and
 $\iota_{\,\dot{\bf T}}(\omega\wedge\eta)\wedge (d\eta)^q
 = -(\iota_{\,\bf T}\,\dot\omega)\,\eta\wedge (d\eta)^q$.
Calculating
\begin{eqnarray*}
 && d(\dot\omega\wedge \iota_{\,\bf T}\,(d\eta)^q) = d\dot\omega\wedge \iota_{\,\bf T}\,(d\eta)^q
 +(-1)^q\dot\omega\wedge{\cal L}_{\,\bf T}\,(d\eta)^q,
\end{eqnarray*}
we obtain, using Lemma~\ref{L-iota-ab},
\begin{equation}\label{E-gv-proof2}
 \iota_{\,\bf T}\,d\dot\omega\wedge (d\eta)^q
 = -d\dot\omega\wedge \iota_{\,\bf T}\,(d\eta)^q
 = (-1)^q\dot\omega\wedge{\cal L}_{\,\bf T}\,(d\eta)^q -d\beta_2,
\end{equation}
where $\beta_2=\dot\omega\wedge\iota_{\,\bf T}\,(d\eta)^q$.
Using $\iota_{\,{\bf T}}(\eta\wedge (d\eta)^q)=\eta\wedge \iota_{\,{\bf T}}(d\eta)^q$, see Lemma~\ref{L-iota-ab},
from \eqref{E-gv-proof1} and \eqref{E-gv-proof2} we get
\begin{eqnarray*}
 && (q+1)^{-1} (\eta\wedge(d\eta)^q)^{\,\centerdot}
  = (-1)^q (\iota_{\,\dot{\bf T}}\,\alpha)\,\omega^2\wedge\eta +\dot\omega\wedge{\cal L}_{\,\bf T}\,(d\eta)^q
 {-}(\iota_{\,\bf T}\,\dot\omega)\eta\wedge (d\eta)^q \\
 && +d(\beta_1-\beta_2)
 = \dot\omega \wedge(\eta\wedge \iota_{\,\bf T}(d\eta)^q +(-1)^q{\cal L}_{\,\bf T}(d\eta)^q)
  +d(\beta_1-\beta_2).
\end{eqnarray*}
For a critical pair $(\omega,{\bf T})$ with respect to all variations $\dot\omega$ obeying \eqref{E-omega-T-t},
the above, \eqref{E-cond1-Sigma} and Lemma~\ref{L-sing1} yield $\Omega=0$, where
\begin{equation}\label{E-cond-Omega}
 \Omega:=
 \eta\wedge \iota_{\,\bf T}(d\eta)^q +(-1)^q{\cal L}_{\,\bf T}(d\eta)^q.
\end{equation}
Applying $\iota_{\,\bf T}$ to the equality $\Omega=0$
yields \eqref{E-cond0}.

We claim that $\iota_{\,\bf X}\,\Omega=0$ for any ${\bf X}=X_1\wedge\ldots\wedge X_{q+1}\in\Lambda^{q+1}(TM)$ with~$X_1$ and $X_2$ tangent to ${\cal D}$.
Note that, since $\omega$ is a decomposable $q$-form obeying \eqref{E-omega-T}, $\iota_{\,\bf X}\,d\omega=0$
and $\iota_{\,\bf T}(\omega\wedge\alpha)=\alpha+\mu$,
where the $q$-form $\mu$ satisfies $\omega\wedge\mu=0$,
.
Using this and \eqref{E-deta-q}, we find
\begin{eqnarray*}
  \iota_{\,\bf X}\,\Omega \eq (
  \eta\wedge \iota_{\,\bf T}(\omega\wedge\alpha)
 +(-1)^q{\cal L}_{\,\bf T}
 \,(\omega\wedge\alpha))({\bf X})\\
 \eq
 \iota_{\,\bf X}\,(\eta\wedge(\alpha+\mu)) +(-1)^q\iota_{\,\bf X}\,d(\alpha+\mu).
\end{eqnarray*}
Notice that
\[
 0= d(\omega\wedge\mu)
 = d\omega\wedge\mu +(-1)^q\,\omega\wedge d\mu
 = \omega\wedge(\eta\wedge\mu +(-1)^q\,d\mu).
\]
By \eqref{E-deta-q}, we get
\[
 0= d((d\eta)^q) = d\omega\wedge\alpha +(-1)^q\,\omega\wedge d\alpha
 = \omega\wedge(\eta\wedge\alpha +(-1)^q\,d\alpha).
\]
Therefore,
\begin{eqnarray*}
 0 \eq d(\omega\wedge\mu+(d\eta)^q)({\bf T},{\bf X})
 =\omega\wedge(\eta\wedge(\alpha+\mu) +(-1)^q\,d(\alpha+\mu))({\bf T},{\bf X})\\
 \eq (\eta\wedge(\alpha+\mu) +(-1)^q d(\alpha+\,u))({\bf X})=
 \iota_{\,\bf X}\,\Omega.
\end{eqnarray*}
This proves the claim.
To test vanishing of $(q+1)$-form $\Omega$ on a basis $\{{\bf T},N,{\bf B}\}$ defined in Section~\ref{sec:1-1},
by the above, the remaining case is with at most one vector from ${\cal D}$ among components of ${\bf X}=X_1\wedge\ldots\wedge X_{q+1}$,
hence all $q$ vectors $T_i$, components of ${\bf T}$, participate in ${\bf X}$.
Thus, $\iota_{\,\bf T}\,\Omega=0\,\Leftrightarrow\,\Omega=0$.

\smallskip

{\bf (ii)} We have three independent cases for a pair $(\omega_t,{\bf T}_t)$ obeying \eqref{E-omega-T-t}:

\smallskip
{\bf (ii)}$_1$~$\dot{\bf T}$ is parallel to ${\bf T}$ and $\dot\omega$ is parallel to $\omega$,

\smallskip
{\bf (ii)}$_2$~$\dot T_i\in\mathfrak{X}_{\cal D}\ (1\le i\le q)$ and $\dot\omega=0$, hence $\iota_{\,\dot{\bf T}}\,\omega=0$,

\smallskip
{\bf (ii)}$_3$~$\dot{\bf T}=0$ and $\dot\omega$ is a one-form such that $\iota_{\,\bf T}\,\dot\omega=0$.

\noindent
When ${\cal D}$ is tangent to a foliation, variations {\bf (ii)}$_{1,2}$ do not change the functional \eqref{E-gv-invar},
and only variations
{\bf (ii)}$_3$ are essential.
For such variation $(\omega_t,{\bf T})$,
using $\ddot\eta=\iota_{\,\bf T}\,d\ddot\omega$ and
$\ddot{\bf T}=0$ with
 $\iota_{\,\bf T}\,\ddot\omega=0$,
for a critical pair $(\omega,{\bf T})$, using vanishing of the $q$-form $\Omega$ in \eqref{E-cond-Omega}, we get
\begin{eqnarray*}
 \ddot\eta\wedge (d\eta)^q \eq \iota_{\,\bf T}\,d\ddot\omega\wedge(d\eta)^q
 =(-1)^{q-1}\,d\ddot\omega\wedge\iota_{\,\bf T}(d\eta)^q\\
 \eq \ddot\omega\wedge
 {\cal L}_{\,\bf T}(d\eta)^q +(-1)^{q-1}\,d(\ddot\omega\wedge\iota_{\,\bf T}(d\eta)^q)\\
 \eq (-1)^{q-1}\,\ddot\omega\wedge \eta\wedge \iota_{\,\bf T}(d\eta)^q +(-1)^{q-1}\,d(\ddot\omega\wedge\iota_{\,\bf T}(d\eta)^q).
\end{eqnarray*}
Since
\begin{eqnarray*}
 \ddot\omega\wedge \eta\wedge \iota_{\,\bf T}(d\eta)^q({\bf T},N,{\bf B})
 \eq \|H^\bot\|\cdot\ddot\omega \wedge \iota_{\,\bf T}(d\eta)^q({\bf T},{\bf B})\\
 \eq \|H^\bot\|\cdot\iota_{\,\bf T}\,\ddot\omega\cdot(\iota_{\,\bf T}(d\eta)^q)({\bf B}) =0,
\end{eqnarray*}
the $(2q+1)$-form $\ddot\eta\wedge (d\eta)^q$ is closed.
Thus, compare the integrand in \eqref{E-gv-omegatt},
\begin{eqnarray*}
 && \dot\eta\wedge d\dot\eta\wedge (d\eta)^{q-1}
 = \iota_{\,\bf T}\,d\dot\omega\wedge
 {\cal L}_{\,\bf T}\,d\dot\omega\wedge (d\eta)^{q-1} \\
 && = \dot\omega\wedge
 {\cal L}_{\,\bf T}(
 {\cal L}_{\,\bf T}\,d\,\dot\omega\wedge(d\eta)^{q-1})
 +(-1)^{q-1}d(\dot\omega\wedge\iota_{\,\bf T}(
 {\cal L}_{\,\bf T}\,d\,\dot\omega\wedge(d\eta)^{q-1})).
\end{eqnarray*}
Finally,
\begin{eqnarray*}
 && 2(q+1)^{-1}({\eta\wedge (d\eta)^q})^{\,\centerdot\,\centerdot} = \dot\omega\wedge {\cal L}_{\,\bf T}(
 {\cal L}_{\,\bf T}\,d\,\dot\omega\wedge(d\eta)^{q-1})\\
 \plus (-1)^{q-1} d(
 \dot\omega\wedge \iota_{\,\bf T}(
 {\cal L}_{\,\bf T}\,d\,\dot\omega\wedge(d\eta)^{q-1}) +\ddot\omega\wedge\iota_{\,\bf T}(d\eta)^q).
\end{eqnarray*}
From the above and Lemma~\ref{L-sing1}, the claim follows.
\end{proof}

\begin{remark}\rm
For $q=1$, the bilinear form \eqref{E-ITform} reads as
\[
 J(\alpha,\beta)=\int_M({\cal L}_{\,T})^2 d\,\alpha\wedge\beta,
\]
and \eqref{E-cond0} takes the form $({\cal L}_{\,T})^3\omega=0$, for more details see \cite{rw-gv1}.
\end{remark}

\section{Integrability in average}
\label{sec:av}

In Section \ref{sec:1-2}, we considered variations of integrable distributions among arbitrary ones.
One can restrict the space of distributions under consideration to any ``reasonable" subspace, for example, to integrable ones.
Let us recall that the classical Frobenius Theorem says that the ideal ${\cal J}$ of forms generated by
one-forms $\omega_i$, $i = 1, \ldots, q$, is integrable if and only if $d{\cal J}\subset {\cal J}$, that is when there exist
one-forms $\eta_{ij},\ (i, j = 1,\ldots, q)$, such that $d\omega_i = \sum_j \omega_j\wedge\eta_{ij}$. Then
\[
 \omega_i\wedge \omega_0 = 0,\quad  i = 1\ldots q ,
\]
with $\omega_0 = \bigwedge_{i=1}^q d\omega_i$.
 The above motivates the following.

Let a distribution ${\cal D}=\ker\omega$ be framed, and $\omega=\bigwedge_{\,i=1}^q\omega_i$ with the frame $(\omega_1, \ldots, \omega_q)$ fixed. Set, as before,  $\omega_0 = \bigwedge_{i=1}^qd\omega_i$
and define functionals $J_i$, $i = 1, \ldots, q$
on the space $(\Lambda^1 (M^{2q+1})^q$  of $q$-tuples of $1$-forms, by
\[
 J_i(\omega):=\int_M \omega_i\wedge \omega_0.
\]

\begin{definition}\rm
The space  $\Lambda^q_{\rm av}(M^{2q+1})$ of \textit{$q$-forms integrable in average}
is defined as the following extension of the space of decomposable $q$-forms with integrable
$\ker\omega_i$: $\omega\in\Lambda^q_{\rm av}(M)$ if and only if
\begin{equation}\label{E-cond-int}
 J_i(\omega)=0,\quad i\in\{1,\ldots q\}.
\end{equation}
\end{definition}

Set $\widehat\omega_i=\bigwedge_{j\ne i} \omega_j$
and $\widehat\omega\,_{0i}=\bigwedge_{j\ne i} d\omega_j$  for $i = 1,\ldots , q$,
so that $\omega_0=\widehat\omega\,_{0i}\wedge d\omega_i$ for any $i$.

\begin{theorem}\label{T-crit1}
Let ${\cal D}=\ker\omega$ be an integrable framed distribution on $M^{2q+1}\setminus\Sigma$.
Then $(\omega,{\bf T})$ is critical for
\eqref{E-gv-invar}
for all variations obeying \eqref{E-omega-T-t}, \eqref{E-cond-int} and inequalities
\begin{eqnarray*}
 && \int_M\,\|
 (q+1)^{-1}\eta\wedge(d\eta)^{q-1}\wedge\dot\eta -\dot\omega\wedge\iota_{\,\bf T}\,(d\eta)^q
 \|^p\,{\rm d}V_g<\infty,\\
 && \int_M\,\|\sum\nolimits_j \omega_i\wedge \overset{\centerdot}\omega_j\wedge \widehat\omega\,_{0j}\|^p\,{\rm d}V_g<\infty
\end{eqnarray*}
for some $g\in{\rm Riem}(M,{\cal D},{\bf T})$ and $p$ such that $(k-1)(p-1)\ge1$,
if and only if
\begin{equation}\label{E-new-eqs}
\widehat\omega\,_i\wedge\Omega =\sum\nolimits_j\lambda_j
\,(\delta_{ij}\,\omega_0 -d\omega_j\wedge\widehat\omega\,_{0i}),\quad i\in\{1,\ldots,q\}.
\end{equation}
for some constants $\lambda_j\in\RR$ and $i = 1, \ldots , q$.
\end{theorem}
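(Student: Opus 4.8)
The plan is to read this as a \emph{constrained} Euler--Lagrange problem and solve it by Lagrange multipliers, recycling the first-variation computation from the proof of Theorem~\ref{T-main1}. Under the integrability hypothesis $d\omega=\omega\wedge\eta$, that proof shows $(d\eta)^q=\omega\wedge\alpha$ (see \eqref{E-deta-q}) and, modulo an exact form discarded by Stokes' Theorem (Lemma~\ref{L-sing1}, using the first convergence inequality), that
\[
 (q+1)^{-1}\,\overset{\centerdot}\gv(\omega,{\bf T})=\int_M\dot\omega\wedge\Omega ,
\]
with $\Omega$ as in \eqref{E-cond-Omega}. Expanding the decomposable variation through the fixed frame, $\dot\omega=\sum_j(-1)^{j-1}\dot\omega_j\wedge\widehat\omega_j$, turns this into $\sum_j(-1)^{j-1}\int_M\dot\omega_j\wedge(\widehat\omega_j\wedge\Omega)$, so the $2q$-forms $\widehat\omega_i\wedge\Omega$ on the left of \eqref{E-new-eqs} are precisely the gradient densities of $\gv$ with respect to the frame one-forms $\dot\omega_i$.

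Next I would differentiate the constraint functionals $J_i(\omega)=\int_M\omega_i\wedge\omega_0$. Since $\dot\omega_0=\sum_k\widehat\omega_{0k}\wedge d\dot\omega_k$, one integration by parts on $M\setminus\Sigma$ (again Lemma~\ref{L-sing1}, now with the second convergence inequality) moves the differential off $\dot\omega_k$; using $d\widehat\omega_{0k}=0$, the identity $\omega_0=\widehat\omega_{0i}\wedge d\omega_i$, and the fact that $d\omega_i$ is a \emph{two}-form (so $(d\omega_i)^2$ need not vanish and the off-diagonal terms $d\omega_j\wedge\widehat\omega_{0i}$ survive), each $\overset{\centerdot}{J_i}$ is brought to the form $\int_M\sum_k\dot\omega_k\wedge G_{ik}$ whose gradient densities $G_{ik}$ are exactly the combinations of $\omega_0$ and $d\omega_j\wedge\widehat\omega_{0i}$ that appear on the right of \eqref{E-new-eqs}.

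The criticality statement then follows from the standard Lagrange-multiplier principle. The pair $(\omega,{\bf T})$ is critical for \eqref{E-gv-invar} among variations obeying \eqref{E-omega-T-t} and the constraints \eqref{E-cond-int} exactly when the linear functional $\dot\omega\mapsto\overset{\centerdot}\gv$ vanishes on $\bigcap_i\ker\overset{\centerdot}{J_i}$; by elementary linear algebra this is equivalent to $\overset{\centerdot}\gv=\sum_i\lambda_i\,\overset{\centerdot}{J_i}$ for some $\lambda_i\in\RR$. Because the $J_i$ are \emph{integral} (global scalar) constraints obtained by integrating against fixed $2q$-forms, the multipliers are genuine real constants, not functions. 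Equating the two first variations and applying the fundamental lemma of the calculus of variations---legitimate because the $\dot\omega_i$ range over arbitrary one-forms compactly supported in $M\setminus\Sigma$---forces the gradient densities to coincide pointwise, which is system \eqref{E-new-eqs}.

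I expect the main obstacle to be bookkeeping and the isolation of genuinely free variations rather than any single hard estimate. First, the normalization \eqref{E-omega-T-t} together with the $GL(q,\RR)$ gauge freedom in the frame (which changes neither $\omega$ nor $\gv$) must be factored out before the fundamental lemma is invoked, so that $\widehat\omega_i\wedge\Omega$ and the $G_{ik}$ are compared only along independent directions; this is also where Lemma~\ref{L-iota-ab} is used to handle the contractions. Second, every integration by parts happens on $M\setminus\Sigma$, so the two $L^p$-bounds in the hypotheses are exactly what justifies discarding the boundary and singular contributions via Lemma~\ref{L-sing1}. The delicate combinatorial point is matching the signs and the diagonal-versus-off-diagonal structure of the $G_{ik}$ with the grouped right-hand side $\sum_j\lambda_j(\delta_{ij}\omega_0-d\omega_j\wedge\widehat\omega_{0i})$ of \eqref{E-new-eqs}.
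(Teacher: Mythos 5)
Your proposal is correct and takes essentially the same route as the paper's own proof: both recycle the first variation $\int_M\dot\omega\wedge\Omega$ from Theorem~\ref{T-main1}, compute the variation of the constraint functionals $J_i$ (the paper's displayed formula for $(\omega_i\wedge\omega_0)^{\,\centerdot}$ is exactly your integration by parts, with the exact term discarded via Lemma~\ref{L-sing1}), and conclude by the Lagrange multiplier method with constant multipliers. The differences are only presentational---the paper phrases the two gradients via the Hodge star and the $L^2$ inner product where you use gradient densities and the fundamental lemma of the calculus of variations---and your explicit attention to the sign bookkeeping, the normalization \eqref{E-omega-T-t}, and the frame gauge freedom is, if anything, more careful than the paper's sketch.
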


\proof
The proof of Theorem~\ref{T-main1} together with the formula
$(\alpha, \beta)_{L^2} = \int_M \alpha\wedge\star\,\beta$
defining the inner product in the space of forms, shows that the form $\Omega$ of (\ref{E-cond-Omega}), or rather its Hodge star image $\star\,\Omega$, can be considered
as the gradient of the functional $\gv$ at $\omega$. Its component (in the L$^2$-space of $q$-forms) tangent to the domain $(\Lambda^1 (M^{2q+1})^q$ of functionals $J_i$ coincides with the sequence $(\star\,(\widehat\omega\,_1\wedge\Omega), \ldots,
\star\,(\widehat\omega\,_q\wedge\Omega))$ of $q$ one-forms.
 For any $i$ we have
\[
 (\omega_i\wedge \omega_0)^{\,\centerdot} = \overset{\centerdot}\omega_i\wedge \omega_0
 +\sum\nolimits_j\overset{\centerdot}\omega_j\wedge d\omega_i\wedge \widehat\omega\,_{0j}
 -d\big(\sum\nolimits_j \omega_i\wedge \overset{\centerdot}\omega_j\wedge \widehat\omega\,_{0j}\big).
\]
This shows that the system $\{\star\,(\delta_{i1}\,\omega_0
 -d\omega_j\wedge\widehat\omega\,_{0i}), \ldots, \star\,(\delta_{qj}\,\omega_0
 -d\omega_j\wedge\widehat\omega\,_{0i})\,\}$ of $q$ one-forms can be considered as
 the gradient (in the L$^2$-space $\Lambda^1(M^{2q+1})^q$) of $J_i$.
By the Lagrange multipliers method, a point $(\omega,{\bf T})$ is critical for
the functional $\gv$ restricted to the space $\Lambda^1 (M^{2q+1})^q$ if and only if
the gradient of $\gv |_{\Lambda^1 (M^{2q+1})^q}$ coincides with a linear combination (with constant coefficients) of the gradients of $J_i$.
 This is equivalent to the statement of our Theorem.
\hfill$\square$

\begin{remark}\rm
Condition $J_i(\omega)=0$ is weaker than the pointwise condition
$\omega_i\wedge \omega_0=0$, which also is weaker than integrability of
$\ker\omega_i$.
Certainly, (\ref{E-new-eqs}) yields
\begin{equation}\label{E-LT_ge}
 \star\,(\widehat\omega\,_i\wedge\Omega)\wedge\bigwedge\nolimits_{j=1}^q \star\,(\delta_{ij}\,\omega_0-d\omega_j\wedge\widehat\omega\,_{0i}) = 0,\quad i\in\{1,\ldots,q\}.
\end{equation}
but the converse is not true: (\ref{E-LT_ge}) implies  that the forms $\star\,(\widehat\omega\,_i\wedge\Omega)$ are linear combinations of
$\star\,(\delta_{ij}\,\omega_0 - d\omega_j\wedge\widehat\omega\,_{0i})$ but the coefficients may depend on $i$ and vary over $M$.
For $q=1$, \eqref{E-new-eqs} reads as
\begin{equation}\label{E-LT_ge-1}
 ({\mathcal L}_{\,T})^3\,\omega = \lambda\,{\mathcal L}_{\,T}\,\omega,
\end{equation}
and \eqref{E-LT_ge} tells us that
the three-form on $M^3$ representing $\gv(\omega,T)$ is invariant under the flow of $T$,
that is, see also \cite{rw-gv1},
\[
 {\cal L}_{\,T}(\eta\wedge d\eta)=0,
\]
and \eqref{E-cond-int} reads as condition for one-form $\omega$:
\begin{equation}\label{E-cond-int-1}
 \int_M \omega\wedge d\omega =0.
\end{equation}
\end{remark}

It is rather difficult to find explicitly the derivative of the functional \eqref{E-gv-invar} for all variations among foliations,
see \cite{as2015}.
Therefore, in the next corollary we present just a condition sufficient for being critical point (foliation) of \eqref{E-gv-invar} with respect to such variations.

\begin{corollary}
Let $\calf$ with $T\calf=\ker\omega$ be a codimension $q$ foliation of $M^{2q+1}$.
If \eqref{E-new-eqs}
holds for any $(\omega,{\bf T})$ such that $\iota_{\,\bf T}\,\omega=1$,
then $\gv(\calf)$ is infinitesimally rigid, i.e., $\overset{\centerdot}\gv(\calf)=0$ for any infinitesimal deformation of $\calf$ among foliations.
\end{corollary}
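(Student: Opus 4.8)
The plan is to reduce the Corollary to Theorem~\ref{T-crit1} by verifying that its hypotheses are exactly the data at hand. First I would observe that a codimension $q$ foliation $\calf$ with $T\calf=\ker\omega$ is, in particular, an integrable framed distribution on $M^{2q+1}$ (here with empty or ignorable singularity set $\Sigma$, so the convergence conditions are automatic on a closed manifold). Thus the standing assumptions of Theorem~\ref{T-crit1} are met for any pair $(\omega,{\bf T})$ representing $\calf$ and satisfying the normalization $\iota_{\,\bf T}\,\omega=1$. The hypothesis of the Corollary is precisely that the Euler--Lagrange system \eqref{E-new-eqs} holds for every such pair, which is the ``only if'' direction's conclusion in Theorem~\ref{T-crit1}. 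Reading that theorem in the forward direction, \eqref{E-new-eqs} is equivalent to $(\omega,{\bf T})$ being critical for the functional \eqref{E-gv-invar} among all variations obeying \eqref{E-omega-T-t}, \eqref{E-cond-int} and the stated integrability bounds.

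The second step is to translate criticality of the functional into infinitesimal rigidity of the Godbillon--Vey number $\gv(\calf)$. The key point is that an infinitesimal deformation of $\calf$ \emph{among foliations} is a one-parameter family $\calf_t$ of foliations with $T\calf_t=\ker\omega_t$, and each such family can be represented by a smooth family $(\omega_t,{\bf T}_t)$ satisfying \eqref{E-omega-T-t}; since the $\omega_t$ remain decomposable with integrable kernel, the average-integrability constraint \eqref{E-cond-int} is satisfied pointwise (indeed $\omega_i\wedge\omega_0=0$ by Frobenius, so a fortiori $J_i(\omega_t)=0$). Hence every admissible deformation among foliations lies within the class of variations over which Theorem~\ref{T-crit1} asserts criticality, and for such variations $\gv(\omega_t,{\bf T}_t)=\gv(\calf_t)$ because the Godbillon--Vey number of a foliation is independent of the representing pair. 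Differentiating at $t=0$ gives $\overset{\centerdot}\gv(\calf)=\overset{\centerdot}\gv(\omega,{\bf T})=0$.

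The main obstacle I anticipate is the matching of admissible variation classes: one must confirm that a genuine deformation through foliations produces a family $(\omega_t,{\bf T}_t)$ meeting \emph{all} of the hypotheses of Theorem~\ref{T-crit1}, in particular the finiteness conditions and the constraint \eqref{E-cond-int}, rather than the broader unconstrained variations of Theorem~\ref{T-main1}. The resolution is that staying among foliations is a \emph{stronger} restriction than \eqref{E-cond-int}, so the deformations the Corollary cares about form a subclass of those controlled by Theorem~\ref{T-crit1}; criticality with respect to the larger constrained class therefore implies vanishing of the first variation on the foliation subclass. A secondary subtlety is that criticality is stated for the pair $(\omega,{\bf T})$ while rigidity concerns the invariant $\gv(\calf)$; this is bridged by the representation-independence of $\gv(\calf)$ established earlier (the Propositions on cases (i)--(iii)), ensuring that the derivative of the functional along the family computes exactly $\overset{\centerdot}\gv(\calf)$. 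Once these identifications are in place, the conclusion $\overset{\centerdot}\gv(\calf)=0$ is immediate, giving infinitesimal rigidity.
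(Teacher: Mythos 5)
Your proposal is correct and follows essentially the route the paper intends: the corollary is a direct consequence of Theorem~\ref{T-crit1}, since a deformation among foliations keeps each $\omega_t$ pointwise integrable (so $\omega_i\wedge\omega_0=0$ by Frobenius, hence $J_i(\omega_t)=0$), making such deformations a subclass of the constrained variations for which \eqref{E-new-eqs} characterizes criticality, and representation-independence of $\gv(\calf)$ identifies the first variation of the functional with $\overset{\centerdot}\gv(\calf)$. The paper offers no separate proof beyond this reduction, so your argument — including the observation that the $L^p$ conditions are automatic on a closed manifold — fills in exactly the intended steps.
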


\section{Examples}


A distribution ${\cal D}$ is called \textit{mixed totally geodesic} with respect to sub-distributi\-ons
$\{N\}$ and ${\cal B}$ if $h(N,{\cal B})=h({\cal B},N)=0$.
The simple examples are provided by  totally umbilical
${\cal D}$'s, see Definition~\ref{D-t-umb}.


\begin{proposition}\label{P-Riem}
Let $g$ be a Riemannian metric
and ${\cal D}=\ker\omega$ a mixed totally geodesic distribution
with respect to $\{N\}$ and ${\cal B}$ on $M^{2q+1}\setminus\Sigma$.
If either ${\cal D}^\bot$ or ${\cal B}$ is integrable and $H^\bot$ $($or $N)$ is parallel along ${\cal D}^\bot$
then $(\omega,{\bf T})$ is critical for \eqref{E-gv-invar}
with respect to all variations obeying \eqref{E-omega-T-t} and \eqref{E-cond1-Sigma}.
\end{proposition}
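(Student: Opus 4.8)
The plan is to show that the geometric hypotheses force the criticality condition $(d\eta)^q=0$, so that the result follows immediately from the earlier Proposition stating that $(d\eta)^q=0$ implies criticality with respect to variations obeying \eqref{E-omega-T-t} and \eqref{E-cond1-Sigma}. By Lemma~\ref{L-02a} we have $\eta=(-1)^{q-1}(H^\bot)^\flat$ and, on the open set $U$ where $H^\bot\ne0$, $\eta=(-1)^{q-1}\|H^\bot\|\,N^\flat$. The first step is therefore to compute $d\eta$, or rather $(-1)^{q-1}d\eta(X,Y)=\<\nabla_X H^\bot,Y\>-\<\nabla_Y H^\bot,X\>$ from \eqref{eq1:pw2013b}, and to organize the evaluation of $(d\eta)^q$ on the adapted frame $\{{\bf T},N,{\bf B}\}$ introduced after Lemma~\ref{L-02a}, where ${\cal B}={\cal D}\cap N^\bot$ is the binormal distribution.

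Next I would exploit the fact that $(d\eta)^q$ is a $2q$-form on a $(2q+1)$-manifold, so to show it vanishes it suffices to check that $\iota_Z(d\eta)^q=0$ for each basis vector $Z$, equivalently that $d\eta$ restricted to enough $2$-planes degenerates. Writing $\eta$ as a multiple of $N^\flat$, the two-form $d\eta$ splits according to where its arguments lie: among ${\cal D}^\bot={\rm span}({\bf T})$, along $N$, and along ${\cal B}$. The mixed-totally-geodesic hypothesis $h(N,{\cal B})=h({\cal B},N)=0$ is precisely what controls the cross terms $d\eta(N,B)$ and $d\eta(B,T_i)$ through the second fundamental form appearing in $\<\nabla_X H^\bot,Y\>$; the integrability of ${\cal D}^\bot$ (or of ${\cal B}$) kills the remaining antisymmetric brackets, and parallelism of $H^\bot$ (or $N$) along ${\cal D}^\bot$ forces the ${\cal D}^\bot$-directional derivatives $\<\nabla_{T_i}H^\bot,\cdot\>$ to contribute nothing transverse. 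The upshot I expect is that $d\eta$ takes values in a subspace spanned by at most $q$ independent two-forms whose $q$-th wedge power still contains a repeated factor, so that $(d\eta)^q=0$ on $U$; off $U$, where $H^\bot=0$, one has $\eta=0$ and hence $(d\eta)^q=0$ trivially.

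The main obstacle will be the careful bookkeeping of which terms in $d\eta(X,Y)=\<\nabla_X H^\bot,Y\>-\<\nabla_Y H^\bot,X\>$ survive under each branch of the hypothesis, since the statement allows an ``either/or'' between ${\cal D}^\bot$ and ${\cal B}$ being integrable and between $H^\bot$ and $N$ being parallel. I would handle the two branches in parallel, reducing both to the claim that the rank of $d\eta$ as a skew form is too small for $(d\eta)^q$ to be nonzero: one decomposes $\nabla H^\bot$ using $H^\bot=\|H^\bot\|N$, so that $\nabla_X H^\bot=X(\|H^\bot\|)N+\|H^\bot\|\nabla_X N$, and the parallelism assumption annihilates the ${\cal D}^\bot$-component of $\nabla N$ while the mixed-geodesic and integrability assumptions annihilate the relevant ${\cal B}$-components. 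Once the degeneracy of $d\eta$ is established, $(d\eta)^q=0$ and the conclusion follows directly from the cited Proposition, with no further variational computation required.
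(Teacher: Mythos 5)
Your overall strategy fails at its key step: the hypotheses of Proposition~\ref{P-Riem} do \emph{not} force $(d\eta)^q=0$, so the reduction to the earlier Proposition (criticality when $(d\eta)^q=0$) is not available. What the hypotheses kill, via Lemma~\ref{L-02}, are the components $d\eta(T_i,B_j)$ (mixed total geodesy gives $\<h_{B_j,N},T_i\>=0$ and parallelism gives $\<\nabla_{T_i}N,B_j\>=0$), the components $d\eta(T_i,T_j)$ (parallelism of $H^\bot$, or of $N$, along ${\cal D}^\bot$ yields $\<[T_i,T_j],N\>=0$), and, in the branch where ${\cal B}$ is integrable, $d\eta(B_i,B_j)$. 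They say nothing about $d\eta(T_i,N)=\pm\big(T_i(\|H^\bot\|)-\|H^\bot\|\<h_{N,N},T_i\>\big)$, because mixed total geodesy does not control $h_{N,N}$; and they say nothing about $d\eta(N,B_j)$, nor about $d\eta(B_i,B_j)$ in the branch where only ${\cal D}^\bot$ is integrable --- by \eqref{E-d-eta} these components are governed by the integrability tensor of ${\cal D}$, and ${\cal D}$ is \emph{not} assumed integrable in this Proposition. Concretely, for $q=2$ with ${\cal D}^\bot$ integrable one finds $(d\eta)^2(T_i,N,B_1,B_2)=\pm 2\,d\eta(T_i,N)\,d\eta(B_1,B_2)$, which is generically nonzero: $d\eta$ can have full rank $2q$, so your ``repeated factor'' degeneracy argument breaks down. (A smaller slip: the auxiliary Proposition you invoke is stated for variations obeying \eqref{E-L1-sing}, not \eqref{E-cond1-Sigma}.)

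What the hypotheses do give --- and this is the paper's route --- is the weaker identity $\iota_{\,\bf T}(d\eta)^q=0$: in every term of the pairing expansion of $(d\eta)^q({\bf T},\cdot,\ldots,\cdot)$ each $T_i$ must be paired with another $T_j$, with some $B_k$ (both kinds of factors vanish by the hypotheses), or with $N$, and a single $N$ cannot absorb $q\ge2$ of the $T_i$'s; the pure ${\bf B}$-component is exactly the determinant in \eqref{eq:pw2013} of Theorem~\ref{T-00}, which vanishes since every entry $\<\nabla_{T_i}N,B_j\>-\<h_{B_j,N},T_i\>$ is zero. Criticality then follows not from the first-variation formula \eqref{E-gv-omegat} but from Theorem~\ref{T-main1}: since $(d\eta)^q$ is closed, ${\cal L}_{\,\bf T}(d\eta)^q=d\,\iota_{\,\bf T}(d\eta)^q=0$, hence condition \eqref{E-cond0} holds, and $(\omega,{\bf T})$ is critical for all variations obeying \eqref{E-omega-T-t} and \eqref{E-cond1-Sigma} --- which is precisely why \eqref{E-cond1-Sigma} appears in the statement. (Your argument would in fact be salvageable if ${\cal D}$ were integrable, as Theorem~\ref{T-main1} anyway requires: then $d\eta$ vanishes on $\Lambda^2{\cal D}$, so under the remaining hypotheses $d\eta$ reduces to $V^\flat\wedge N^\flat$ with $V\in{\cal D}^\bot$ and $(d\eta)^q=0$ for $q\ge2$; but that assumption is not in the statement you were asked to prove, and for $q=1$ even this fails.) So the repair is to replace the claim $(d\eta)^q=0$ by $\iota_{\,\bf T}(d\eta)^q=0$ and route the conclusion through \eqref{E-cond0}.
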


\begin{proof}
By integrability of either ${\cal D}^\bot$ or ${\cal B}$ and Theorem~\ref{T-00},
equality \eqref{eq:pw2013} holds.
Thus, conditions
yield vanishing of $\iota_{\,\bf T}(d\eta)^q$.
By
Theorem~\ref{T-main1},
$(\omega,{\bf T})$ is critical for \eqref{E-gv-invar}, when $\iota_{\,\bf T}(d\eta)^q=0$.
\end{proof}

\begin{corollary}
Let ${\cal D}=\ker\omega$  on $M^{2q+1}$ with a metric $g$,
${\cal D}^\bot$ tangent to a Riemannian foliation and $N$ parallel along its leaves.
Then $(\omega,{\bf T})$ is critical for \eqref{E-gv-invar}
with respect to all variations obeying \eqref{E-omega-T-t}.
\end{corollary}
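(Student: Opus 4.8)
The plan is to deduce the statement from Proposition~\ref{P-Riem}, which is the immediately preceding result, by checking that the two hypotheses here---${\cal D}^\bot$ tangent to a Riemannian foliation, and $N$ parallel along its leaves---imply the three geometric assumptions of that Proposition, and then to observe that the hypothesis ``$M^{2q+1}$'' (no singularity set) makes the analytic condition \eqref{E-cond1-Sigma} superfluous, so that it may be dropped from the conclusion. First I would record the two easy matchings. Since ${\cal D}^\bot$ is tangent to a foliation it is integrable, which supplies the alternative ``${\cal D}^\bot$ integrable'' in Proposition~\ref{P-Riem}; and ``$N$ parallel along its leaves'' means $\nabla_V N=0$ for every $V\in{\cal D}^\bot$, which is exactly the alternative ``$N$ parallel along ${\cal D}^\bot$'' required there.

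The substantive point is to show that a bundle-like metric forces ${\cal D}$ to be mixed totally geodesic with respect to $\{N\}$ and ${\cal B}$ in the sense of Definition~\ref{D-t-umb}. I would invoke the standard (Reinhart) characterization of bundle-like metrics: holonomy-invariance of the transverse metric along the leaves is equivalent to $g(\nabla_X V,Y)+g(\nabla_Y V,X)=0$ for all $X,Y\in{\cal D}$ and $V\in{\cal D}^\bot$, which by $g(\nabla_X V,Y)=-g(V,h(X,Y))$ says precisely that ${\rm Sym}(h)=0$ for the second fundamental form $h$ of ${\cal D}$. This already gives $h(N,B)=-h(B,N)$ for $B\in{\cal B}$, so mixed total geodesy reduces to the single identity $h(B,N)=0$.

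To obtain that identity I would exploit that $h(\,\cdot\,,N)=(\nabla_{(\cdot)}N)^\bot$ is tensorial in its first slot, so its value at a point $x$ may be computed from any convenient extension of a given $b\in{\cal B}_x$. Extending $b$ inside ${\cal B}$ and using $g(N,V)\equiv0$ together with $\nabla_V N=0$, the contraction collapses to $g(h(B,N),V)=-g(N,[B,V])$; extending $b$ instead by a foliate (basic) field, for which $[B,V]\in{\cal D}^\bot$, shows that the relevant $N$-component of $[B,V]$ is controlled by the parallel condition. Combining the two extensions should force $h(B,N)=0$, hence mixed total geodesy. With the three hypotheses of Proposition~\ref{P-Riem} in force, its conclusion yields $\iota_{\,\bf T}(d\eta)^q=0$; then, as in the proof of that Proposition and of Theorem~\ref{T-main1}, the form $\Omega$ of \eqref{E-cond-Omega} vanishes and $(\omega,{\bf T})$ is critical.

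It remains to remove \eqref{E-cond1-Sigma}. Here $\Sigma=\varnothing$, so the forms entering \eqref{E-cond1-Sigma} are globally smooth on $M$; on the (closed) ambient manifold their $L^p$-norms are automatically finite, so Lemma~\ref{L-sing1} applies trivially and the integrability requirement is vacuous, leaving only \eqref{E-omega-T-t} as in the statement. I expect the main obstacle to be exactly the mixed-total-geodesy step: the Reinhart identity controls only ${\rm Sym}(h)$, so one must pin down the antisymmetric (integrability) part of $h$ in the mixed directions $N$ and ${\cal B}$, and this forces a careful bookkeeping in which the freedom to choose foliate versus ${\cal B}$-adapted extensions of $b$, the integrability of ${\cal D}^\bot$, and the parallel condition $\nabla_V N=0$ must be used together so that the apparent derivative terms cancel.
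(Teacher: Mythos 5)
Your overall route is the paper's own: the statement appears there without a separate proof, as an immediate consequence of Proposition~\ref{P-Riem}, and your reduction to verifying its three hypotheses (integrability of ${\cal D}^\bot$, parallelism of $N$, mixed total geodesy of ${\cal D}$) is exactly what is required; your use of the Reinhart criterion to obtain ${\rm Sym}(h)=0$ from the bundle-like property is also correct. One repair first: ``$N$ parallel along its leaves'' cannot mean $\nabla_V N=0$ for all $V\in{\cal D}^\bot$, since $\sum_i\<\nabla_{T_i}N,T_i\>=-\<N,\sum_i\nabla_{T_i}T_i\>=-\|H^\bot\|\ne0$ wherever $N$ is defined; it must be parallelism in the normal bundle of the foliation, i.e. $\<\nabla_{T_i}N,B_j\>=0$. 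Your computations only ever use this weaker statement, so that is harmless.

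The genuine gap is precisely the step you flag: mixed total geodesy does not follow from these hypotheses, and in fact it is false in general. Your two extension computations give $\<h(b,N),V\>=-\<N,[B,V]\>$ (extension inside ${\cal B}$) and $\<h(b,N),V\>=-V\<N,X\>$ (foliate extension $X$); these are two expressions for the same, generally nonzero, scalar, and comparing them yields nothing beyond the tensoriality of $h(\cdot\,,N)$. Concretely, for a bundle-like metric the Koszul formula together with $({\cal L}_{T}g)(N,B)=0$ gives $2\<\nabla_{T}N,B\>=2\<[T,N],B\>-\<[N,B],T\>$, so parallelism of $N$ is equivalent to $\<[T,N],B\>=\<{\cal T}_{N,B},T\>$, whereas mixed total geodesy (given ${\rm Sym}(h)=0$) is ${\cal T}_{N,B}=0$; under parallelism the latter is equivalent to $N$ being a foliate (projectable) field, which is an independent condition. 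A local counterexample with $q=1$: on $\RR^3$ let $P,Q,\psi$ be functions of $z$ solving $P'=\cos\psi$, $Q'=\sin\psi$, $\psi'=\tfrac12(P\sin\psi-Q\cos\psi)$ with $\psi'(0)\ne0$, and take $g=dx^2+dy^2+(dz+P\,dx+Q\,dy)^2$, vertical field $T=\partial_z$, horizontal orthonormal frame $X_1=\partial_x-P\partial_z$, $X_2=\partial_y-Q\partial_z$. This is a Riemannian flow (bundle-like metric), $H^\bot=\cos\psi\,X_1+\sin\psi\,X_2=N$ has unit length, and $\<\nabla_TN,B\>=\psi'-\tfrac12(PQ'-QP')=0$, so $N$ is parallel along the leaves in the normal bundle; yet $\<h_{B,N},T\>=-\tfrac12\<[N,B],T\>=-\psi'\ne0$. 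Hence the implication ``Riemannian foliation $+$ $N$ parallel $\Rightarrow$ ${\cal D}$ mixed totally geodesic'', on which your argument rests, cannot be established; to close the proof one needs a stronger hypothesis, e.g. that $H^\bot$ is basic/projectable (which is what actually holds in the paper's twisted-product corollary, where $\phi=\phi_1\phi_2$). The same looseness already occurs in the paper itself, whose remark that totally umbilical distributions are mixed totally geodesic likewise controls only ${\rm Sym}(h)$ and not the integrability part ${\cal T}_{N,{\cal B}}$.
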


Let $(B,g_B)$ and $(F,g_F)$ be Riemannian manifolds and $\phi>0$ a smooth function on $B\times F$.
The~\textit{twisted product} $M=B\times_\phi F$ is the manifold $M=B\times F$ with the metric
 $g = \pi^*g_B + (\phi\circ \pi)^2(\pi')^*g_F$, where $\pi:M\to B$ and $\pi':M\to F$ are projections.
The fibers $\{x\}\times F\ (x\in B)$ are totally umbilical, and the leaves $B\times\{y\}\ (y\in F)$ are totally geodesic.
If we regard $\pi:B\times_\phi F\to B$ as a submersion, then the fibers are conformally related with each other;
this gives us a \textit{conformal submersion}. If $\phi$ depends on $B$ only, the twisted product becomes the \textit{warped~product}.

\begin{lemma}[see \cite{c-by}]\label{L-chen1}
Let $M=B\times_\phi F$ be a twisted product. Then

(i) fibers $\{x\}\times F$ are totally umbilical in $M$ with the mean curvature vector $-(\nabla\log\phi)^\top$,

(ii) fibers have parallel mean curvature if and only if $\phi=\phi_1\phi_2$ with $\phi_1\in C^2(B)$ and $\phi_2\in C^2(F)$.
\end{lemma}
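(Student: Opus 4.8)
The plan is to compute with the Levi–Civita connection $\nabla$ of $g=\pi^*g_B+(\phi\circ\pi)^2(\pi')^*g_F$ through the Koszul formula, after splitting $TM$ into the \emph{horizontal} distribution tangent to the leaves $B\times\{y\}$ (on which $g$ agrees with $g_B$) and the \emph{vertical} distribution tangent to the fibers $\{x\}\times F$. I write $(\,\cdot\,)^\top$ for the horizontal component, which is exactly the component normal to a fiber. Throughout, $V,W$ denote lifts of vector fields on $F$ and $X,Y,Z$ lifts of vector fields on $B$; such lifts commute ($[V,X]=0$), are orthogonal ($\<V,X\>=0$), and satisfy $X(g_F(V,W))=0$ and $V(g_B(X,Y))=0$ because the corresponding inner products are pulled back from a single factor.

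For (i), I evaluate the vector-valued second fundamental form $h(V,W)=(\nabla_V W)^\top$ of a fiber against a horizontal $X$. By the orthogonality and bracket relations the Koszul formula collapses to $2\<\nabla_V W,X\>=-X\<V,W\>$. Writing $\<V,W\>=\phi^2 g_F(V,W)$ and using $X(g_F(V,W))=0$ yields $\<\nabla_V W,X\>=-X(\log\phi)\,\<V,W\>=-\<(\nabla\log\phi)^\top,X\>\,\<V,W\>$. Hence $h(V,W)=-\<V,W\>\,(\nabla\log\phi)^\top$ is pointwise proportional to the induced metric, so every fiber is totally umbilical; tracing over a $g$-orthonormal vertical frame gives the mean curvature vector $-(\nabla\log\phi)^\top$, which proves (i).

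For (ii), parallel mean curvature means $\nabla^\perp_V H=(\nabla_V H)^\top=0$ for all vertical $V$, where $H=-(\nabla\log\phi)^\top$ and $\nabla^\perp$ is the normal connection of the fiber. I first record the auxiliary identity $(\nabla_V Y)^\top=0$ for vertical $V$ and horizontal $Y$: a second use of Koszul, tested against a horizontal $Z$, annihilates every term, so $\nabla_V Y$ has no horizontal part (equivalently, the leaves $B\times\{y\}$ are totally geodesic). Testing $\nabla_V H$ against horizontal $Y$, and using $\<H,Y\>=-Y(\log\phi)$ together with $\<H,\nabla_V Y\>=\<H,(\nabla_V Y)^\top\>=0$, I obtain
\[
 \<\nabla^\perp_V H,\,Y\>=-V\bigl(Y(\log\phi)\bigr)=-Y\bigl(V(\log\phi)\bigr),
\]
the last equality holding because $[V,Y]=0$. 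Thus $\nabla^\perp H\equiv0$ is equivalent to the vanishing of all mixed second derivatives of $\log\phi$, i.e.\ $\partial_{v^a}\partial_{u^i}\log\phi=0$ in any product coordinates $(u^i,v^a)$. Integrating this system over the connected product forces $\log\phi(u,v)=a(u)+b(v)$, whence $\phi=\phi_1\phi_2$ with $\phi_1=e^{a}\in C^2(B)$ and $\phi_2=e^{b}\in C^2(F)$. The converse is immediate, since for a product $\log\phi$ has vanishing mixed Hessian and therefore $\nabla^\perp H=0$.

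The step I expect to be the main obstacle is the clean reduction in (ii): one must check that the connection term $(\nabla_V Y)^\top$ genuinely drops out, so that the geometric condition $\nabla^\perp H=0$ collapses \emph{exactly} to the analytic statement that the mixed Hessian of $\log\phi$ vanishes, with no residual warping or curvature contributions surviving. The secondary point is the passage from the local coordinate identity $\partial_v\partial_u\log\phi=0$ to a genuine global factorization $\phi=\phi_1\phi_2$, which requires connectedness of $B$ and $F$ and is where the $C^2$ regularity of the factors enters.
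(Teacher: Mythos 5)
Your proof is correct, but there is nothing in the paper to compare it with: the lemma is quoted from Chen's book \cite{c-by}, and the paper gives no proof of it at all. Your computation is the standard one for twisted products, and both halves are sound. In (i), the Koszul formula does collapse to $2\langle\nabla_V W,X\rangle=-X\langle V,W\rangle$, yielding $h(V,W)=-\langle V,W\rangle\,(\nabla\log\phi)^\top$, which is the umbilicity statement; note only that the literal trace of $h$ over a vertical orthonormal frame is $-q\,(\nabla\log\phi)^\top$, so the vector named in the lemma is the \emph{normalized} mean curvature $\frac1q\operatorname{Tr}h$ (Chen's convention). This sits awkwardly with the unnormalized $H^\bot=\sum_i(\nabla_{T_i}T_i)^\top$ used elsewhere in the paper --- an inconsistency of the paper rather than of your argument --- but you should say explicitly which convention your final ``tracing'' step uses. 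In (ii), the auxiliary identity $(\nabla_V Y)^\top=0$ (equivalently, total geodesy of the leaves $B\times\{y\}$) is exactly what kills the connection term, so $\nabla^\perp H=0$ reduces, as you claim, to $V(Y(\log\phi))=0$ for all lifts, i.e.\ to the vanishing of the mixed Hessian of $\log\phi$; your integration of this to the global factorization $\phi=\phi_1\phi_2$, with connectedness of $B$ and $F$ invoked explicitly, is complete, and the converse direction is immediate. In short, your proposal supplies a correct, self-contained proof of a statement the paper only cites.
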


\begin{corollary}
Let $B^{q+1}\!\times_\phi F^q$ be a twisted product,
${\bf T}=T_1\wedge\ldots\wedge T_q$, where $\{T_i\}$ are tangent to the fibers, and ${\cal D}$ is tangent to the leaves.
(i)~If $\phi$ is the product of functions $\phi_1\in C^2(B)$ and~$\phi_2\in C^2(F)$
then $(\omega,{\bf T})$ is critical for \eqref{E-gv-invar} with respect to all variations obeying \eqref{E-omega-T-t}.
(ii)~In particular, if $B^{q+1}\times_\phi F^q$ is a warped product, then $(\omega,{\bf T})$
is critical for \eqref{E-gv-invar} and $\gv(\omega,{\bf T})=0$.
\end{corollary}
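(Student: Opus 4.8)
The plan is to read off the extrinsic geometry of the two orthogonal foliations of $M=B^{q+1}\times_\phi F^q$ from Lemma~\ref{L-chen1}, apply Proposition~\ref{P-Riem} for part~(i), and compute $\eta$ explicitly for part~(ii). First I identify the distributions. Since ${\cal D}$ is tangent to the leaves $B\times\{y\}$, which are totally geodesic in the twisted product, the second fundamental form $h$ of ${\cal D}$ vanishes; hence ${\cal D}$ is totally geodesic and, a fortiori, mixed totally geodesic with respect to any $\{N\}$ and ${\cal B}$ in the sense of Definition~\ref{D-t-umb} (indeed $h\equiv0$). Dually, ${\cal D}^\bot={\rm span}({\bf T})$ is tangent to the fibers $\{x\}\times F$, which are integral submanifolds, so ${\cal D}^\bot$ is integrable, and by Lemma~\ref{L-chen1}(i) its mean curvature vector is the ${\cal D}$-component $H^\bot=-(\nabla\log\phi)^\top$ of $-\nabla\log\phi$.

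For part~(i) I would verify the one remaining hypothesis of Proposition~\ref{P-Riem}: that $H^\bot$ is parallel along ${\cal D}^\bot$. This is exactly the assertion that the fibers have parallel mean curvature vector, which by Lemma~\ref{L-chen1}(ii) holds precisely when $\phi=\phi_1\phi_2$ with $\phi_1\in C^2(B)$ and $\phi_2\in C^2(F)$. Having checked that ${\cal D}$ is mixed totally geodesic, that ${\cal D}^\bot$ is integrable, and that $H^\bot$ is parallel along ${\cal D}^\bot$, Proposition~\ref{P-Riem} gives that $(\omega,{\bf T})$ is critical for \eqref{E-gv-invar} among all variations obeying \eqref{E-omega-T-t} and \eqref{E-cond1-Sigma}. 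When $M=B\times F$ is closed (no singularity set), the convergence condition \eqref{E-cond1-Sigma} is automatic, and we obtain criticality with respect to all variations obeying \eqref{E-omega-T-t}, as claimed.

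For part~(ii) a warped product is the special case $\phi=\phi(B)$, that is $\phi_2\equiv 1$, so criticality is already furnished by~(i). To obtain $\gv(\omega,{\bf T})=0$ I would compute $\eta$ directly. As $\phi$ depends on $B$ alone, $\nabla\log\phi$ is orthogonal to the fibers and thus tangent to ${\cal D}$, whence $(\nabla\log\phi)^\top=\nabla\log\phi$ and $H^\bot=-\nabla\log\phi$. By Lemma~\ref{L-02a}, namely \eqref{eq1:pw2013}, this yields $\eta=(-1)^{q-1}(H^\bot)^\flat=(-1)^q\,d(\log\phi)$, a globally exact one-form. Hence $d\eta=0$, so $(d\eta)^q=0$ and $\eta\wedge(d\eta)^q=0$ pointwise, giving $\gv(\omega,{\bf T})=\int_M\eta\wedge(d\eta)^q=0$; criticality is then recovered independently, since $(d\eta)^q=0$ is the hypothesis of Theorem~\ref{T-main1}.

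The main difficulty is bookkeeping across conventions rather than any hard estimate: one must match the totally geodesic leaves and totally umbilical fibers of Lemma~\ref{L-chen1} to the mixed-totally-geodesic and parallel-mean-curvature hypotheses of Proposition~\ref{P-Riem}, confirm that Chen's ``parallel mean curvature vector of the fibers'' is precisely the normal-connection parallelism ``$H^\bot$ parallel along ${\cal D}^\bot$'' used there, and track the orientation factor $(-1)^{q-1}$ in passing through \eqref{eq1:pw2013}. As a consistency check I note that the computation of~(ii) in fact runs verbatim in case~(i): for $\phi=\phi_1\phi_2$ the gradient $\nabla\log\phi_2$ is vertical, so $H^\bot=-(\nabla\log\phi)^\top=-\nabla\log\phi_1$ is again a gradient, whence $\eta=(-1)^q\,d(\log\phi_1)$ is exact; this reconfirms the criticality obtained from Proposition~\ref{P-Riem} and in fact shows $\gv(\omega,{\bf T})=0$ already in~(i).
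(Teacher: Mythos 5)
Your proof is correct and follows essentially the same route as the paper: the leaves are totally geodesic (so ${\cal D}$ is trivially mixed totally geodesic), the fibers give an integrable ${\cal D}^\bot$ whose mean curvature $H^\bot=-(\nabla\log\phi)^\top$ is parallel along ${\cal D}^\bot$ precisely when $\phi=\phi_1\phi_2$ by Lemma~\ref{L-chen1}, and Proposition~\ref{P-Riem} then yields criticality — this is exactly the paper's (much terser) argument. Your explicit computation for part (ii), that $\eta=(-1)^{q}\,d(\log\phi)$ is exact so $d\eta=0$ and $\gv(\omega,{\bf T})=0$, supplies the detail the paper leaves implicit, and your closing observation that the same exactness argument already gives $\gv(\omega,{\bf T})=0$ in case (i) is a correct (and welcome) strengthening.
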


\begin{proof} By our conditions, the leaves $\bar M\times\{y\}$ are totally geodesic: $h=0$.
Let the fibers $F^q\times\{y\}$ have mean curvature vector $H^\bot$.
Thus the claims follow from Proposition~\ref{P-Riem} and Lemma~\ref{L-chen1}.
\end{proof}

Let a function $f(r)\ (r\ge0)$ of class C$^k$ has vertical asymptote at $r=r_0>0$ and satisfies $f(0) = 0$.
The foliation within a Reeb component in the solid torus $D^2\times S^1$ can be defined by the equation $\omega=0$,~where
\[
 \omega(r,t)=\cos\mu(r)\,dt - \sin\mu(r)\,dr,\quad
 \mu(r)=\arctan f'(r),
\]
$(r,\theta)$ are the polar coordinates in the disc $D^2=\{0\le r\le r_0\}$ and $t$ is a parameter along the circle~$S^1$,
see e.g.~\cite{tamura,walczak}. Since $\mu(0)=0$ and $\mu(r_0)=\pi/2$,
then $ \omega(0,t)=dt$ (hence $ {\cal D}(0,t)=\{\partial_r, \partial_\theta \} $ -- tangent plane to $D^2$ at the origin)
and  $ \omega(r_0,t)=-dr$ (hence $ {\cal D}(r_0,t)=\{\partial_t, \partial_\theta \} $).
Gluing two foliated solid tori yields a Reeb foliation of a sphere~$S^3$.
We will show that critical foliations have singularity set $\Sigma$, the axis $r=0$, in this case we assume $f'(0)\ne0$.

It seems reasonable that critical foliations are singular.
It is a bit like minimizing the total curvature of a curve with the axis of symmetry leads to the graph of
$x\mapsto |x|$, which is not differentiable at 0.

The following result is a suitably modified  analogue of the one in~\cite{rw-gv1}.


\begin{proposition}
The Reeb foliation of $S^3$ produced by a function $f=f(r)\ (r\ge0)$ is critical for the action
 $\gv:(\omega,T) \mapsto \int_M \eta\wedge d\eta$

\noindent\
$(i)$~in general if and only if $f$ solves the following Cauchy's problem with real parameters $A_0,A_1,A_2:$
\begin{eqnarray}\label{E-cond2}
\nonumber
 && f''' =
 \frac{2 ((f')^2-1)}{(1+(f')^2)f'}\,(f'')^2 + \frac{A_0(1+(f')^2)^{5/2}}{(f')^3}
  , \\
 && f(0)=0,\quad f'(0)=A_1\ne0,\quad f''(0)=A_2.
\end{eqnarray}

\noindent\
$(ii)$~and all variations obeying \eqref{E-cond-int-1}, if and only if $f$ solves the following Cauchy's problem
with real parameters $\lambda,A_1,A_2$ and $A_3:$
\begin{eqnarray}\label{E-cond3}
\nonumber
 && f^{(4)} = \frac{(6(f')^2-7)}{f'(1+(f')^2)}\,f''' f''
 -\frac{2(3(f')^4 {-}9(f')^2+2)}{(1+(f')^2)^2(f')^2}\,(f'')^3
 +\lambda\,\frac{(1+(f')^2)}{(f')^2}\,f'' ,\\
 && f(0)=0,\quad f'(0)=A_1\ne0,\quad f''(0)=A_2,\quad f'''(0)=A_3.
\end{eqnarray}
\end{proposition}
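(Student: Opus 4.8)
The plan is to reduce the abstract critical-point condition \eqref{E-cond0} (and its Lagrange-multiplier refinement \eqref{E-new-eqs}) to an explicit ODE by computing $\eta$, $d\eta$, and the relevant Lie derivatives in the cylindrical coordinates $(r,\theta,t)$ adapted to the Reeb component. Since $\dim M=3$ here, we have $q=1$, so the statement is the $q=1$ specialization already flagged in the Remark following Theorem~\ref{T-main1}: condition \eqref{E-cond0} becomes $({\mathcal L}_{\,T})^3\omega=0$, and \eqref{E-new-eqs} becomes \eqref{E-LT_ge-1}, namely $({\mathcal L}_{\,T})^3\omega=\lambda\,{\mathcal L}_{\,T}\,\omega$. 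So the first step is simply to write down, for the given $\omega(r,t)=\cos\mu(r)\,dt-\sin\mu(r)\,dr$ with $\mu=\arctan f'$, a compatible metric and the transverse unit vector field $T$ spanning ${\cal D}^\bot$, then compute $\eta=\iota_{\,T}\,d\omega$ via \eqref{E-eta}.

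First I would fix the flat metric $g=dr^2+r^2\,d\theta^2+dt^2$ (so that the coordinate frame is orthonormal away from $r=0$), take $T=\omega^\sharp/\|\omega^\sharp\|$ as the unit normal to ${\cal D}=\ker\omega$, and normalize so that $\omega(T)=1$ holds as in \eqref{E-omega-T}. By Lemma~\ref{L-02a} in the case $q=1$ (the Example ``Case $q=1$''), $\eta=k\,N^\flat$ where $k$ is the curvature of the $T$-curves and $N$ the principal normal; so the geometric content is that the computation of $\eta$ amounts to computing the curvature of the integral curves of $T$. Because the entire configuration is rotationally symmetric (everything depends on $r$ through $\mu(r)=\arctan f'(r)$ only, and is independent of $\theta$ and $t$), all the forms reduce to functions of $r$, and the flow of $T$ together with the operator ${\cal L}_T$ becomes an ordinary differential operator in $r$. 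The key computation is therefore to express ${\cal L}_T\,\omega$, $({\cal L}_T)^2\omega$, and $({\cal L}_T)^3\omega$ as one-forms whose coefficients are rational-trigonometric expressions in $\mu,\mu',\mu''$, and then translate back through $\mu=\arctan f'$, i.e. $\mu'=f''/(1+(f')^2)$ and so on, to get expressions in $f',f'',f''',f^{(4)}$.

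For part $(i)$, I would set $({\cal L}_T)^3\omega=0$ and read off the resulting scalar ODE; the chain-rule substitution $\mu=\arctan f'$ should convert the trigonometric relation into the stated form $f'''=\frac{2((f')^2-1)}{(1+(f')^2)f'}(f'')^2+\frac{A_0(1+(f')^2)^{5/2}}{(f')^3}$, with $A_0$ arising as the single constant of integration that survives from the rotational symmetry (the flow-invariance kills one derivative). For part $(ii)$ I would impose instead \eqref{E-LT_ge-1}, $({\cal L}_T)^3\omega=\lambda\,{\cal L}_T\,\omega$, which introduces the Lagrange multiplier $\lambda$ coming from the constraint \eqref{E-cond-int-1}, $\int_M\omega\wedge d\omega=0$; differentiating once more raises the order, yielding the fourth-order equation for $f^{(4)}$. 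The initial conditions $f(0)=0$, $f'(0)=A_1\neq0$, $f''(0)=A_2$ (and $f'''(0)=A_3$ in case $(ii)$) encode the prescribed jet at the singular axis $r=0$, with the hypothesis $f'(0)\neq0$ ensuring the coefficients (which carry powers of $f'$ in the denominator) stay regular there, consistent with the earlier remark that critical Reeb foliations are forced to be singular along $r=0$.

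The main obstacle I expect is the bookkeeping in iterating ${\cal L}_T$: because $T$ is a unit normal with a nonconstant, $r$-dependent direction, each application of ${\cal L}_T$ produces a one-form whose coefficient involves the previous coefficient differentiated \emph{along the flow} of $T$, not simply $\partial_r$; converting between the $T$-arclength derivative and the $r$-derivative carries factors of $\|\omega^\sharp\|$ and $\sin\mu,\cos\mu$, and it is precisely the careful handling of these factors that generates the nonlinear coefficients $\frac{2((f')^2-1)}{(1+(f')^2)f'}$ and $\frac{(6(f')^2-7)}{f'(1+(f')^2)}$ in \eqref{E-cond2} and \eqref{E-cond3}. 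A secondary check is to verify that the improper integrability hypotheses feeding Theorem~\ref{T-main1} (the condition \eqref{E-cond1-Sigma}, and \eqref{E-cond-int-1} for part $(ii)$) are indeed satisfied near $\Sigma=\{r=0\}$ for the relevant exponent $p$ with $(k-1)(p-1)\ge1$, so that the Stokes-type Lemma~\ref{L-sing1} legitimately discards the exact terms $d(\cdots)$ and the reduction to the pointwise Euler--Lagrange ODE is valid.
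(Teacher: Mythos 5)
Your proposal is correct and follows essentially the same route as the paper: reduce to the $q=1$ conditions $({\cal L}_{\,T})^3\omega=0$ (for part (i)) and \eqref{E-LT_ge-1} (for part (ii)), compute $\eta=\iota_{\,T}\,d\omega$ and its iterates explicitly for $T=\cos\mu\,\partial_t-\sin\mu\,\partial_r$ using the rotational symmetry to get an ODE in $r$, then substitute $\mu=\arctan f'$; the constant $A_0$ indeed enters as the first integral of $\big((\mu'\sin^2\mu)'\sin\mu\big)'=0$ and $\lambda$ as the multiplier in $\big((\mu'\sin^2\mu)'\sin\mu\big)'=\lambda\,\mu'\sin\mu$. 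The only slight inaccuracy is attributing the fourth order in (ii) to ``differentiating once more'': rather, the right-hand side $\lambda\,\mu'\sin\mu$ prevents the first integration that lowered the order in case (i), so the equation simply stays at the un-integrated order.
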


\proof
Set~$T(r,t)=\cos\mu(r)\,\partial_t-\sin\mu(r)\,\partial_r$, then $\omega(T)\equiv1$ in $M=D^2\times S^1$.
First we compute
\begin{equation*}
 d\omega = -\mu'\sin\mu(dr\wedge dt),\quad
 \iota_{\,T} (dr\wedge dt) = -\cos\mu\,dr-\sin\mu\,dt.
\end{equation*}
Then we observe that $\gv(\omega,T)=0$:
\[
 \eta =\iota_{\,T}\, d\omega = \mu'\sin\mu(\cos\mu\,dr+\sin\mu\,dt),\quad
 d\eta =(\mu'\sin^2\mu)'\,dr\wedge dt,
\]
therefore,
 $\eta\wedge d\eta =0$.
To verify \eqref{E-cond0} with $q=1$, we then find
\begin{eqnarray*}
 && (\iota_{\,T}\, d)^2\omega = -(\mu'\sin^2\mu)'(\cos\mu\,dr+\sin\mu\,dt),\\
 && (\iota_{\,T}\, d)^3\omega = \big((\mu'\sin^2\mu)'\sin\mu\big)'(\cos\mu\,dr+\sin\mu\,dt).
\end{eqnarray*}

(i)~According to \eqref{E-cond0} with $q=1$, a pair $(\omega,T)$ is critical for the action
$\gv:(\omega,T) \mapsto \int_M \eta\wedge d\eta$
if and only if
$((\mu'\sin^2\mu)'\sin\mu)'\equiv0$ for $r\ge0$, that is
 \begin{equation}\label{E-cond-A1}
 (\mu'\sin^2\mu)'\sin\mu = A_0
\end{equation}
for some $A_0\in\RR$.
The ODE \eqref{E-cond-A1}, using $\mu=\arctan f'$ and $\mu'=f''/(1+(f')^2)$, can be rewritten in terms of Cauchy's problem \eqref{E-cond2}, which has a unique solution.
This way we get a family (depending on $f'(0)=A_1\ne0$) of solutions of \eqref{E-cond2},
see graphs (obtained by Maple program) on Figure~\ref{F-2-7-12} with the value $r_0$ depending on $A_0$.
If $A_0=0$ then
\eqref{E-cond-A1} reduces~to
\begin{equation}\label{E-cond-A10}
 \mu'\sin^2\mu = \tilde A_0
\end{equation}
for another constant $\tilde A_0\in \RR$.
This ODE has the following integral: $ 2\mu -\sin(2\mu) =4\tilde A_0 r +C$.

Notice that $\mu\ne\const$, hence $\tilde A_0\ne0$, because if $\mu = k =\const$ then
$f(r)=(\tan k) r$ has no asymptotes for $r>0$ and does not produce critical foliation.
For $f$, \eqref{E-cond-A10} provides the ODE,
 $f'' = \tilde A_0\big(\frac{(1+(f')^2)}{f'}\,\big)^2$,
with similar to Figure~\ref{F-2-7-12} graphs of solutions with the value $r_0$ depending on $\tilde A_0$.

(ii)~By the above, $(\iota_{\,T}\, d)^3\omega$ is parallel to $\eta$, and \eqref{E-LT_ge-1} holds if the ratio is constant,
\[
 \big((\mu'\sin^2\mu)'\sin\mu\big)' = \lambda \mu'\sin\mu\quad \mbox{\rm for some}\ \lambda\in\RR.
\]
From this, with the little aid of Maple calculations, we yield \eqref{E-cond3}.
\hfill$\square$

\begin{figure}[t]
\begin{center}
\includegraphics[scale=0.68,angle=0]{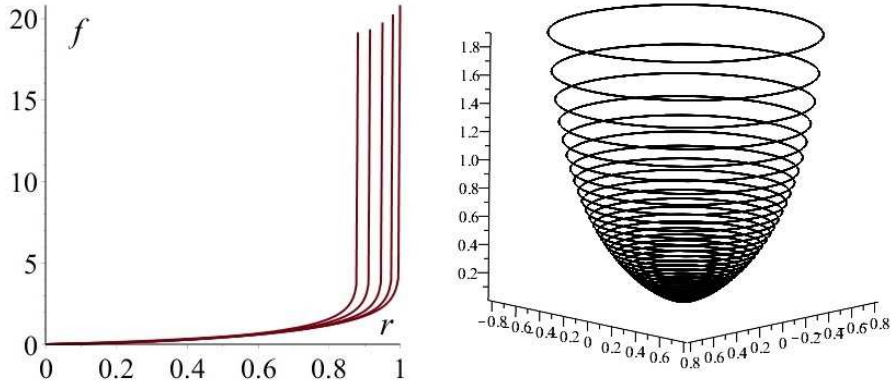}
\caption{\label{F-2-7-12}\small Family of solutions $f(r)$ to \eqref{E-cond2} with
$A_0=1,A_2=0$ and $A_1=i/8\ (i=1\ldots 5)$, producing singular Reeb foliations by rotation about $f$-axis.
 }
\end{center}
\end{figure}


\begin{remark}[The Bott invariant of transversely holomorphic flows]\rm
Let $Y$ be a nonzero on $M^{2q+1}\setminus\Sigma$ vector field.
The flow generated by $Y$ is \textit{transversely holomorphic} if there is a complex structure $J$
on the $2q$-plane bundle $TM/\<Y\>$ invariant under the flow of $Y$.
 Assume that $TM/\<Y\>$ is trivial and there is a pair of pointwise linearly independent decomposable $q$-forms $\omega_1,\omega_2$ on $M\setminus\Sigma$
with a common kernel $\ker\omega_1\cap\,\ker\omega_2$ spanned by $Y$, and such real $2q$-form $\omega_1\wedge\omega_2$ defines the transverse orientation.
If the flow generated by $Y$ is {transversely holomorphic} then
the complex-valued $q$-form $\omega_c=\omega_1+i\,\omega_2$ is \textit{formally integrable} \cite{gp}, i.e.,
\begin{equation}\label{E-C1}
 \omega_c\wedge d\omega_c = 0
 \quad\Longleftrightarrow\quad
 \bigg\{\begin{array}{c}
   \omega_1\wedge d\omega_1 = \omega_2\wedge d\omega_2\,, \\
   \omega_1\wedge d\omega_2 = -\omega_2\wedge d\omega_1\,.
 \end{array}
\end{equation}
For a complex-valued vector field $T_c=T_1+i\,T_2$ we may assume $\omega_j(i\,T_k)=i\,\omega_j(T_k)$, then
\begin{equation}\label{E-omega-Tc}
 \iota_{\,T_c}\,\omega_c=1
  \quad\Longleftrightarrow\quad
 \bigg\{\begin{array}{c}
   \iota_{\,T_1}\,\omega_1 +\iota_{\,T_2}\,\omega_2 = 1\,, \\
   \iota_{\,T_1}\,\omega_2 +\iota_{\,T_2}\,\omega_1 = 0\,.
 \end{array}
\end{equation}
If \eqref{E-C1} holds, then there is a complex-valued one-form $\eta_c=\eta_1+i\,\eta_2$ such that
\begin{equation}\label{E-domega-c}
 d\omega_c=\omega_c\wedge\eta_c,
\end{equation}
moreover, the complex-valued one-form $\eta_c$ in \eqref{E-domega-c} can be chosen by
\begin{equation}\label{E-eta-c}
 \eta_c = \iota_{\,T_c}\,d\omega_c.
\end{equation}
Indeed, we have
\[
 0=\iota_{\,T_c}\,(d\omega_c\wedge\omega_c)=\iota_{\,T_c}\,d\omega_c\wedge\omega_c+d\omega_c.
\]
This and \eqref{E-eta-c} yield \eqref{E-domega-c}.
For formally integrable $\omega_c$, the complex number $\int_M\eta_c\wedge (d\eta_c)^q$, called the \textit{Bott invariant} of the flow of $Y$, is independent of choices.
For a pair $(\omega_c,T_c)$ obeying \eqref{E-omega-Tc}, define
\begin{equation*}
 \gv(\omega_c,T_c):=\int_M\eta_c\wedge (d\eta_c)^q.
\end{equation*}
The following fundamental example belongs to R.~Bott, see e.g. \cite{as2010}.
Consider a holomorphic vector field $X_\lambda=\sum_j \lambda_j z_j \,\partial_{z_j}$ on $\CC^{q+1}$,
where $(z_0,\ldots,z_q)$ are standard coordinates and $\lambda_i\ne0$ are complex numbers.
Let the convex hull of $\lambda_0,\ldots,\lambda_q$ does not contain the origin.
A foliation of $\CC^{q+1}$ by orbits of $X$ induces a one-dimensional foliation (flow) $\calf_\lambda$ of the unit sphere $S^{2q+1}$.
This flow is transversely holomorphic of complex dimension $q$, and
 $\gv(\calf_\lambda)=(\sum\nolimits_j \lambda_j)^{q+1}/\prod\nolimits_j \lambda_j$.
Hence, the Bott invariant (of the flow above) is non-trivial and admits continuous variations.
However, the Godbillon--Vey invariant is rigid under both actual and infinitesimal deformations in the category
of transversely holomorphic foliations, see \cite{as2010}.
 Notice that for real $q$-forms $\alpha,\beta$, the following equalities hold:
 $d(\alpha+i\,\beta)=d\alpha+i\,d\beta$,
 \
 $({\alpha+i\,\beta})^{\,\centerdot}=\overset{\centerdot}\alpha+i\,\overset{\centerdot}\beta$,
 \
 $(\alpha+i\,\beta)^\sharp =\alpha^\sharp+i\,\beta^\sharp$,
 and
 $\iota_{X_1+i\,X_2}\,\alpha = \iota_{X_1}\alpha + i\,\iota_{X_2}\alpha$,
and Lemma~\ref{L-sing1} is valid for $\beta_c=\beta_1+i\,\beta_2$ with $\|\beta\|^2=\|\beta_1\|^2+\|\beta_2\|^2$.
Thus, the results of Sections~\ref{sec:1-1}, \ref{sec:1-2} and \ref{sec:av} (Theorems~\ref{T-crit1} and \ref{T-00}) are valid for complex-valued forms and vector~fields.
\end{remark}

\section{Metric formula for the Godbillon--Vey type invariant}

The~following Godbillon--Vey type functional is defined
for any Riemannian metric on~$(M,{\cal D})$:
\begin{equation}\label{E-gv-invar-g}
 \gv_{\cal D}: g \mapsto -\int_M \|H^\bot\|\cdot(d\eta)^q({\bf T},{\bf B})\,{\rm d}V_g.
\end{equation}
Here
${\bf B}=B_1\wedge\ldots\wedge B_q$,
where $\{B_j\}$ is a local orthonormal frame of~${\cal B}$ such that $({\bf T},N,{\bf B})$ is positive oriented.
This $\gv_{\cal D}$ helps us to study the \textbf{question}:
\textit{What are the best in a sense metrics on a manifold endowed with a distribution ${\cal D}$}?
Such metrics are proposed to be among critical metrics of the action~\eqref{E-gv-invar-g}.
 The~$2q$-form $(d\eta)^q$ can be expressed on $U$ in terms of extrinsic geometry of ${\cal D}$ and~${\cal D}^\bot$.

\begin{theorem}
\label{T-00}
If either ${\cal D}^\bot$ or ${\cal B}$ is integrable then the Godbillon--Vey class of $(\omega,{\bf T})$
can be represented by the form $\eta\wedge (d\eta)^q$ given by
\begin{equation}\label{eq:pw2013}
 (\eta\wedge (d\eta)^q)({\bf T},N,{\bf B})
 =-\|H^\bot\|^{q+1}\det( \<\nabla_{T_i}N,B_j\> - \<h_{B_j,N},T_i\> ).
\end{equation}
\end{theorem}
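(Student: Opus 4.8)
The plan is to evaluate the top-degree form $\eta\wedge(d\eta)^q$ on the adapted frame $({\bf T},N,{\bf B})=(T_1,\dots,T_q,N,B_1,\dots,B_q)$ of $TM$ over the open set $U=\{H^\bot\ne0\}$, exploiting that $\eta$ is, up to scale, dual to the principal normal $N=H^\bot/\|H^\bot\|$. First I would combine Lemma~\ref{L-02a} with $H^\bot=\|H^\bot\|\,N$ to get $\eta=(-1)^{q-1}\|H^\bot\|\,N^\flat$, so that $\eta(T_i)=\eta(B_j)=0$ and $\eta(N)=(-1)^{q-1}\|H^\bot\|$, since $N$ is orthogonal to every $T_i$ and every $B_j$. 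Expanding the wedge of the $1$-form $\eta$ with $(d\eta)^q$ along the frame, only the slot occupied by $N$ survives, giving
\[
 (\eta\wedge(d\eta)^q)({\bf T},N,{\bf B})=(-1)^q\eta(N)\,(d\eta)^q({\bf T},{\bf B})=-\|H^\bot\|\,(d\eta)^q({\bf T},{\bf B}).
\]
This reduces the whole statement to evaluating the $2q$-form $(d\eta)^q$ on the $2q$ vectors $(T_1,\dots,T_q,B_1,\dots,B_q)$.

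For that, I would form the antisymmetric $2q\times2q$ matrix $\Omega_{ab}=d\eta(E_a,E_b)$, $(E_1,\dots,E_{2q})=(T_1,\dots,T_q,B_1,\dots,B_q)$, which splits into a $T$--$T$ block, a $T$--$B$ block $A_{ij}=d\eta(T_i,B_j)$, and a $B$--$B$ block. The wedge power $(d\eta)^q$ on these vectors is, up to a universal combinatorial constant, the Pfaffian of $\Omega$. The key structural point is that in any perfect matching of $\{1,\dots,2q\}$ the number of $T$--$T$ edges equals the number of $B$--$B$ edges (a parity count, since each index is used once); hence a matching with no $T$--$T$ edge has no $B$--$B$ edge and consists solely of $T$--$B$ edges. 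Consequently, as soon as \emph{one} diagonal block vanishes, $\mathrm{Pf}(\Omega)$ collapses to $\pm\det(A)$ irrespective of the other block. This is precisely why the hypothesis ``\emph{either} ${\cal D}^\bot$ \emph{or} ${\cal B}$ integrable'' is enough.

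Next I would verify the two vanishing statements and compute $A$. Writing $H^\bot=\|H^\bot\|\,N$ in \eqref{eq1:pw2013b}, the terms carrying derivatives of $\|H^\bot\|$ drop, because $N\perp T_i$ and $N\perp B_j$, leaving $d\eta(X,Y)=(-1)^{q-1}\|H^\bot\|(\<\nabla_X N,Y\>-\<\nabla_Y N,X\>)$ whenever $X,Y\in\{T_i,B_j\}$. For $X=T_i,\ Y=B_j$, using $\<N,T_i\>=0$ and $h_{B_j,N}=(\nabla_{B_j}N)^\bot$ gives $\<\nabla_{B_j}N,T_i\>=\<h_{B_j,N},T_i\>$, hence $A_{ij}=(-1)^{q-1}\|H^\bot\|(\<\nabla_{T_i}N,B_j\>-\<h_{B_j,N},T_i\>)$, i.e. $(-1)^{q-1}\|H^\bot\|$ times exactly the matrix $M_{ij}$ of \eqref{eq:pw2013}. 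For the diagonal blocks, torsion-freeness turns $d\eta(T_i,T_j)$ into a multiple of $\<N,[T_i,T_j]^\top\>$, which vanishes precisely when ${\cal D}^\bot$ is integrable, and $d\eta(B_i,B_j)$ into a multiple of $\<N,[B_i,B_j]\>$, which vanishes precisely when ${\cal B}$ is integrable (here $B_i,B_j,N\in{\cal D}$ with $B_i,B_j\perp N$). Assembling, $\det(A)=((-1)^{q-1}\|H^\bot\|)^q\det(M_{ij})=\|H^\bot\|^q\det(M_{ij})$, since $(-1)^{q(q-1)}=1$; together with the first step this produces $-\|H^\bot\|^{q+1}\det(M_{ij})$.

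I expect the main obstacle to be bookkeeping rather than conceptual: pinning down the precise constant and sign relating the wedge power $(d\eta)^q$ evaluated on a frame to $\mathrm{Pf}(\Omega)$ and then to $\det(A)$ (the factor from the $q$-fold power together with the sign $(-1)^{q(q-1)/2}$ of the antidiagonal-block Pfaffian), and matching it to the normalization under which \eqref{eq:pw2013} is stated. The case $q=1$, where the constant and sign are trivial and the identity reads $(\eta\wedge d\eta)(T,N,B)=-\|H^\bot\|^2(\<\nabla_{T}N,B\>-\<h_{B,N},T\>)$, fixes this normalization and serves as a check. One should also keep in mind throughout that \eqref{eq:pw2013} is a pointwise identity on $U$, where $N$, the binormal distribution ${\cal B}$, and the adapted frame are defined.
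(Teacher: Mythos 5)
Your proposal is correct and follows essentially the same route as the paper: reduce to $(d\eta)^q({\bf T},{\bf B})$ using $\eta=(-1)^{q-1}\|H^\bot\|\,N^\flat$, kill the $T$--$T$ (resp.\ $B$--$B$) block of $d\eta$ via the integrability hypothesis (this is the paper's Lemma~\ref{L-02}, whose relevant entries you re-derive directly from \eqref{eq1:pw2013b}), and collapse $(d\eta)^q({\bf T},{\bf B})$ to the determinant of the mixed block $[\,d\eta(T_i,B_j)\,]$. On the one point you flag as delicate you are in fact more careful than the paper, whose proof writes $(d\eta)^q({\bf T},{\bf B})=\sum_{\sigma}{\rm sign}(\sigma)\,d\eta(T_1,B_{\sigma_1})\cdots d\eta(T_q,B_{\sigma_q})$ with no combinatorial factor, silently absorbing the $q!\,(-1)^{q(q-1)/2}$ Pfaffian normalization that your matching-parity argument makes explicit.
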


\begin{proof}
If either ${\cal D}^\bot$ or ${\cal B}$ is integrable,  then -- respectively --
either $d\eta(T_i,T_j)$ or $d\eta(B_i,B_j)$ vanish, see Lemma~\ref{L-02} in what follows. Hence,
\begin{equation*}
 (d\eta)^q({\bf T},{\bf B}) = \sum\nolimits_{\sigma\in S_q} {\rm sign}(\sigma)\cdot
 d\eta(T_1,B_{\sigma_1})\cdot\ldots\cdot d\eta(T_q, B_{\sigma_q}),
\end{equation*}
where $S_q$ is the group of all permutations of the set $\{1,\ldots,q\}$.
By Lemma~\ref{L-02} again,
\eqref{eq:pw2013} holds.
\end{proof}

\begin{remark}\rm
For $q=1$, the vector field $T$ spans ${\cal D}^\bot$ on $(M^3,g)$.
Let $\{T,N,B\}$ be the Fren\'{e}t frame, and $k$ and $\tau$ the curvature and the torsion of $T$-curves.
Then \eqref{eq:pw2013} reads as
\[
 (\eta\wedge d\eta)(T,N,B)=-k^2(\tau - h_{B,N}),
\]
see \cite{rw-gv1} and, for integrable ${\cal D}$, \cite{rw73}.
\end{remark}

\begin{remark}\rm
In \cite{rw-1} we introduced geometric invariants for $n$-tuples, $n\in\mathbb{N}$, of square matrices, or, endomorphisms.
Given $n$ such matrices ${\bf A}=(A_1,\ldots,A_n)$, consider the polynomial of $n$ variables,
\begin{equation}\label{E-sigmaAn}
 P({\bf A})(t_1,\ldots,t_n)=\det(\id+t_1A_1+\ldots+t_nA_n);
\end{equation}
the coefficients
$\sigma_\lambda({\bf A}), \lambda=(\lambda_1,\ldots,\lambda_n)\ge0$
at $t_1^{\lambda_1}\cdot\ldots\cdot t_n^{\lambda_n}$ are invariants of our set of~matrices.
Set $|\lambda|=\lambda_1+\ldots+\lambda_n$. Taking $t_1=\ldots=t_n=t$ and comparing the coefficients at $t^q$, this yields
\begin{equation*}
 \det(A_1+\ldots+A_n) = \sum\nolimits_{|\lambda|=q}\sigma_\lambda({\bf A}).
\end{equation*}
Formula \eqref{eq:pw2013} allows to express the form $\eta\wedge(d\eta)^q$
in terms of invariants $\sigma_\lambda(A_1,A_2)$ of \textit{two liner transformations} depending on the extrinsic geometry
of the almost product structure $({\cal D},{\cal D}^\bot)$ on~$(M,g)$:
\[
 A_1:X \to  \nabla^\top_X N,\quad
 A_2:Y \to \nabla^\top_Y N = -h(Y,N);
\]
hereafter, $\nabla^\top$ denotes the connection in the bundle ${\cal D}$ generated by the Levi-Civita connection on
$(M,g)$. These maps transform one of the spaces ${\cal B}$ and ${\cal D}^\bot$, into another and are
represented in positive oriented orthonormal frames by $q$-by-$q$ matrices. With this notation,
using \eqref{E-sigmaAn} with $n=2$, i.e., $\det(A_1+A_2)=\sum\nolimits_{k+l=q}\sigma_{k,l}(A_1,A_2)$, \eqref{eq:pw2013} reads as
\[
 (\eta\wedge(d\eta)^q)({\bf T},N,{\bf B}) =-\|H^\bot\|^{q+1}\sum\nolimits_{k+l=q}\sigma_{k,l}(A_1,A_2).
\]
\end{remark}

The~\textit{integrability tensor} ${\cal T}$ of ${\cal D}$ (vanishing for ${\cal D}$ tangent to a foliation)~is
\[
 2\,{\cal T}_{X,Y} = [X,Y]^\bot = h_{X,Y}-h_{Y,X},\quad X,Y\in{\cal D}.
\]
Let $B_i\ (i\le q)$ be an orthonormal local basis of ${\cal B}$.
Set $\Div^\bot Q =\sum\nolimits_{j} \<\nabla_{T_j}Q, T_j\>$ for a tensor field~$Q$.

\begin{lemma}\label{L-02}
The 2-form $d\eta$ attains the following values on $U$:
\begin{eqnarray}\label{E-d-eta}
\nonumber
 && (-1)^{q-1} d\eta(N,B_i) = \|H^\bot\|\,\<\nabla_N\,N, B_i\> -B_i(\|H^\bot\|) \\
\nonumber
 && = 2 \<h_{N,N},{\cal T}_{N,B_i}\> -2\Div^\bot {\cal T}_{N,B_i}  \\
 && + 2\sum\nolimits_{j}\big( \<{\cal T}_{(\nabla_{T_j}N)^\top,B_i}, T_j\> +\<h_{B_j,B_i},{\cal T}_{N,B_j}\> - \<h_{B_j,N},{\cal T}_{B_i,B_j}\>\big)
\end{eqnarray}
and
\begin{eqnarray}\label{E-d-eta-2}
\nonumber
 && (-1)^{q-1} d\eta(T_i,B_j) = \|H^\bot\|( \<\nabla_{T_i}N, B_j\>- \<h_{B_j,N},T_i\>),\\
\nonumber
 &&   (-1)^{q-1} d\eta(B_i,B_j) = -\|H^\bot\|\cdot \<[B_i,B_j],N\>, \\
 \nonumber
 && (-1)^{q-1} d\eta(T_i,T_j) = -\|H^\bot\|\cdot \<[T_i,T_j],N\>,\\
 && (-1)^{q-1} d\eta(T_i,N) = T_i(\|H^\bot\|) - \|H^\bot\| \<h_{N,N}, T_i\>.
\end{eqnarray}
\end{lemma}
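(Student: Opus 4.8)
The plan is to read off every value of $d\eta$ from the single identity \eqref{eq1:pw2013b} of Lemma~\ref{L-02a}, namely $(-1)^{q-1} d\eta(X,Y) = \<\nabla_X H^\bot, Y\> - \<\nabla_Y H^\bot, X\>$, combined with the decomposition $H^\bot = \|H^\bot\|\,N$ (the very definition of the principal normal $N$). Expanding $\nabla_X H^\bot = X(\|H^\bot\|)\,N + \|H^\bot\|\,\nabla_X N$ by the Leibniz rule gives
\[
 (-1)^{q-1} d\eta(X,Y) = X(\|H^\bot\|)\<N,Y\> - Y(\|H^\bot\|)\<N,X\> + \|H^\bot\|\big(\<\nabla_X N, Y\> - \<\nabla_Y N, X\>\big),
\]
and I would then feed in pairs from the orthonormal frame $\{T_i, N, B_j\}$ of $TM$, remembering that $N, B_j \in {\cal D}$, that $\{T_i\}$ spans ${\cal D}^\bot$, and that $\<N,N\>=1$ while $\<N, B_j\> = \<N, T_i\> = 0$.

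First I would dispose of the four routine identities in \eqref{E-d-eta-2} and the first equality in \eqref{E-d-eta}. For $(X,Y) = (T_i,T_j)$ and $(X,Y)=(B_i,B_j)$ the first two terms drop, and metric compatibility with torsion-freeness give $\<\nabla_X N,Y\> - \<\nabla_Y N,X\> = -\<N,[X,Y]\>$, yielding the two bracket formulas. For $(X,Y)=(T_i,N)$ one uses $\<\nabla_{T_i}N,N\> = \frac12 T_i\<N,N\> = 0$ and $\<\nabla_N N, T_i\> = \<h_{N,N}, T_i\>$ (since $T_i \in {\cal D}^\bot$). For $(X,Y)=(T_i,B_j)$ one rewrites $\<\nabla_{B_j}N, T_i\> = \<h_{B_j,N}, T_i\>$. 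Finally $(X,Y)=(N,B_i)$ produces at once $\|H^\bot\|\<\nabla_N N, B_i\> - B_i(\|H^\bot\|)$, the first form of \eqref{E-d-eta}.

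The substantial step is the second expression for $d\eta(N,B_i)$, which unfolds $\|H^\bot\|\<\nabla_N N, B_i\> - B_i(\|H^\bot\|)$ into divergence and second-fundamental-form data. Here I would \emph{not} use $H^\bot = \|H^\bot\| N$, but instead insert the defining formula $H^\bot = \sum_j (\nabla_{T_j} T_j)^\top$ into \eqref{eq1:pw2013b} and expand $\sum_j\big(\<\nabla_N (\nabla_{T_j}T_j)^\top, B_i\> - \<\nabla_{B_i}(\nabla_{T_j}T_j)^\top, N\>\big)$. Splitting each inner $\nabla_{T_j}T_j$ into its ${\cal D}$- and ${\cal D}^\bot$-parts and applying the Leibniz rule produces pairings of the second fundamental forms $h$ and $h^\bot$ together with the second covariant derivatives $\nabla_N\nabla_{T_j}T_j$ and $\nabla_{B_i}\nabla_{T_j}T_j$. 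To create the divergence I would commute $\nabla_N$ and $\nabla_{B_i}$ past $\nabla_{T_j}$; the surviving $\sum_j\nabla_{T_j}(\cdots)$ must assemble into $-2\,\Div^\bot {\cal T}_{N,B_i}$ once every normal bracket of ${\cal D}$-vectors is traded for the integrability tensor via $2{\cal T}_{X,Y} = [X,Y]^\bot$ (which also explains the overall factor $2$).

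The main obstacle is exactly this reorganization. Commuting the derivatives generates the curvature terms $\sum_j\<R(N,T_j)T_j,B_i\>$ and $\sum_j\<R(B_i,T_j)T_j,N\>$, which I expect to cancel identically by the pair symmetry and the two antisymmetries of the Riemann tensor; indeed $\<R(B_i,T_j)T_j,N\> = \<R(N,T_j)T_j,B_i\>$, so no curvature survives in \eqref{E-d-eta}. The remaining labour is to verify that the leftover terms $\nabla_{[N,T_j]}T_j$, $\nabla_{[B_i,T_j]}T_j$ and the Leibniz remainders regroup precisely into $2\<h_{N,N},{\cal T}_{N,B_i}\>$ and the three sums $\sum_j\big(\<{\cal T}_{(\nabla_{T_j}N)^\top,B_i},T_j\> + \<h_{B_j,B_i},{\cal T}_{N,B_j}\> - \<h_{B_j,N},{\cal T}_{B_i,B_j}\>\big)$. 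I would carry this out in the fixed orthonormal frame $\{T_j,N,B_k\}$, reducing everything to scalar connection coefficients so that the cancellations and the final collection of terms become a finite, if tedious, bookkeeping exercise.
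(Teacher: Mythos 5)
Your proposal is correct, and for most of the lemma it coincides in substance with the paper's argument: the paper obtains the first equality of \eqref{E-d-eta} and all of \eqref{E-d-eta-2} from the Cartan formula $d\eta(X,Y)=X(\eta(Y))-Y(\eta(X))-\eta([X,Y])$ applied to $\eta=(-1)^{q-1}\|H^\bot\|\,N^\flat$, which is interchangeable with your use of \eqref{eq1:pw2013b} plus the Leibniz rule on $H^\bot=\|H^\bot\|N$; your frame evaluations ($\<\nabla_N N,T_i\>=\<h_{N,N},T_i\>$, $\<\nabla_{B_j}N,T_i\>=\<h_{B_j,N},T_i\>$, and the bracket identities via metric compatibility) are all right. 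Where you genuinely diverge is the second equality of \eqref{E-d-eta}. The paper runs that computation from the divergence end: it differentiates $\<[N,B_i],T_j\>$ along $T_j$ and sums over $j$, so that $2\,\Div^\bot{\cal T}_{N,B_i}$ is present essentially by definition from the first line; it then expands $[N,B_i]=\nabla_N B_i-\nabla_{B_i}N$, commutes $\nabla_{T_j}$ past $\nabla_N$ and $\nabla_{B_i}$, and evaluates with Fren\'et-type formulas until $B_i(\|H^\bot\|)-\|H^\bot\|\<\nabla_N N,B_i\>$ and the ${\cal T}$- and $h$-terms emerge. You run the same commutation in reverse, differentiating $H^\bot=\sum_j(\nabla_{T_j}T_j)^\top$ inside \eqref{eq1:pw2013b} and working toward the divergence; your curvature cancellation $\<R(B_i,T_j)T_j,N\>=\<R(N,T_j)T_j,B_i\>$ is correct and is exactly the cancellation the paper exploits (there packaged via the first Bianchi identity as $-R_{N,B_i}T_j$ paired against $T_j$, which vanishes by antisymmetry). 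The trade-off is clear: in the paper's direction the term $\Div^\bot{\cal T}_{N,B_i}$ comes for free and the labour lies in producing the $\nabla H^\bot$ side, while in yours $\nabla H^\bot$ comes for free and the labour lies in reassembling the divergence, which forces additional metric-compatibility swaps to convert second derivatives of the $T_j$ into $T_j$-derivatives of $[N,B_i]^\bot$ --- that is where most of your deferred bookkeeping actually sits. You leave that final regrouping asserted rather than executed, but this is the same level of elision as the paper's own ``after a lengthy calculation'', and nothing in your plan would fail.
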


\begin{proof}  Using \eqref{eq1:pw2013}, one gets the first equality of \eqref{E-d-eta},
\begin{eqnarray*}
 d\eta (N, B_i) \eq N(\eta(B_i)) - B_i(\eta(N)) - \eta([N,B_i])\\
 \eq  (-1)^{q-1}\big(\|H^\bot\|\,\<\nabla_N N, B_i\> - B_i(\|H^\bot\|)\big).
\end{eqnarray*}
Differentiating $\<[N, B_i], T_j\>$ in the $T_j$-direction, after a lengthy calculation using symmetries of the curvature tensor $R$, yields
\begin{eqnarray*}
 && \sum\nolimits_{j} T_j\,(\<[N,B_i],T_j)\> = \<[N,B_i], H^\bot\>
 +\sum\nolimits_{j} \<\nabla_N\nabla_{T_j}\,B_i\\
 &&  +\nabla_{[T_j,N]}B_i +R_{T_j,N}B_i -\nabla_{B_i}\nabla_{T_j}\,N -\nabla_{[T_j,B_i]}\,N -R_{T_j,B_i}N,\ T_j\> \\
  && =\|H^\bot\|\,\<[N,B_i], N\> + \sum\nolimits_{j} \< -\nabla_N(\tau_{ji}N)
  +\nabla_{ A_jN  + \sum\nolimits_{l}(\tau_{jl}B_l-k_{jl} T_l)}B_i \\
 &&  +\nabla_{B_i}(\sum\nolimits_{l}k_{jl}T_l) -\nabla_{B_i}(\sum\nolimits_{l}\tau_{jl}B_l) -\nabla_{A_jB_i -\tau_{ji}N}\,N -R_{N,B_i}T_j,\ T_j\> \\
 && = B_i(\|H^\bot\|) - \|H^\bot\|\,\<\nabla_N  N, B_i\> + 2 \<h_{N,N},{\cal T}_{N,B_i}\> \\
 && +\, 2\sum\nolimits_{j}\big( \<{\cal T}_{(\nabla_{T_j}N)^\top,B_i}, T_j\>
 +\<h_{B_j,B_i},{\cal T}_{N,B_j}\> -\<h_{B_j,N},{\cal T}_{B_i,B_j}\>\big).
\end{eqnarray*}
Here we used
$A_j X:=-(\nabla_X T_j)^\top\ (X\in{\cal D})$ and Fren\'{e}t type formulas for ${T}_i$-derivatives:
\begin{eqnarray*}
 \nabla_{T_i}\,T_j \eq -k_{ij}N+\sum\nolimits_{k}s_{ijk}B_k,\quad
 \nabla_{T_i}\,N = -k_{ij}T_j+\sum\nolimits_{j}\tau_{ij}B_j,\\
 \nabla_{T_i}\,B_j \eq -\tau_{ij}N -\sum\nolimits_{k}s_{ikj}B_k
\end{eqnarray*}
with certain functions $k_{ij}$ and $s_{ikj}$. Note that $\sum_i k_{ii}=\< H^\bot, N\>$.
From this, the definition of ${\cal T}$ and
\[
 2\Div^\bot{\cal T}_{N,B_i} = 2\sum\nolimits_{j} \<\nabla_{T_j}({\cal T}_{N,B_i}), T_j\> = \sum\nolimits_{j} T_j(\<[N,B_i]^\bot, T_j)\>
\]
we deduce the second equality of \eqref{E-d-eta}. Next,
\[
 d\eta(T_i,B_j)=T_i(\eta(B_j))-B_j(\eta(T_i))-\eta([T_i,B_j])=(-1)^{q}\|H^\bot\|\,\<[T_i,B_j],N\>,
\]
from which \eqref{E-d-eta-2}$_{1}$ follows. The~proofs of \eqref{E-d-eta-2}$_{2,3,4}$ are also straightforward.
\end{proof}

\begin{lemma}\label{C-02}
Let ${\cal D}=\ker\omega$ be a codimension $q$ distribution on $M^{2q+1}$ and $g\in{\rm Riem}(M,{\cal D},{\bf T})$.

$(i)$~If $T_i(t)=C^j_i(t)\,T_j$ and $\omega(t)=\det C(t)^{-1}\omega$ for some $C(t):M\to GL(q,\RR)$ such that $C(0):M\to{\rm id}_q$,  then
\begin{eqnarray}\label{eq:rel_gv1}
\nonumber
 (\eta\wedge (d\eta)^q)^{\,\centerdot} \eq (q+1)\sum\nolimits_{i}(-1)^{i} T_i(\tr\dot C)
 \,(d\eta)^q(\widehat{\bf T}_i,N,{\bf B})\,{\rm d}V_g  \\
 && +\,d(\dot\eta\wedge\eta\wedge(d\eta)^{q-1} +(q+1)\,(\tr\dot C)\,(d\eta)^q) .
\end{eqnarray}

$(ii)$~If $T_i(t)=T_i+X_i(t)$ for $X_i(t)\in\mathfrak{X}_{\cal D}\ (|t|<\eps)$ such that $X_i(0)=0$
and $\omega(t)=\omega$, then
\begin{eqnarray}\label{eq:d_gv2}
\nonumber
 && ({\eta\wedge (d\eta)^q})^{\,\centerdot} = (q+1)\big[ d\omega(\dot{\bf X},N)\,(d\eta)^{q}({\bf B},{\bf T})\\
 && +\sum\nolimits_{i}(-1)^{i-1} d\omega(\dot{\bf X},T_i)\,(d\eta)^{q}(\widehat{\bf T}_i,N,{\bf B}) \\
\nonumber
 && + \sum\nolimits_{k}(-1)^{q+k}d\omega(\dot{\bf X},B_k)\,(d\eta)^{q}({\bf T},N,\widehat{\bf B}_k)
 \big]\,{\rm d}V_g -d(\eta\wedge(d\eta)^{q-1}\wedge\iota_{\,\dot{\bf X}}\,d\omega).
\end{eqnarray}
\end{lemma}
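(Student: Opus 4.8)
The statement is Lemma~\ref{C-02}, which computes the $t$-derivative of the Godbillon--Vey integrand $\eta\wedge(d\eta)^q$ under the two distinguished variations (i) and (ii)---precisely the variations of type (i) and (ii)$_2$ discussed earlier. The plan is to differentiate $\eta\wedge(d\eta)^q$ using the general first-variation machinery already established, namely $(\eta\wedge(d\eta)^q)^{\,\centerdot}=(q+1)\,\dot\eta\wedge(d\eta)^q+d(\dot\eta\wedge\eta\wedge(d\eta)^{q-1})$, which is exactly the order-$t$ term read off from the Taylor expansion in the Lemma preceding \eqref{E-gv-omegat}. The task then reduces to (a) computing $\dot\eta$ explicitly for each variation, and (b) peeling off an exact form so that the surviving term is written against the distinguished basis $({\bf T},N,{\bf B})$ and multiplied by $\mathrm{d}V_g$.

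\textbf{Part (i).} Here I would reuse the computation in the proof of the Proposition on case (i): for $T_i(t)=C_i^j(t)T_j$ and $\omega(t)=\det C(t)^{-1}\omega$, one has $\tilde\eta=\eta-d(\log\det C(t))$. Differentiating at $t=0$ with $C(0)=\mathrm{id}_q$ gives $\dot\eta=-d(\tr\dot C)$, since $(\log\det C)^{\,\centerdot}=\tr\dot C$. The key point is that \emph{without} the constraint $T_i(c)=0$ the function $\tr\dot C$ need not be ${\cal D}^\bot$-constant, so $\dot\eta$ survives. Substituting $\dot\eta=-d(\tr\dot C)$ into $(q+1)\dot\eta\wedge(d\eta)^q$ and integrating by parts, $-(q+1)\,d(\tr\dot C)\wedge(d\eta)^q=(q+1)\,d((\tr\dot C)(d\eta)^q)$ minus nothing since $(d\eta)^{q+1}=0$ by dimension, I would move the exact part into the divergence term and evaluate the remaining contraction of $d(\tr\dot C)\wedge(d\eta)^q$ against the volume form. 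Expanding $d(\tr\dot C)=\sum_i T_i(\tr\dot C)\,T_i^\flat+(\text{${\cal D}$-part})$ and noting that only the ${\cal D}^\bot$-components pair nontrivially with $(d\eta)^q({\bf T},\ldots)$ yields the stated sum $\sum_i(-1)^i T_i(\tr\dot C)(d\eta)^q(\widehat{\bf T}_i,N,{\bf B})$.

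\textbf{Part (ii).} For $T_i(t)=T_i+X_i(t)$ with $X_i\in\mathfrak{X}_{\cal D}$ and $\omega$ fixed, I would use $\dot\eta=\iota_{\,\dot{\bf T}}\,d\omega$ from \eqref{E-gv-omegat-dot} (the $\dot\omega$ term drops out), where $\dot{\bf T}=\sum_i(-1)^{i-1}\dot X_i\wedge\widehat{\bf T}_i=:\dot{\bf X}$ in the notation of the statement. Then $(q+1)\dot\eta\wedge(d\eta)^q=(q+1)\,\iota_{\,\dot{\bf X}}d\omega\wedge(d\eta)^q$, and after extracting the exact term $-d(\eta\wedge(d\eta)^{q-1}\wedge\iota_{\,\dot{\bf X}}d\omega)$ (using Lemma~\ref{L-iota-ab} and $d(d\eta)^q=0$ by integrability-type arguments on $(d\eta)^q$), the surviving $(2q+1)$-form is $\iota_{\,\dot{\bf X}}d\omega\wedge(d\eta)^q$ evaluated on $({\bf T},N,{\bf B})$. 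The main obstacle, and the only genuinely computational step, is expanding this contraction on the frame $({\bf T},N,{\bf B})$: since $\dot{\bf X}$ is a $q$-vector tangent to ${\cal D}$, $\iota_{\,\dot{\bf X}}d\omega$ is a one-form, and its value on $N$, on each $T_i$, and on each $B_k$ produces the three families of terms $d\omega(\dot{\bf X},N)$, $d\omega(\dot{\bf X},T_i)$, $d\omega(\dot{\bf X},B_k)$, each multiplied by the complementary $(d\eta)^q$ on the remaining $q{+}1$ frame vectors with the correct Koszul signs $(-1)^{i-1}$ and $(-1)^{q+k}$. Carefully bookkeeping these signs from the permutation expansion of the wedge is where most care is needed; everything else follows from the first-variation formula and Stokes/Lemma~\ref{L-sing1}.
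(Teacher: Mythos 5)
Your part (ii) is essentially the paper's own proof: take $\dot\eta=\iota_{\,\dot{\bf X}}\,d\omega$ from \eqref{E-gv-omegat-dot}, apply the first-variation identity $({\eta\wedge (d\eta)^q})^{\,\centerdot} = (q+1)\,\iota_{\,\dot{\bf X}}\,d\omega\wedge(d\eta)^{q} -d(\eta\wedge(d\eta)^{q-1}\wedge \iota_{\,\dot{\bf X}}\,d\omega)$, and expand $\iota_{\,\dot{\bf X}}\,d\omega\wedge(d\eta)^q$ on the frame $({\bf T},N,{\bf B})$; the sign bookkeeping you defer is exactly the content of the paper's displayed computation, so this half is fine.

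Part (i), however, has a genuine gap. You claim $\dot\eta=-d(\tr\dot C)$ by reusing the earlier Proposition, but that Proposition's identity $\tilde\eta=\eta-d(\log\det C)$ was derived \emph{under the hypothesis} that $\det C$ is constant along ${\cal D}^\bot$-curves, i.e. $T_i(\det C)=0$ --- precisely the hypothesis that Lemma~\ref{C-02}(i) drops. Without it, differentiating $\eta_t=\iota_{\,{\bf T}_t}\,d\omega_t$ with $\omega_t=(\det C_t)^{-1}\omega$ and ${\bf T}_t=(\det C_t)\,{\bf T}$ gives
\[
 \dot\eta=-\,\iota_{\,\bf T}\big(d(\tr\dot C)\wedge\omega\big)
 =\sum\nolimits_i(-1)^{i}\,T_i(\tr\dot C)\,\iota_{\,\widehat{\bf T}_i}\omega\ \pm\ d(\tr\dot C),
\]
so besides the exact piece there are terms proportional to $T_i(\tr\dot C)$, and it is exactly these terms that, after wedging with $(d\eta)^q$ and evaluating on $({\bf T},N,{\bf B})$, produce the non-exact first term of \eqref{eq:rel_gv1}; this is what the paper computes. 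With your $\dot\eta$ the whole variation would be exact, since $d(\tr\dot C)\wedge(d\eta)^q=d\big((\tr\dot C)\,(d\eta)^q\big)$ (note also your sign in this identity is off), so your argument would show that $(\eta\wedge(d\eta)^q)^{\,\centerdot}$ is an exact form for \emph{every} variation of type (i), contradicting \eqref{eq:rel_gv1} and rendering the Euler--Lagrange equation \eqref{E-EL-1} later derived from it vacuous. Your subsequent move of ``evaluating the remaining contraction'' of $d(\tr\dot C)\wedge(d\eta)^q$ against the frame is inconsistent with having already absorbed that same form into the divergence term: one and the same term cannot be counted both as an exact form discarded by Stokes and as the surviving non-exact residue. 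The fix is to compute $\dot\eta$ honestly from \eqref{E-gv-omegat-dot} as above.
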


\begin{proof} Notice that in both cases, (i) and (ii), \eqref{E-omega-T-t} holds.

(i)~We have $\dot T_i=\dot C^j_i\,T_j$ and $\dot\omega=-\dot c\,\omega$, where $ c(t)=\det C(t)$.
By Jacobi's formula, that expresses the derivative of
$\det C$ in terms of the adjunct of $C$ and the derivative of~$C$, and conditions, we have $\dot c=\tr\dot C$.
Since \eqref{E-gv-omegat-dot} and $\iota_{\,N}\,\omega=\iota_{\,B_k}\,\omega=0$, the following equalities provide~\eqref{eq:rel_gv1}:
\begin{eqnarray*}
 ({\eta\wedge (d\eta)^q})^{\,\centerdot}
 \eq (q+1)\sum\nolimits_{i}(-1)^i T_i(\dot c)\,\iota_{\,\widehat{\bf T}_i}\omega\wedge (d\eta)^q\\
 \plus d\big(\eta\wedge\dot\eta\wedge (d\eta)^q -(q+1)\,\dot c\,(d\eta)^q\big),\\
 (\iota_{\,\widehat{\bf T}_i}\omega\wedge (d\eta)^q)({\bf T},N,{\bf B})
 \eq\omega({\bf T})\,(d\eta)^q(\widehat{\bf T}_i,N,{\bf B})
 =(d\eta)^q(\widehat{\bf T}_i,N,{\bf B}).
\end{eqnarray*}

(ii) We have  $\dot{\bf X} = \sum\nolimits_{i}(-1)^{i-1} \dot X_i\wedge \widehat{\bf T}_i$,
$\dot X_i=\dot T_i$ and $\dot\omega=0$.
Since $\dot\eta=\iota_{\,\dot{\bf X}}\,d\omega$, see \eqref{E-gv-omegat-dot}, we get
\begin{equation*}
 ({\eta\wedge (d\eta)^q})^{\,\centerdot} = (q+1)\,\iota_{\,\dot{\bf X}}\,d\omega\wedge(d\eta)^{q}
 -d(\eta\wedge(d\eta)^{q-1}\wedge \iota_{\,\dot{\bf X}}\,d\omega).
\end{equation*}
Then
we obtain
\begin{eqnarray*}
 \iota_{\,\dot{\bf X}}\,d\omega\wedge(d\eta)^{q}({\bf T},N,{\bf B}) \eq d\omega(\dot{\bf X},N)\,(d\eta)^{q}({\bf B},{\bf T})\\
 \plus\sum\nolimits_{i}(-1)^{i-1} d\omega(\dot{\bf X},T_i)\,(d\eta)^{q}(\widehat{\bf T}_i,N,{\bf B}) \\
 \plus \sum\nolimits_{k}(-1)^{q+k}d\omega(\dot{\bf X},B_k)\,(d\eta)^{q}({\bf T},N,\widehat{\bf B}_k) ,
\end{eqnarray*}
and \eqref{eq:d_gv2} follows from the above.
\end{proof}

\begin{remark}\rm
If the distribution ${\cal B}$ is integrable then the factor in the right hand side of \eqref{eq:rel_gv1}, see also \eqref{eq:d_gv2}, is
\begin{equation}\label{eq:rel_gv1a}
 (d\eta)^q(\widehat{\bf T}_i,N,{\bf B})=\sum\nolimits_{j}(-1)^{j}\,d\eta(N,B_j)\,(d\eta)^{q-1}(\widehat{\bf T}_i,\widehat{\bf B}_j),
\end{equation}
and if, in addition, the distribution ${\cal D}^\bot$ is integrable then the factor in the right hand side of \eqref{eq:rel_gv1a} is
\begin{equation*}
 (d\eta)^{q-1}(\widehat{\bf T}_i,\widehat{\bf B}_j) {=}
 \sum\nolimits_{\sigma\in S^j_{q-1}} \!\!{\rm sign}(\sigma)\,d\eta((\widehat{\bf T}_i)_1,B_{\sigma_1})\cdot\ldots\cdot
 d\eta((\widehat{\bf T}_i)_{q-1}, B_{\sigma_{q-1}}),
\end{equation*}
where $S^j_{q-1}$ denotes the group of all permutations of the set $\{1,\ldots,\hat j,\ldots,q\}$.
\end{remark}

\section{Variations of metric}

 Let $g=g_0\in{\rm Riem}(M^{2q+1},{\cal D},{\bf T})$ and $g_t\ (|t| < \eps)$ be an arbitrary one-parameter
family of metrics on $(M,{\cal D})$.
Note that the symmetric $(0,2)$-tensor $\dot g$
has only $(q+1)(2q+1)$ independent components.

A family $g_t$ preserving a metric on ${\cal D}$ is called $g^\pitchfork$-\textit{variation}: its tensor $\dot g$ has
$\frac32\,q(q+1)$ nonzero components $\{\dot g_{\,T_i,T_j}=\dot g_{\,T_j,T_i},\dot g_{\,T_i,N},\dot g_{\,T_i,B_k}\}$.
Variations $g_t$, with only $\frac12\,q(q+1)$ nonzero components $\{\dot g_{B_i,B_j}=\dot g_{B_j,B_i}\}$
preserve
both ${\cal D}$ and ${\cal D}^\bot$, and thus produce trivial Euler-Lagrange equations for~\eqref{E-gv-invar-g}.
If ${\cal D}$ is integrable then \eqref{E-gv-invar-g} is constant, hence  Euler-Lagrange equations are trivial.

An arbitrary $g^\pitchfork$-variation of a Riemannian metric $g$ can be decomposed into two cases:

\smallskip
(i)~$g$~varies along ${\cal D}^\bot$ only;\
(ii)~variations preserve $g$ on ${\cal D}$ and $\{T_i\}$ but disturb their orthogonality.

\smallskip
\noindent
Thus, we can divide all nonzero components of $\dot g$ into two sets:
$\{\dot g_{\,T_i,T_j}\}$ and $\{\dot g_{\,T_i,N},\dot g_{\,T_i,B_k}\}$.


\begin{theorem}
Let $T_i\ (1\le i\le q)$ be linear independent vector fields transverse to a codimension $q$ distribution ${\cal D}=\ker\omega$ on $M^{2q+1}\setminus\Sigma$. Then $g\in{\rm Riem}(M,{\cal D},{\bf T})$ is critical for $\gv_{\cal D}$ in \eqref{E-gv-invar-g}
with respect to all variations $g_t$ obeying \eqref{E-omega-T-t} and
\begin{subequations}
\begin{eqnarray}\label{E-cond2-Sigma}
\nonumber
 &&
 \int_M \|
 (\tr\dot B)\sum\nolimits_{i}(-1)^{i}(d\eta)^q(\widehat{\bf T}_i,N,{\bf B})\cdot T_i\\
 && +\,\dot\eta\wedge\eta\wedge(d\eta)^{q-1} \!+(q+1)(\tr\dot C)\,(d\eta)^q \|^p\,{\rm d} V_g <\infty ,\\
\label{E-cond2-Sigma-b}
 &&
 \int_M \|\iota_{\,\dot{\bf X}}\,d\omega\wedge\eta\wedge(d\eta)^{q-1} \|^p\,{\rm d} V_g <\infty
\end{eqnarray}
\end{subequations}
for some $p$ such that $(k-1)(p-1)\ge1$, if and only the following $q^2+q+1$ equations hold on $U$:
\begin{subequations}
\begin{eqnarray}
\label{E-EL-1}
 &&\hskip-8mm \Div\big(\sum\nolimits_{i}(-1)^{i}(d\eta)^q(\widehat{\bf T}_i,N,{\bf B})\cdot T_i\big)= 0,\\
\label{E-EL-2}
 &&\hskip-8mm (d\eta)^q(\widehat{\bf T}_i,N,{\bf B})\cdot\|H^\bot\|
 =-\sum\nolimits_{k}(-1)^{k} (d\eta)^q({\bf T},N,\widehat{\bf B}_k)\,d\omega(N,\widehat{\bf T}_i,B_k), \\
\label{E-EL-3}
 &&\hskip-9mm (d\eta)^q({\bf B},{\bf T})\,d\omega(B_j,\widehat{\bf T}_i,N)
 {=}-\!\!\sum\limits_{k}\!(-1)^{q+k} (d\eta)^q({\bf T},N, \widehat{\bf B}_k)d\omega(B_j,\widehat{\bf T}_i,B_k).
\end{eqnarray}
\end{subequations}
For integrable ${\cal D}$, equations $($\ref{E-EL-1}-c$)$ reduce to the expected trivial equalities.
\end{theorem}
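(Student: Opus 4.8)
The plan is to trade the variation of the metric for a variation of the transverse frame at the fixed compatible metric $g=g_0$, so that the two first-variation formulas of Lemma~\ref{C-02} apply directly. As recorded before the theorem, a $g^\pitchfork$-variation fixes $g$ on ${\cal D}$ and splits into the block $\{\dot g_{\,T_i,T_j}\}$, which rescales the Gram matrix of ${\bf T}$ inside ${\cal D}^\bot$, and the block $\{\dot g_{\,T_i,N},\dot g_{\,T_i,B_k}\}$, which tilts the $T_i$ off ${\cal D}^\bot$. Re-orthonormalising the fixed $\mathrm{span}(T_i)$ with respect to $g_t$ replaces $T_i$ by $\widetilde T_i=C^j_i(t)\,T_j$ with $C(t)=(g_t|_{{\cal D}^\bot})^{-1/2}$, $C(0)=\id_q$ and $\tr\dot C=-\tfrac12\sum_i\dot g_{\,T_i,T_i}$; this is exactly the setting of Lemma~\ref{C-02}(i). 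Restoring $g_t$-orthogonality to ${\cal D}$ replaces $T_i$ by $\widetilde T_i=T_i+X_i(t)$ with $X_i(t)\in\mathfrak X_{\cal D}$ and $X_i(0)=0$, the $N$- and $B_k$-components of $\dot X_i$ being read off from $\dot g_{\,T_i,N}$ and $\dot g_{\,T_i,B_k}$; this is Lemma~\ref{C-02}(ii). In both blocks \eqref{E-omega-T-t} holds, and to first order the metric variation is realised by these frame variations, so $\overset{\centerdot}\gv_{\cal D}$ is the sum of the two contributions computed by \eqref{eq:rel_gv1} and \eqref{eq:d_gv2}.

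For the first block I would feed $\tr\dot C$ into \eqref{eq:rel_gv1}. Its exact-form term integrates to zero by Lemma~\ref{L-sing1} under \eqref{E-cond2-Sigma}, leaving $\overset{\centerdot}\gv_{\cal D}$ proportional to $\int_M\langle\nabla(\tr\dot C),V\rangle\,{\rm d}V_g$ with $V=\sum_i(-1)^i(d\eta)^q(\widehat{\bf T}_i,N,{\bf B})\,T_i$. Integrating by parts (again via Lemma~\ref{L-sing1}) gives $-\int_M(\tr\dot C)\,\Div V\,{\rm d}V_g$, so by the fundamental lemma of the calculus of variations criticality in this block is equivalent to $\Div V=0$, that is \eqref{E-EL-1}. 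Since only $\tr\dot C$ enters \eqref{eq:rel_gv1}, the trace-free part of $\{\dot g_{\,T_i,T_j}\}$ imposes no condition, which is why this block contributes a single scalar equation.

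For the second block I would use \eqref{eq:d_gv2}; after discarding its exact term by Lemma~\ref{L-sing1} and \eqref{E-cond2-Sigma-b}, the integrand is linear and pointwise in $\dot{\bf X}=\sum_i(-1)^{i-1}\dot X_i\wedge\widehat{\bf T}_i$, hence in the independent scalars $\langle\dot X_i,N\rangle$ and $\langle\dot X_i,B_j\rangle$. Expanding $d\omega(\dot{\bf X},\,\cdot\,)=\sum_i(-1)^{i-1}d\omega(\dot X_i,\widehat{\bf T}_i,\,\cdot\,)$ and testing with $\dot X_i=N$ uses $d\omega({\bf T},N)=\eta(N)=(-1)^{q-1}\|H^\bot\|$ from \eqref{eq1:pw2013}: the first term of \eqref{eq:d_gv2} drops (a repeated $N$), the second produces the factor $\|H^\bot\|$ on the left of \eqref{E-EL-2} and the third produces its right-hand side, yielding the $q$ equations \eqref{E-EL-2}. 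Testing with $\dot X_i=B_j$, the second term now drops (since $d\omega(B_j,{\bf T})$ is a multiple of $\eta(B_j)=0$), while the first and third produce the two sides of \eqref{E-EL-3}, yielding the $q^2$ equations \eqref{E-EL-3}. Together with \eqref{E-EL-1} this is the announced $q^2+q+1$ system on $U$.

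I expect the sign-and-index bookkeeping of the second block to be the main obstacle: one must reorder $\dot X_i$ past $\widehat{\bf T}_i$ and $N$ past ${\bf B}$ and check that the coefficients of $\langle\dot X_i,N\rangle$ and $\langle\dot X_i,B_j\rangle$ are precisely the two sides of \eqref{E-EL-2} and \eqref{E-EL-3} with the stated signs $(-1)^k$ and $(-1)^{q+k}$; a secondary point is to justify cleanly that the metric variation and the frame variation have the same derivative at $t=0$, which follows from the naturality of $\eta,H^\bot,N,{\bf B}$ under the gauge transformation carrying $g_t$ to $g_0$. Finally, for integrable ${\cal D}$ I would invoke \eqref{E-deta-q}: since $(d\eta)^q=\omega\wedge\alpha$ and $\iota_{B_k}\omega=0$, every factor $(d\eta)^q(\cdots)$ and $d\omega(\cdots)$ occurring above collapses, so \eqref{E-EL-1}--\eqref{E-EL-3} reduce to the trivial identities consistent with $\gv_{\cal D}$ being constant on integrable~${\cal D}$.
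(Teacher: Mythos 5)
Your proposal is correct and follows essentially the same route as the paper: the metric variation is split into the Gram-matrix block and the tilting block, handled respectively by Lemma~\ref{C-02}(i) (with $\tr\dot C=-\tfrac12\sum_i\dot g_{\,T_i,T_i}$, integration by parts via Lemma~\ref{L-sing1} under \eqref{E-cond2-Sigma} giving \eqref{E-EL-1}) and Lemma~\ref{C-02}(ii) (collecting the coefficients of $\dot g_{\,T_i,N}$ and $\dot g_{\,T_i,B_j}$ — your ``testing'' with $\dot X_i=N$ and $\dot X_i=B_j$ is the same computation — giving \eqref{E-EL-2} and \eqref{E-EL-3}). Your sign checks, the count $q^2+q+1$, and the collapse in the integrable case via \eqref{E-deta-q} all match the paper's argument.
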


\begin{proof}
According to Lemma~\ref{C-02}, one should consider only two cases.

Case 1.
Let $\dot T_i=\dot C^j_i\,T_j$ and $\dot\omega=-\dot c\,\omega$, where $ c(t)=\det C(t)$.
Differentiating $g_t(T_i(t),T_j(t))=\delta_{ij}$ at $t=0$ we obtain
$\dot g_{ij}=-\dot C^j_i-\dot C^i_j$. Hence,
 $\sum\nolimits_i \dot g_{\,T_i,T_i} = -2\tr\dot C$.
 By Lemma~\ref{C-02}(i), and using
\[
  \Div(Q_i\tr\dot C\cdot T_i)=(\tr\dot C)\Div(Q_i\cdot T_i)+Q _iT_i(\tr\dot C)
\]
with $ Q_i = (-1)^{i}(d\eta)^q(\widehat{\bf T}_i,N,{\bf B})\,{\rm d}V_g $,
we have
\begin{eqnarray*}
 && ({\eta\wedge (d\eta)^q})^{\,\centerdot}  =
 -(q+1)(\tr\dot C)\Div\big(\sum\nolimits_{i}(-1)^{i}(d\eta)^q(\widehat{\bf T}_i,N,{\bf B})\cdot T_i\big)\,{\rm d}V_g \\
 && +\,\Div((\tr\dot C)\sum\nolimits_{i}(-1)^{i}(d\eta)^q(\widehat{\bf T}_i,N,{\bf B})\cdot T_i)\,{\rm d}V_g\\
 && +d\big(\dot\eta\wedge\eta\wedge(d\eta)^{q-1} +(q+1)\,(\tr\dot C)\,(d\eta)^q\big) .
\end{eqnarray*}
By Stokes theorem and \eqref{E-cond2-Sigma}, the Euler-Lagrange equations read as
\eqref{E-EL-1}.

Case 2.
Now, let $T_i(t)=T_i+X_i(t)$ be the orthonormal frame of ${\cal D}_t^\bot$ with respect to $g_t$
for some vector fields $X_i(t)\in\mathfrak{X}_{\cal D}$ with $X_i(0)=0$.
Differentiating $g_t(T_i+X_i(t),N)=0$ at $t=0$ we obtain
\[
 \<\dot X_i,N\> =-\dot g_{\,T_i,N}.
\]
Similarly,
$\<\dot X_i,B_j\>=-\dot g_{\,T_i,B_j}$. Hence,
 $\dot X_i = -\dot g_{\,T_i,N} N -\sum\nolimits_j \dot g_{\,T_i,B_j} B_j$,
and
\begin{eqnarray*}
 d\omega(\dot{\bf X},T_i) \eq \sum\nolimits_{j}(-1)^{j}\big[d\omega(N,\widehat{\bf X}_j,T_i)\,\dot g_{\,T_j,N}
 +\sum\nolimits_{k} d\omega(B_k,\widehat{\bf X}_j,T_i)\,\dot g_{\,T_i,B_k}\big]\\
 \eq (-1)^{i}\big[d\omega(N,\widehat{\bf T}_i,T_i)\,\dot g_{\,T_i,N}
 +\sum\nolimits_{k} d\omega(B_k,\widehat{\bf T}_i,T_i)\,\dot g_{\,T_i,B_k}\big]\\
 \eq (-1)^{q}\big[d\omega(N,{\bf T})\,\dot g_{\,T_i,N}
 +\sum\nolimits_{k} d\omega(B_k,{\bf T})\,\dot g_{\,T_i,B_k}\big]\\
 \eq (-1)^{q-1}\,\|H^\bot\|\,\dot g_{\,T_i,N},\\
 d\omega(\dot{\bf X},N) \eq -\sum\nolimits_{k,i}(-1)^{i-1} d\omega(B_k,\widehat{\bf T}_i,N)\,\dot g_{\,T_i,B_k} ,\\
 d\omega(\dot{\bf X},B_k) \eq -\!\sum\nolimits_{i}(-1)^{i-1}\big( d\omega(N,\widehat{\bf T}_i, B_k)\,\dot g_{\,T_i,N}
 {+}\!\sum\nolimits_{j}d\omega(B_j,\widehat{\bf T}_i,B_k)\,\dot g_{\,T_i,B_j} \big).
\end{eqnarray*}
Here we used $\dot{\bf X} = \sum\nolimits_{j}(-1)^{j-1} \dot X_i\wedge \widehat{\bf T}_i$ and $d\omega(N,{\bf T})=-\|H^\bot\|$.
Then, by Lemma~\ref{C-02}(ii), we have
\begin{eqnarray*}
 &&\hskip3mm ({\eta\wedge (d\eta)^q})^{\,\centerdot}  = (q+1)\big(
 -\sum\nolimits_{i,j}(-1)^{i-1}(d\eta)^q({\bf B},{\bf T})\,d\omega(B_j,\widehat{\bf T}_i,N)\,\dot g_{\,T_i,B_j} \\
 && +\sum\nolimits_{i}(-1)^{q+i-2}\|H^\bot\|\,(d\eta)^q(\widehat{\bf T}_i,N,{\bf B})\,\dot g_{\,T_i,N} \\
 && -\sum\nolimits_{k}(-1)^{q+k+i-1}\big( d\omega(N,\widehat{\bf T}_i,B_k)\,\dot g_{\,T_i,N} \\
 && +\sum\nolimits_{j} d\omega(B_j,\widehat{\bf T}_i,B_k)\,\dot g_{\,T_i,B_j}\big)(d\eta)^q({\bf T},N,\widehat{\bf B}_k)\big)\,{\rm d}V_g
 -d(\eta\wedge(d\eta)^{q-1}\!\wedge\iota_{\,\dot{\bf X}}\,d\omega).
\end{eqnarray*}
By Stokes theorem and \eqref{E-cond2-Sigma-b}, the
vanishing of $\dot g_{\,T_i,N},\dot g_{\,T_i,B_k}$ components provides
(\ref{E-EL-2},c).
\end{proof}

\begin{remark}\rm
For $q=1$, (\ref{E-EL-1}-c) reduce to the following system of equations on $U$, see \cite[Theorem~4.2]{rw-gv1}:
\begin{subequations}
\begin{eqnarray}\label{E-EL1-1}
 \Div( \Div( {\mathcal T}_{N,B}\cdot T)\cdot T)
  \eq 0,\\
\label{E-EL1-2}
  \Div( {\mathcal T}_{N,B}\cdot T) - (T(\log k) - h_{N,N}){\mathcal T}_{N,B} \eq  0,\\
\label{E-EL1-3}
  (\tau - h_{B,N})\,{\mathcal T}_{N,B} \eq  0 .
\end{eqnarray}
\end{subequations}
\end{remark}


\begin{corollary}
Let $g\in{\rm Riem}(M,{\cal D},{\bf T})$ and either $(d\eta)^q=0$ or the normal distribution ${\cal D}^\bot$ be harmonic. 
Then $g$ is a critical point for $\gv_{\cal D}$
with respect to all variations of metric obeying \eqref{E-omega-T-t} and {\rm(\ref{E-cond2-Sigma},b)}.
\end{corollary}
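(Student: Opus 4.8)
The plan is to lean entirely on the preceding theorem, whose statement is a biconditional: $g$ is critical for $\gv_{\cal D}$ among variations obeying \eqref{E-omega-T-t} and {\rm(\ref{E-cond2-Sigma},b)} precisely when the Euler--Lagrange equations \eqref{E-EL-1}, \eqref{E-EL-2} and \eqref{E-EL-3} hold on $U$. Thus no new variational computation is required; I would only verify that these $q^2+q+1$ equations are satisfied under either hypothesis.

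The first step is to observe that both hypotheses reduce to the single condition $(d\eta)^q=0$. The assumption $(d\eta)^q=0$ is of course this condition itself. If instead ${\cal D}^\bot$ is harmonic, then $H^\bot=0$, and Lemma~\ref{L-02a}, namely \eqref{eq1:pw2013}, gives $\eta=(-1)^{q-1}(H^\bot)^\flat=0$; hence $d\eta=0$ and therefore $(d\eta)^q=0$ on all of $M$. I prefer to phrase the harmonic case through $\eta=0$ rather than through the open set $U=\{H^\bot\ne0\}$, since when $H^\bot$ vanishes identically the frame $N,{\bf B}$ is no longer defined and $U$ is empty; the identity $(d\eta)^q=0$ sidesteps this.

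The second step is pure bookkeeping. Every term appearing in \eqref{E-EL-1}, \eqref{E-EL-2} and \eqref{E-EL-3} carries a factor which is $(d\eta)^q$ evaluated on a full $2q$-tuple of frame vectors: one of $(d\eta)^q(\widehat{\bf T}_i,N,{\bf B})$, $(d\eta)^q({\bf T},N,\widehat{\bf B}_k)$ or $(d\eta)^q({\bf B},{\bf T})$. Each such contraction evaluates the $2q$-form $(d\eta)^q$ on $2q$ arguments, so once $(d\eta)^q=0$ all of them vanish identically. In particular the vector field inside the divergence in \eqref{E-EL-1} is the zero field, so \eqref{E-EL-1} holds, while both sides of \eqref{E-EL-2} and \eqref{E-EL-3} collapse to $0=0$. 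The direction of the preceding theorem asserting that these equations imply criticality then gives the claim.

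I do not expect any genuine obstacle: the substantive work is already carried by the preceding theorem, and the only point deserving care is the harmonic case, where one passes from $H^\bot=0$ to $(d\eta)^q=0$ via Lemma~\ref{L-02a} instead of reasoning on the empty set $U$. Once $\eta=0$ the integrand defining $\gv_{\cal D}$ vanishes identically, so the criticality of $g$ is in fact immediate.
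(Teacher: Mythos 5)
Your proof is correct and follows essentially the same route as the paper: both verify that the Euler--Lagrange equations (\ref{E-EL-1}-c) of the preceding theorem hold trivially, using \eqref{eq1:pw2013} to get $\eta=0$, hence $d\eta=0$, in the harmonic case. Your reduction of the harmonic case to $(d\eta)^q=0$ (rather than treating it as a separate trivial case, as the paper does) is a cosmetic difference only.
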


\begin{proof}
If $(d\eta)^q=0$ then  (\ref{E-EL-1}-c) hold, hence $g$ is critical.
If $H^\bot=0$, then $\eta=0$, $d\eta = 0$, see \eqref{eq1:pw2013}, and (\ref{E-EL-1}-c) and (\ref{E-cond2-Sigma},b) are satisfied trivially.
\end{proof}

\baselineskip=13pt


\begin{thebibliography}{}

\bibitem{as2015}
T. Asuke, {Transverse projective structures of foliations and infinitesimal derivatives of the Godbillon--Vey class},
Int. J. of Math. 26, No. 4, 2015 (29 pages).

\bibitem{as2010}
T. Asuke, \textit{Godbillon--Vey class of transversely holomorphic foliations}, MSJ Memories, 24, 2010.

\bibitem{b2010}
D. Blair, \textit{Riemannian geometry of contact and symplectic manifolds}, Springer, 2010.

\bibitem{br-w}
F. Brito and P. Walczak, {On the energy of unit vector fields with isolated singularities}, Ann. Polon. Math. 73 (2000), 269--274.

\bibitem{cc}
A. Candel and L. Conlon, \textit{Foliations I and II}, Amer. Math. Soc. 2000 and 2003.

\bibitem{c-by}
B.Y. Chen, \textit{Geometry of submanifolds and its applications}, Science Univ. of Tokyo, 1981.

\bibitem{emr-1998}
A. Echeverr\'{i}a-Enr\'{i}quez, M.C. Munoz-Lecanda and N. Roman-Roy, {Multivector fields and connections: setting
Lagrangian equations in field theories}, J. Math. Phys. 39 (9) (1998) 4578--4603.

\bibitem{fo-2003}
M. Forger, C. Paufler and H. R\"{o}mer, {The Poisson bracket for Poisson forms in multisymplectic field theory},
Rev. Math. Phys. 15 (2003), no. 7, 705--743.

\bibitem{fh}
P. Foulon and B. Hasselblatt, {Godbillon--Vey invariants for maximal isotropic $C^2$-foliations},
Adv. Studies in Pure Mathematics, 72 (2017), 349--366.

\bibitem{gp}
H. Geiges and J.G. P\'{e}rez, {Transversely holomorphic flows and contact circles on spherical 3-manifolds},
Enseign. Math. 62 (2016), no. 3--4, 527--567.

\bibitem{glw}
E. Ghys, R. Langevin and P. Walczak, {Entropie g\'eom\'etrique des feuilletages}, Acta Math. 160 (1988), 105--142.

\bibitem{gl}
H. Gluck, {Dynamical behavior of geodesic fields}, 190--215. In ``Global Theory of Dynamical Systems", LNM, vol. 819, Springer, 1980.

\bibitem{gv}
C. Godbillon and J. Vey, {Un invariant des feuilletages de codimension 1},
C. R. Acad. Sci. Paris, Comptes Rendus, s{\'e}r. A, 273 (1971), 92--95.

\bibitem{hu2002}
S. Hurder, {Dynamics of the Godbillon--Vey class: a history and survey}, in
\textit{Foliations: Geometry and Dynamics} (Warsaw 2000), World. Sci. Publ. 2002, 29--60.

\bibitem{hu2005}
S. Hurder, {Problem set}, in \textit{Foliations 2005}, World Sci. Publ. 2006, 441--475.

\bibitem{rw73}
B.L. Reinhart and J.W. Wood, {A metric formula for the Godbillon--Vey invariant for foliations}, Proc. Amer. Math. Soc., 38, No. 2 (1973), 427--430.

\bibitem{rw-1}
V. Rovenski and P. Walczak, {Integral formulae on foliated symmetric spaces},
Math. Ann. 352(1), (2012), 223--237.

\bibitem{rw-gv1}
V. Rovenski and P. Walczak, {Variations of the Godbillon--Vey invariant of foliated 3-manifolds},
 Complex Analysis and Operator Theory, 2018, https://doi.org/10.1007/s11785-018-0871-9.

\bibitem{su2}
D. Sullivan, {A homological characterization of foliations consisting of minimal surfaces}, Comm. Math. Helv.  {54} (1979), 218--223.

\bibitem{tamura}
I. Tamura, \textit{Topology of Foliations}, Iwananmi Shoten, 1076
(English transl.: AMS, 1992).

\bibitem{walczak}
P. Walczak, \textit{Dynamics of Foliations, Groups and Pseudogroups}, Birkh{\"a}user, 2004.

 \bibitem{wa2013}
P. Walczak, {Tautness and the Godbillon--Vey class of foliations}. In Proc. ``Foliations 2012'', 205--213, World Sci. Publ., 2013.

\bibitem{pw1}
P. Walczak, {Integral formulae for foliations with singularities}, Coll. Math. {150}, (2017), 141--148.

\bibitem{wpah}
G.M. Webb, A. Prasad, S.C. Anco and Q. Hu, Godbillon-Vey helicity and magnetic helicity in Magnetohydrodynamics, arXiv:\,1909.0729\,[astro-ph.SR], 2019, 42 pp.

\end{thebibliography}
\end{document}